\newcommand{\la}{\lambda}
\theoremstyle{plain}
\numberwithin{equation}{section}
\newtheorem{thm}{Theorem}[section]
\newtheorem{lem}[thm]{Lemma}
\newtheorem{prop}[thm]{Proposition}
\newtheorem{cor}[thm]{Corollary}
\theoremstyle{definition}
\newtheorem{alg}[thm]{Algorithm}
\newtheorem{ip}[thm]{Inverse Problem}
\theoremstyle{remark}
\newtheorem{remark}[thm]{Remark}
\begin{document}
\begin{center}
{\large\bf Solving the inverse Sturm-Liouville problem with singular potential\\[0.2cm] and with polynomials in the boundary conditions}
\\[0.2cm]
{\bf Egor E. Chitorkin*, Natalia P. Bondarenko} \\[0.2cm]
\end{center}

\vspace{0.5cm}

{\bf Abstract.}  In this paper, we for the first time get constructive solution for the inverse Sturm-Liouville problem with complex-valued singular potential and with polynomials of the spectral parameter in the boundary conditions. 
The uniqueness of recovering the potential and the polynomials from the Weyl function is proved. An algorithm of solving the inverse problem is obtained and justified.
More concretely, we reduce the nonlinear inverse problem to a linear equation in the Banach space of bounded infinite sequences and then derive reconstruction formulas for the problem coefficients, which are new even for the case of regular potential.
Note that the spectral problem in this paper is investigated in the general non-self-adjoint form, and our method does not require the simplicity of the spectrum. In the future, our results can be applied to investigation of the inverse problem solvability and stability as well as to development of numerical methods for the reconstruction.

\medskip

{\bf Keywords:} inverse spectral problems; Sturm-Liouville operator; polynomials in the boundary conditions; singular potential; uniqueness theorem; constructive algorithm.

\medskip

{\bf AMS Mathematics Subject Classification (2020):} 34A55 34B07 34B09 34B24 34L40    

\vspace{1cm}

\section{Introduction}

In this paper, we consider the Sturm-Liouville differential expression $\mathit{l}y = -y'' + q(x)y$ with a singular complex-valued potential $q \in W^{-1}_2(0, \pi)$. This means that $q = \sigma'$, where $\sigma \in L_2(0, \pi)$, and the derivative is understood in the sense of distributions. Then, the differential expression $\mathit{l}y$ can be represented in the following form:
\begin{gather*}
\mathit{l}y = -(y^{[1]})' - \sigma(x)y^{[1]} - \sigma^2(x)y,
\end{gather*}
where $y^{[1]}(x) := y'(x) - \sigma(x)y(x)$ is the so-called $\it{quasi}$-$\it{derivative}$.

We study the inverse spectral problem for the Sturm-Liouville equation
\begin{equation} \label{eqv}
l y = \lambda y, \quad x \in (0, \pi),
\end{equation}
with the boundary conditions (BCs)
\begin{equation} \label{bc}
p_1(\la)y^{[1]}(0) - p_2(\la)y(0) = 0, \quad r_1(\la)y^{[1]}(\pi) + r_2(\la)y(\pi) = 0,
\end{equation}
where $\la$ is the spectral parameter and $p_1(\la)$, $p_2(\la)$, $r_1(\la)$, and $r_2(\la)$ are polynomials.

Inverse problems of spectral analysis consist in the recovery of differential operator coefficients from spectral information. Such problems have applications in quantum mechanics, geophysics, chemistry, electronics, and other branches of science and engineering. Inverse spectral theory for the Sturm-Liouville operators with \textit{constant} coefficients in the BCs has been developed fairly completely (see the monographs \cite{Mar77, Lev84, PT87, FY01}). There is also a number of studies concerning eigenvalue problems with \textit{polynomial} dependence on the spectral parameter in the BCs. The latter problems arise in various physical applications, e.g., in mechanical engineering, diffusion, and electric circuit problems (see \cite{Ful77, Ful80} and references therein). 

\textit{Inverse} Sturm-Liouville problems with polynomials in the BCs have been studied in \cite{Chu01, BindBr021, BindBr022, BindBr04, ChFr, FrYu, FrYu12, Wang12, YangXu15, YangWei18, Gul19, Gul20-ann, Gul20-ams, Gul23, Chit, PST23} and other papers. The majority of the studies in this direction deal with self-adjoint problems containing rational Herglotz-Nevanlinna functions of the spectral parameter in the BCs (see, e.g., \cite{BindBr021, BindBr022, YangWei18, Gul19, Gul20-ann, Gul20-ams, Gul23}). It is easy to check that the BCs of that type can be reduced to the form with polynomial dependence on the spectral parameter. 

A constructive solution of the inverse Sturm-Liouville problem on a finite interval with the polynomial BCs in the general non-self-adjoint form has been obtained by Freiling and Yurko \cite{FrYu} by using the method of spectral mappings. It is a universal method which allows to reduce various types of inverse spectral problems to linear equations in suitable Banach spaces. By using this method, a number of results were obtained for higher-order differential operators, differential systems, quadratic Sturm-Liouville pencils, differential operators on graphs, etc. (see \cite{Yur02}). However, the application of the method of spectral mappings to different operator classes has its own features and so requires a separate investigation. In particular, an inverse problem with polynomials in the BC on the half-line was studied in \cite{FrYu12}. We point out that the results of Freiling and Yurko \cite{FrYu, FrYu12} are concerned only with regular (integrable) potentials.

In recent years, spectral analysis of differential operators with singular (distributional) coefficients has attracted much attention of mathematicians (see \cite{Gul19, SavShkal, SavShkal03, Hry03, Hry04, Sav05, Sav08, Sav10, Mirz, MirzShkal, Kon, Hry,  FrIgYu, Dj, BondTamkang}). Properties of spectral characteristics and solutions of differential equations with singular coefficients were studied in such papers as \cite{SavShkal, SavShkal03, Hry03, Hry04, Sav05, Sav08, Sav10, Mirz, MirzShkal, Kon}. The most complete results in the inverse problem theory have been obtained for the Sturm-Liouville operators with singular potentials (see \cite{Hry03, Hry04, Hry, Sav05, Sav08, Sav10, FrIgYu, Dj, BondTamkang}). In particular, Hryniv and Mykytyuk \cite{Hry03, Hry04, Hry} generalized a number of classical results to the case of $W_2^{-1}$-potentials, by using the transformation operator method. Savchuk and Shkalikov \cite{Sav10} proved the uniform stability of the inverse Sturm-Liouville problems with potentials of $W_2^{\alpha}$, where $\alpha > -1$. The method of spectral mappings has been transferred to the Sturm-Liouville operators with potentials of the class $W_2^{-1}$ in \cite{FrIgYu} and \cite{BondTamkang}.
Note that this class of distributions contains the Dirac $\delta$-function and the Coulomb potential $\frac{1}{x}$, which are used for modeling particle interactions in quantum mechanics (see \cite{Alb05}). There were also studies of inverse problems with singular potentials for the quadratic Sturm-Liouville pencils (see \cite{Hry12, Pr13, Hry20, BondGaidel, Kuz23}) and for the matrix Sturm-Liouville operators (see \cite{Myk09, Eck15, BondAMP21}).

Recently, the spectral data characterization for the self-adjoint Sturm-Liouville operators with potentials of $W_2^{-1}(0,\pi)$ and with Herglotz-Nevanlinna functions of $\la$ in the BCs has been obtained by Guliyev \cite{Gul19}. The method of \cite{Gul19} is based on the Darboux transformations and requires the self-adjointness of the operators. The non-self-adjoint inverse Sturm-Liouville problems with singular potentials and with polynomials in the BCs, to the best of the authors' knowledge, have not been investigated before. This paper aims to fill this gap.

Thus, this paper is concerned with the development of the inverse spectral theory for the Sturm-Liouville problem \eqref{eqv}-\eqref{bc} with the polynomial dependence on $\la$ in the BCs and with singular potential of class $W_2^{-1}(0,\pi)$. We consider an inverse problem that consists in the recovery of the function $\sigma(x)$ and of the polynomials $r_j(\la)$, $j = 1, 2$, from the Weyl function. The polynomials $p_1(\la)$ and $p_2(\la)$ are supposed to be known a priori. We prove the uniqueness theorem for this inverse problem and develop a constructive algorithm for its solution. Namely, we reduce the nonlinear inverse problem to a linear equation in the Banach space of bounded infinite sequences and obtain reconstruction formulas for the problem coefficients $\sigma$, $r_1$, and $r_2$ in form of series. The validity of the reconstruction formulas is rigorously proved by using the approximation technique.
Note that we consider the problem in the general non-self-adjoint form, and our method does not require the simplicity of the spectrum. 

Our approach develops the method of spectral mappings and relies on the ideas of \cite{FrYu} on a constructive solution of the inverse Sturm-Liouville problem with polynomials in the BCs and with the regular potential $q \in L_2(0,\pi)$, of the studies \cite{But07, But13} for dealing with multiple eigenvalues and of the paper \cite{BondTamkang} concerning with singular potentials. Note that, comparing to the solution method of Freiling and Yurko \cite{FrYu}, our method is easier. At first, we reduce the inverse problem for \eqref{eqv}-\eqref{bc} to the problem with $p_1(\la) \equiv 1$, $p_2(\la) \equiv 0$, and this simplifies the technique of the next steps. Moreover, we obtain the reconstruction formulas, which are novel even for the case of regular $L_2$-potential. These formulas will be convenient for the future study of solvability and stability of the inverse problem. Furthermore, relying on our theoretical algorithm for solving the inverse problem, one can develop a numerical method of the reconstruction. We mention that a numerical solution of the simplest Sturm-Liouville problem with constant BC coefficients by the method of spectral mappings has been obtained by Ignatiev and Yurko~\cite{IgYu}. Some other numerical techniques for solving inverse Sturm-Liouville problems can be found in \cite{Ru92, Krav20}. It is also worth mentioning that, in recent years, inverse spectral problems are being actively studied for nonlocal operators (see, e.g., \cite{BB20, But23, WKS23} and references therein), but, for such operators, essentially different methods are needed.

The paper is organized as follows. In Section~\ref{sec:main}, the inverse problem statements and the main results are formulated. In Section~\ref{sec:uniq}, we prove the uniqueness theorem. In Section~\ref{sec:maineq}, we derive the main equation. In Section~\ref{sec:solution}, we prove a theorem on the reconstruction formulas. In Section~\ref{sec:solution_alg}, we obtain an algorithm for solving the inverse problem and give a simple example of recovering the potential and the polynomials. In Section~\ref{sec:reg}, we apply our results to the inverse problem with regular potential from \cite{FrYu}.

\section{Main results and methods} \label{sec:main}

Denote by $L = L(\sigma, p_1, p_2, r_1, r_2)$ the boundary value problem \eqref{eqv}-\eqref{bc}, where $\sigma(x)$ is a complex-valued function of $L_2(0,\pi)$, $p_1(\la)$ and $p_2(\la)$ are relatively prime polynomials, and so do $r_1(\la)$ and $r_2(\la)$.
Obviously, the polynomials of the boundary conditions can be represented in the form
\begin{gather} \label{polsp}
p_1(\la) = 	\sum \limits_{n=0}^{N_1} a_n \la^n, \quad  p_2(\la) = \sum \limits_{n=0}^{N_2} b_n \la^n, \quad N_1, N_2 \ge 0, \\ \label{polsr}
r_1(\la) = 	\sum \limits_{n=0}^{M_1} c_n \la^n, \quad  r_2(\la) = \sum \limits_{n=0}^{M_2} d_n \la^n, \quad M_1, M_2 \ge 0.
\end{gather}

Denote by $\psi(x, \la)$ the solution of equation \eqref{eqv} satisfying the initial conditions $\psi(\pi, \la) = r_1(\la)$, $\psi^{[1]}(\pi, \la) = -r_2(\la)$.
Let us introduce the Weyl function of the problem $L$ as follows:
\begin{gather} \label{defM1}
M^1(\la) := -\frac{\psi(0,\la)}{p_1(\la) \psi^{[1]}(0,\la) - p_2(\la) \psi(0,\la)}.
\end{gather}

Note that the functions $\psi(x, \la)$ and $\psi^{[1]}(x, \la)$ are entire in $\la$ for each fixed $x \in [0,\pi]$, so $M^1(\la)$ is meromorphic in $\la$ with a countable set of poles at the zeros of the denominator in \eqref{defM1}.

Consider the following inverse problem.

\begin{ip} \label{ip:main}
	Given the Weyl function $M^1(\la)$ and the polynomials $p_1(\la)$, $p_2(\la)$, find $q(x)$, $r_1(\la)$, and $r_2(\la)$.
\end{ip}

Note that Inverse Problem~\ref{ip:main} can be reduced to the problem without the polynomials $p_1(\la)$ and $p_2(\la)$. Indeed, let us introduce the boundary value problem $\mathcal{L} = \mathcal{L}(\sigma, r_1, r_2)$:
\begin{gather*}
\mathit{l}y(x) = \la y(x), \quad x \in (0, \pi), \\
y^{[1]}(0) = 0, \quad r_1(\la)y^{[1]}(\pi) + r_2(\la)y(\pi) = 0.
\end{gather*}

The corresponding Weyl function has the form 
\begin{equation} \label{defM}
M(\la) = -\dfrac{\psi(0,\la)}{\psi^{[1]}(0,\la)}.
\end{equation}

\begin{ip} \label{ip:main2}
	Given the Weyl function $M(\la)$, find $q(x)$, $r_1(\la)$, and $r_2(\la)$.
\end{ip}

It follows from \eqref{defM1} and \eqref{defM} that
\begin{gather} \label{pass}
M(\la) = \frac{p_1(\la)M^1(\la)}{1+p_2(\la)M^1(\la)}.
\end{gather}

Although the denominator of \eqref{pass} can have a countable set of zeros, the function $M(\la)$ is uniquely specified by \eqref{pass} in the other points of the complex plane. So, $M(\la)$ can be analytically continued to all $\la$ different from its poles.
Hence, without loss of generality we can pass from Inverse Problem~\ref{ip:main} to Inverse Problem~\ref{ip:main2}. Therefore, below we focus on investigation of Inverse Problem~\ref{ip:main2}.

Let us normalize the polynomials $r_1(\la)$ and $r_2(\la)$. If $\deg (r_1) \ge \deg(r_2)$, then without loss of generality we assume that $c_{M_1} = 1$ and $M_1 = M_2$ in \eqref{polsr}. The coefficient $d_{M_2}$ can be equal to zero (so we have the polynomial $r_2(\la)$ of lesser degree). If $\deg(r_1) < \deg(r_2)$, then without loss of generality we assume that $d_{M_2} = 1$ and $M_1 = M_2 - 1$. In this case, $c_{M_1}$ can be equal to zero (so we have the polynomial $r_1(\la)$ of lesser degree). We will write that $(r_1, r_2) \in \mathcal N$ if the polynomials $r_1(\la)$ and $r_2(\la)$ fulfill these conditions. 

Along with the problem $\mathcal{L} = \mathcal{L}(\sigma, r_1, r_2)$, we consider another problem $\tilde {\mathcal{L}} = \mathcal{L}(\tilde \sigma, \tilde r_1, \tilde r_2)$ of the same form as $\mathcal{L}$, but with different potential $\tilde q$ and polynomials $\tilde r_1$, $\tilde r_2$. We agree that, if a certain symbol $\gamma$ denotes an object related to $\mathcal{L}$, then the symbol $\tilde \gamma$ with tilde will denote the analogous object related to $\tilde {\mathcal{L}}$. Note that the quasi-derivatives $y^{[1]} = y' - \sigma(x) y$ and $y^{[1]} = y' - \tilde \sigma(x) y$ corresponding to the problems $\mathcal L$ and $\tilde {\mathcal L}$, respectively, differ from each other. In addition, we assume that $(r_1, r_2) \in \mathcal N$ and $(\tilde r_1, \tilde r_2) \in \mathcal N$.

The first result of this paper is the following uniqueness theorem for Inverse Problem~\ref{ip:main2}.

\begin{thm} \label{thm:uniq}
	If $M(\la) = \tilde M(\la)$, then $\sigma(x) = \tilde\sigma(x)$ a.e. on $(0,\pi)$ and $r_j(\la) \equiv \tilde r_j(\la)$, $j=\overline{1, 2}$.
\end{thm}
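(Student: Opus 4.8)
The plan is to apply the method of spectral mappings in its transition-matrix form: the hypothesis $M(\la)=\tilde M(\la)$ will be used to cancel the (meromorphic) Weyl function and leave behind entire functions of $\la$, whose boundedness can then be read off from the standard asymptotics of the solutions. First I would fix the fundamental system $C(x,\la),S(x,\la)$ of \eqref{eqv} with Cauchy data $C(0,\la)=1$, $C^{[1]}(0,\la)=0$, $S(0,\la)=0$, $S^{[1]}(0,\la)=1$, and introduce the Weyl solution $\Phi(x,\la):=M(\la)C(x,\la)-S(x,\la)$, which satisfies $\Phi(0,\la)=M(\la)$, $\Phi^{[1]}(0,\la)=-1$, the right-hand BC of $\mathcal L$, and the constant quasi-Wronskian relation $C\Phi^{[1]}-C^{[1]}\Phi\equiv-1$. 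With the tilded objects defined analogously, I would form the transition matrix
\[
P(x,\la)=\begin{pmatrix} C & \Phi \\ C^{[1]} & \Phi^{[1]} \end{pmatrix}\begin{pmatrix} \tilde C & \tilde\Phi \\ \tilde C^{[1]} & \tilde\Phi^{[1]} \end{pmatrix}^{-1}.
\]
Using $\tilde C\tilde\Phi^{[1]}-\tilde C^{[1]}\tilde\Phi\equiv-1$, its top row is $P_{11}=\Phi\tilde C^{[1]}-C\tilde\Phi^{[1]}$ and $P_{12}=C\tilde\Phi-\Phi\tilde C$. Substituting $\Phi=M C-S$ and $\tilde\Phi=M\tilde C-\tilde S$ (here the hypothesis $M=\tilde M$ enters), the $M$-terms cancel and I obtain the purely entire expressions
\[
P_{11}(x,\la)=C\tilde S^{[1]}-S\tilde C^{[1]}, \qquad P_{12}(x,\la)=S\tilde C-C\tilde S.
\]

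Next I would invoke the standard asymptotics of $C,S$ and their quasi-derivatives for $W_2^{-1}$-potentials (cf. \cite{BondTamkang,FrIgYu}): with $\la=\rho^2$ one has $C\sim\cos\rho x$, $S^{[1]}\sim\cos\rho x$, $S\sim\rho^{-1}\sin\rho x$, $C^{[1]}\sim-\rho\sin\rho x$. The leading exponential terms cancel in the two combinations above, so that $P_{11}(x,\la)\to1$ and $P_{12}(x,\la)\to0$ as $|\la|\to\infty$; since $P_{11},P_{12}$ are entire and bounded on $\mathbb C$, Liouville's theorem gives $P_{11}\equiv1$ and $P_{12}\equiv0$. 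Reading off the top row of the identity $\mathcal M=P\tilde{\mathcal M}$ then yields $C\equiv\tilde C$ (and $\Phi\equiv\tilde\Phi$), while $P_{12}\equiv0$ combined with $C\equiv\tilde C$ forces $\tilde C(S-\tilde S)\equiv0$, hence $S\equiv\tilde S$.

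To recover $\sigma$, I would differentiate $C\equiv\tilde C$ and use the first-order system for $(y,y^{[1]})$ to get $C^{[1]}-\tilde C^{[1]}=(\tilde\sigma-\sigma)C$ and then, by a short computation, $(\tilde\sigma-\sigma)'C\equiv0$; since $C(\,\cdot\,,\la)$ is not identically zero, $\tilde\sigma-\sigma$ is constant, and evaluating the continuous function $C^{[1]}-\tilde C^{[1]}=(\tilde\sigma-\sigma)C$ at $x=0$, where $C^{[1]}(0,\la)=\tilde C^{[1]}(0,\la)=0$ and $C(0,\la)=1$, forces this constant to vanish, so $\sigma=\tilde\sigma$ a.e. To recover the polynomials, I would express $\psi$ through the fundamental system, obtaining $\psi(0,\la)=r_1 S^{[1]}(\pi,\la)+r_2 S(\pi,\la)$ and $\psi^{[1]}(0,\la)=-r_1 C^{[1]}(\pi,\la)-r_2 C(\pi,\la)$, so that $M(\la)$ is the corresponding ratio. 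Since $M=\tilde M$ and all solution values at $x=\pi$ coincide, clearing denominators and using the Wronskian at $\pi$ collapses the identity to $r_1\tilde r_2-r_2\tilde r_1\equiv0$; the coprimality of $(r_1,r_2)$ and $(\tilde r_1,\tilde r_2)$ together with the normalization $(r_1,r_2),(\tilde r_1,\tilde r_2)\in\mathcal N$ then gives $r_j\equiv\tilde r_j$, $j=\overline{1,2}$.

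The hard part will be the boundedness of $P_{11}$ and $P_{12}$ on the whole plane $\mathbb C$: one must show that the exponential factors cancel in the combinations \emph{uniformly} in $\arg\la$, which for singular $W_2^{-1}$-potentials requires careful asymptotics of the solutions and, crucially, of the quasi-derivatives, sharper than in the classical smooth case.
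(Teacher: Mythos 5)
Your overall architecture coincides with the paper's proof: the same matrix of spectral mappings, the same use of $M=\tilde M$ to cancel the Weyl function and obtain the entire entries $P_{11}=C\tilde S^{[1]}-S\tilde C^{[1]}$, $P_{12}=S\tilde C-C\tilde S$ (the paper writes $\varphi$ for your $C$), Liouville's theorem, and then the identical endgame: $((\sigma-\tilde\sigma)\varphi)'=(\sigma-\tilde\sigma)\varphi'$ forces $\sigma-\tilde\sigma$ to be an absolutely continuous constant, which vanishes at $x=0$ because $\varphi^{[1]}(0,\la)=\tilde\varphi^{[1]}(0,\la)=0$; and $\Phi\equiv\tilde\Phi$ with the right boundary condition gives $r_1\tilde r_2-r_2\tilde r_1\equiv 0$, settled by coprimality and the normalization $\mathcal N$. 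These parts are correct and match the paper step for step.

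The gap is the step you yourself flag as ``the hard part,'' and as proposed it does not merely need care---it fails. You plan to read the boundedness of $P_{11},P_{12}$ off the asymptotics of $C,S$ inside the entire combinations. For $W_2^{-1}$-potentials, Proposition~\ref{prop1} gives remainders that are only transform terms of size $o(e^{|\tau|x})$ (not $O(|\rho|^{-1}e^{|\tau|x})$ as in the smooth case), so after the $\cos^2\rho x+\sin^2\rho x=1$ cancellation of the leading terms, the cross terms in $C\tilde S^{[1]}-S\tilde C^{[1]}$ are $o(e^{2|\tau|x})$, which blows up as $|\mathrm{Im}\,\rho|\to\infty$ along any nonreal ray: there is no uniform-in-$\arg\la$ cancellation to be had. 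A structural way to see this: the combinations $C\tilde S^{[1]}-S\tilde C^{[1]}$ and $S\tilde C-C\tilde S$ make no reference to the spectral data and satisfy $P_{11}\tilde C+P_{12}\tilde C^{[1]}=C$ for \emph{arbitrary} pairs $\sigma,\tilde\sigma$; if their boundedness followed from solution asymptotics alone, Liouville would give $C\equiv\tilde C$ for any two potentials, which is absurd. The hypothesis $M=\tilde M$ must enter the estimates, and in the paper it does so through the dual representation \eqref{relP} involving the Weyl solutions, combined with the lower bound \eqref{abs_delta}: since $\psi=O(|\rho|^{2M_1}e^{(\pi-x)|\tau|})$ while $|\Delta(\la)|\ge C_\delta|\rho|^{2M_1+1}e^{\pi|\tau|}$ on $G_\delta$, the Weyl solution decays, $\Phi=O(|\rho|^{-1}e^{-x|\tau|})$, $\Phi^{[1]}=O(e^{-x|\tau|})$, and one gets $P_{11}=1+o(1)$, $P_{12}=o(1)$ only for $\rho\in G_\delta$ (formula \eqref{p_as}); the bridge from $G_\delta$ to all of $\mathbb C$ is made by the maximum modulus principle before Liouville is applied. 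Note also that this route first requires matching the polynomial degrees, $M_j=\tilde M_j$---deduced in the paper from the coincidence of the poles of $M$ and $\tilde M$ and the asymptotics \eqref{la_asymp}---since the $|\rho|^{2M_1}$-growth of $\psi$ must be absorbed by the $|\rho|^{2M_1+1}$ lower bound for $\Delta$; your proposal omits this step entirely.
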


The proof of Theorem~\ref{thm:uniq} is based on the method of spectral mappings (see \cite{FY01, BondTamkang}). Obviously, Theorem~\ref{thm:uniq} implies the following corollary on the uniqueness of solution for Inverse Problem~\ref{ip:main2}.

\begin{cor}
	If $M^1(\la) = \tilde M^1(\la)$ and $p_j(\la) \equiv \tilde p_j(\la)$, $j=\overline{1, 2}$, then $\sigma(x) = \tilde\sigma(x)$ a.e. on $(0,\pi)$ and $r_j(\la) \equiv \tilde r_j(\la)$, $j=\overline{1, 2}$.
\end{cor}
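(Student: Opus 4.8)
The plan is to deduce this corollary directly from Theorem~\ref{thm:uniq} by means of the transformation formula \eqref{pass}, which expresses the Weyl function $M(\la)$ of the reduced problem $\mathcal L$ solely in terms of the Weyl function $M^1(\la)$ of the original problem $L$ together with the polynomials $p_1(\la)$ and $p_2(\la)$. Thus the whole argument amounts to checking that the hypotheses of the corollary force $M(\la) = \tilde M(\la)$, after which Theorem~\ref{thm:uniq} applies verbatim.

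Concretely, I would write down the relation \eqref{pass} for both problems:
\[
M(\la) = \frac{p_1(\la)M^1(\la)}{1+p_2(\la)M^1(\la)}, \qquad \tilde M(\la) = \frac{\tilde p_1(\la)\tilde M^1(\la)}{1+\tilde p_2(\la)\tilde M^1(\la)}.
\]
Since the right-hand side of each identity depends only on the corresponding Weyl function $M^1$ (respectively $\tilde M^1$) and on the corresponding polynomials $p_1$, $p_2$ (respectively $\tilde p_1$, $\tilde p_2$), the hypotheses $M^1(\la) = \tilde M^1(\la)$ and $p_j(\la) \equiv \tilde p_j(\la)$, $j = \overline{1,2}$, make these two right-hand sides identical as functions of $\la$. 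Hence $M(\la) = \tilde M(\la)$, and it then remains only to invoke Theorem~\ref{thm:uniq}, whose single hypothesis has now been verified, to conclude that $\sigma(x) = \tilde\sigma(x)$ a.e. on $(0,\pi)$ and $r_j(\la) \equiv \tilde r_j(\la)$, $j = \overline{1,2}$.

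I do not expect any substantial obstacle here: the corollary is essentially a one-line consequence of the theorem, with the reduction \eqref{pass} doing all the work. The only point deserving a moment's care is the justification that the passage $M^1 \mapsto M$ is a legitimate reduction to Inverse Problem~\ref{ip:main2} — namely, that it loses no information about $\sigma$, $r_1$, $r_2$ — but this is precisely the content of the reduction established immediately before \eqref{pass}, so no new work is needed.
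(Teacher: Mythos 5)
Your proof is correct and matches the paper's own reasoning: the paper presents this corollary as an immediate consequence of Theorem~\ref{thm:uniq}, the point being exactly that relation \eqref{pass} determines $M(\la)$ from $M^1(\la)$, $p_1(\la)$, $p_2(\la)$, so equal data give $M(\la) = \tilde M(\la)$ and the theorem applies. Nothing further is needed.
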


For constructive solution of the inverse problem, it is more convenient to use discrete spectral data, than the Weyl function. The spectrum of the problem $\mathcal L$ is described by the following lemma.

\begin{lem} \label{eigen_asymp}
	The spectrum of the boundary value problem $\mathcal L$ is a countable set of eigenvalues which can be numbered as $\{\la_n\}_{n \ge 1}$ (counting with multiplicities) according to their asymptotic behavior:
	\begin{gather}\label{la_asymp}
	\rho_n = \sqrt{\la_n} = 
	\left\{\begin{array}{ll}
	n - M_1 - 1 + \varkappa_n, \quad & M_1 \ge M_2, \\
	n - M_2 - \dfrac{1}{2} + \varkappa_n, \quad &  M_1 < M_2,
	\end{array}\right.
	\end{gather}
	where $\{ \varkappa_n \} \in l_2$.
\end{lem}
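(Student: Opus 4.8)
The plan is to realize the eigenvalues of $\mathcal L$ as the zeros, counted with multiplicity, of an explicit entire characteristic function, and then to locate those zeros by comparison with the zeros of its leading term. First I would introduce the solution $C(x,\la)$ of \eqref{eqv} normalized by $C(0,\la)=1$, $C^{[1]}(0,\la)=0$; it automatically satisfies the left-hand condition $y^{[1]}(0)=0$, so $\la$ is an eigenvalue of $\mathcal L$ exactly when $C$ also fulfils the right-hand condition in \eqref{defM}. Expanding $\psi$ in the basis of the solutions normalized at $x=0$ and using that their Wronskian equals $1$, one checks by Cramer's rule that $\psi^{[1]}(0,\la)=-\Delta(\la)$, where
\[
\Delta(\la):=r_1(\la)\,C^{[1]}(\pi,\la)+r_2(\la)\,C(\pi,\la)
\]
is entire in $\la$. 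Hence the spectrum of $\mathcal L$ coincides with the zero set of $\Delta$.

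Next I would insert the asymptotics of the fundamental solutions for the $W_2^{-1}$-potential established in \cite{BondTamkang}. With $\rho=\sqrt\la$ and $\tau=\mathrm{Im}\,\rho$, these read $C(\pi,\la)=\cos\rho\pi+\rho^{-1}e^{|\tau|\pi}\varkappa_1(\rho)$ and $C^{[1]}(\pi,\la)=-\rho\sin\rho\pi+e^{|\tau|\pi}\varkappa_2(\rho)$, where the correction terms carry the $l_2$/Fourier structure in $\sigma$ that will eventually yield the $l_2$-remainder in \eqref{la_asymp}. I would then identify the dominant term of $\Delta$ in the two regimes of \eqref{polsr}. If $M_1\ge M_2$, the normalization $c_{M_1}=1$, $M_1=M_2$ gives $r_1(\la)\sim\rho^{2M_1}$; since the $C^{[1]}$-term carries one extra power of $\rho$, one obtains
\[
\Delta(\la)=-\rho^{2M_1+1}\sin\rho\pi\,(1+o(1))=-\pi\la^{M_1+1}\prod_{n\ge1}(1-\la/n^2)\,(1+o(1)).
\]
If $M_1<M_2$, the normalization $d_{M_2}=1$, $M_1=M_2-1$ makes the $C$-term dominant, and
\[
\Delta(\la)=\rho^{2M_2}\cos\rho\pi\,(1+o(1))=\la^{M_2}\prod_{n\ge1}\bigl(1-\la/(n-\tfrac12)^2\bigr)\,(1+o(1)).
\]

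Reading off the zeros of these leading products fixes both the location and the indexing in \eqref{la_asymp}. In the first case the factor $\la^{M_1+1}$ contributes a zero of order $M_1+1$ at the origin, so once the eigenvalues are numbered $\{\la_n\}_{n\ge1}$ the zero near $n^2$ carries index $n+M_1+1$, giving $\rho_n=n-M_1-1+\varkappa_n$; in the second case $\la^{M_2}$ shifts the index by $M_2$ and the zeros $(n-\tfrac12)^2$ of the cosine give $\rho_n=n-M_2-\tfrac12+\varkappa_n$. To make this rigorous and obtain $\varkappa_n=o(1)$ with a fixed count, I would apply Rouch\'e's theorem on the circles $|\rho|=N+\tfrac12$ (resp. $|\rho|=N$) and on small circles about the limit points, verifying $|\Delta-\Delta_0|<|\Delta_0|$ there. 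Substituting $\rho_n=(n-M_1-1)+\varkappa_n$ back into $\Delta(\la_n)=0$, using $\sin\rho_n\pi\approx\pm\pi\varkappa_n$, and equating with the $e^{|\tau|\pi}$-bounded, $l_2$-summable remainder terms then upgrades $o(1)$ to $\{\varkappa_n\}\in l_2$. The main obstacle is precisely this last point: one needs the asymptotics of $C(\pi,\la)$ and $C^{[1]}(\pi,\la)$ with remainders controlled not merely as $o(1/\rho)$ but with the sharp $l_2$ structure along the real axis, since the potential enters only through $\sigma\in L_2$; keeping track of the extra zeros near the origin so that the numbering in \eqref{la_asymp} is consistent with multiplicities is the other delicate bookkeeping step.
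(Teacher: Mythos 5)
Your overall route coincides with the paper's: you identify the spectrum with the zero set of the entire function $\Delta(\la)=r_1(\la)\varphi^{[1]}(\pi,\la)+r_2(\la)\varphi(\pi,\la)$ (your $C$ is the paper's $\varphi$, and indeed $\Delta(\la)=-\psi^{[1]}(0,\la)$, as one sees by evaluating \eqref{charfun} at $x=0$), isolate the leading term in the two regimes of the normalization $\mathcal N$, run a Rouch\'e comparison with $-\rho^{2M_1+1}\sin\rho\pi$ (resp.\ $\rho^{2M_2}\cos\rho\pi$) to get $\rho_n=n-M_1-1+\varepsilon_n$ with $\varepsilon_n=o(1)$ and the correct indexing (including the $M_1+1$ zeros at the origin), and then refine at $\rho=\rho_n$ using the Paley--Wiener structure of the remainder. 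This is the same two-step argument (Rouch\'e plus refinement) the paper uses.

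There is, however, one concrete error in your inputs: the asymptotics you attribute to \cite{BondTamkang} are the regular-potential ones and are false for $q=\sigma'$ with $\sigma\in L_2(0,\pi)$. Proposition~\ref{prop1} actually gives $\varphi(\pi,\la)=\cos\rho\pi+\varkappa_\pi(\rho)$ and $\varphi^{[1]}(\pi,\la)=-\rho\sin\rho\pi+\rho\,\varkappa_\pi(\rho)+O(1)$, so the corrections are of order $o(e^{\pi|\tau|})$ and $o(|\rho|e^{\pi|\tau|})$, not $O(\rho^{-1}e^{\pi|\tau|})$ and $O(e^{\pi|\tau|})$ as you wrote. Consequently the perturbation $\rho^{2M_1+1}\varkappa_\pi(\rho)$ in $\Delta$ carries the \emph{same} power of $\rho$ as the main term, and your factorization $\Delta=-\rho^{2M_1+1}\sin\rho\pi\,(1+o(1))$ cannot hold uniformly near the real axis (near the zeros of $\sin\rho\pi$ the correction dominates). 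This does not destroy the scheme: on the contours $|\rho|=N+\tfrac12$ one has $|\sin\rho\pi|\ge Ce^{\pi|\tau|}$ while $\varkappa_\pi(\rho)=o(e^{\pi|\tau|})$, so $|\Delta-\Delta^0|<|\Delta^0|$ there and the zero count with multiplicities goes through; and at $\rho=\rho_n$ the normalized equation reads $-\sin\rho_n\pi+\varkappa_\pi(\rho_n)+O(n^{-1})=0$, where expanding $\varkappa_\pi(m+\varepsilon_n)=\varkappa_n+o(\varepsilon_n)$ with $\{\varkappa_n\}\in l_2$ (Fourier coefficients of the Paley--Wiener density) yields $\{\varepsilon_n\}\in l_2$ --- exactly the paper's computation. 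Note that if your displayed $O(\rho^{-1})$-type remainders were true, you would conclude $\varepsilon_n=O(1/n)$, which is strictly stronger than \eqref{la_asymp} and false for general singular $\sigma$; so the $l_2$ (rather than $O(1/n)$) remainder is not mere bookkeeping but the substantive feature of the $W_2^{-1}$ class, and the proof must quote Proposition~\ref{prop1} in its actual form rather than the regular-case estimates.
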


Note that the sequence $\{\la_n\}_{n \ge 1}$ can contain a finite number of multiple eigenvalues. Without loss of generality, we assume that multiple eigenvalues are consecutive, that is, $\la_k = \la_{k+1} = \dots = \la_{k+m_k-1}$, where $m_k$ is the multiplicity of $\la_k$. Put $I := \{1\}\cup\{n > 1 \colon \la_{n-1} \neq \la_n\}$.

Similarly to Lemma~2 in \cite{FrYu}, one can prove the following lemma.

\begin{lem} \label {weyl_func}
	The following representation holds:
	\begin{gather}\label{weyl_sum}
	M(\la) = \sum_{k \in I}^{} \sum_{j=0}^{m_k - 1} \frac{\alpha_{k+j}}{(\la - \la_k)^{j+1}},
	\end{gather}
	where $\alpha_{k+j}$ are the coefficients of the principal part of the Weyl function $M(\la)$ in a neighbourhood of the pole $\la_k$. The series \eqref{weyl_func} converges absolutely and uniformly on compact sets without eigenvalues of $\mathcal L$.
\end{lem}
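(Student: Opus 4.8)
The plan is to establish \eqref{weyl_sum} as a Mittag--Leffler decomposition of the meromorphic function $M(\la)$ into the sum of its principal parts, realized through a contour-integral argument. First I would note that, by the definition \eqref{defM}, $M(\la) = -\psi(0,\la)/\psi^{[1]}(0,\la)$ is a ratio of two entire functions of $\la$ (each of order $1/2$, since for fixed $x$ the solution $\psi$ and its quasi-derivative are entire in $\la$, and $r_1,r_2$ only contribute polynomial factors). Hence $M(\la)$ is meromorphic, and its poles lie at the zeros of the characteristic function $\Delta(\la):=\psi^{[1]}(0,\la)$, i.e.\ at the eigenvalues $\{\la_k\}_{k\in I}$ of $\mathcal L$. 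The order of the pole at $\la_k$ equals the multiplicity $m_k$, so in a punctured neighbourhood of $\la_k$ one has $M(\la) = \sum_{j=0}^{m_k-1}\alpha_{k+j}(\la-\la_k)^{-(j+1)} + (\text{holomorphic})$, which is exactly what identifies the numbers $\alpha_{k+j}$ as the coefficients of the principal part.

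Next I would introduce a sequence of expanding contours $\Gamma_N = \{\la : |\la| = R_N\}$ with $R_N \to \infty$, chosen so that each $\Gamma_N$ runs in the gaps between consecutive clusters of eigenvalues; the existence of such contours, separated from the spectrum by a uniform distance, is guaranteed by the eigenvalue asymptotics \eqref{la_asymp}. For $\la$ fixed inside $\Gamma_N$ and distinct from every eigenvalue, the residue theorem applied to $\mu \mapsto M(\mu)/(\mu-\la)$ yields
\[
\frac{1}{2\pi i}\oint_{\Gamma_N}\frac{M(\mu)}{\mu-\la}\,d\mu = M(\la) - \sum_{\la_k \in \mathrm{int}\,\Gamma_N}\sum_{j=0}^{m_k-1}\frac{\alpha_{k+j}}{(\la-\la_k)^{j+1}},
\]
because a direct expansion of $1/(\mu-\la)$ shows that the residue of $M(\mu)/(\mu-\la)$ at $\la_k$ equals $-\sum_{j=0}^{m_k-1}\alpha_{k+j}(\la-\la_k)^{-(j+1)}$, while the residue at $\mu=\la$ is $M(\la)$.

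The crux is to let $N\to\infty$ and show the contour integral vanishes, which reduces to a uniform bound of the form $|M(\la)| \le C|\la|^{-1/2}$ on $\Gamma_N$. To obtain it I would combine the asymptotic formulas for $\psi(x,\la)$ and $\psi^{[1]}(x,\la)$ of the singular problem (in the spirit of \cite{BondTamkang}) with the initial data $\psi(\pi,\la)=r_1(\la)$, $\psi^{[1]}(\pi,\la)=-r_2(\la)$: the leading term of $\psi^{[1]}(0,\la)$ carries an extra factor $\rho=\sqrt{\la}$ relative to $\psi(0,\la)$ (its dominant trigonometric factor being $\sin\rho\pi$ times the dominant polynomial among $r_1,r_2$), so that $M(\la)=O(\rho^{-1})$ on contours where $|\sin\rho\pi|$ is bounded below. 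Given this bound, $\left|\oint_{\Gamma_N}\frac{M(\mu)}{\mu-\la}\,d\mu\right| = O(R_N^{-1/2}) \to 0$, and passing to the limit produces \eqref{weyl_sum} with the summation over all $k\in I$. Absolute and uniform convergence on compact sets free of eigenvalues then follows from the asymptotics of $\la_k$ and $\alpha_{k+j}$, which dominate the tail of the series by a convergent comparison series. I expect the asymptotic analysis yielding the contour bound to be the main obstacle, since it must simultaneously absorb the $L_2$-regularity of $\sigma$ (which produces $l_2$-type remainders rather than classical $o(1)$ error terms) and the polynomial factors $r_1,r_2$ in the boundary data, all while ensuring that the denominator $\psi^{[1]}(0,\la)$ stays bounded away from zero along $\Gamma_N$.
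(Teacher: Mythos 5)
Your proposal is correct and is essentially the argument the paper has in mind: the paper proves this lemma only by reference to Lemma~2 of \cite{FrYu}, whose proof is exactly your contour-integration scheme --- residues of $M(\mu)/(\mu-\la)$ over expanding circles $\Gamma_N$ lying in $G_\delta$, with the vanishing of the contour integral coming from $M(\la)=O(\rho^{-1})$, which here follows from the estimate \eqref{abs_delta} on $\Delta(\la)=\mp\psi^{[1]}(0,\la)$ together with $\psi(0,\la)=O(|\rho|^{2M_1}e^{\pi|\tau|})$. Only a cosmetic caveat: in the case $M_1<M_2$ the dominant trigonometric factor of the denominator is $\cos\rho\pi$ rather than $\sin\rho\pi$ (cf.\ \eqref{asympt_charfun}), but your extra factor of $\rho$ and hence the $O(R_N^{-1/2})$ bound survive unchanged.
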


In view of the eigenvalue asymptotics \eqref{la_asymp}, the poles of $M(\la)$ for sufficiently large $|\la|$ are simple. Thus, in formula \eqref{weyl_sum}, $m_k = 1$ for sufficiently large values of $k$.

We call the numbers $\{\la_n, \alpha_n\}_{n \ge 1}$ \textit{the spectral data} of the problem $\mathcal L$. Due to the formula \eqref{weyl_sum}, the spectral data uniquely determine the Weyl function and vice versa. Thus, instead of Inverse Problem~\ref{ip:main2}, one can study the following problem.

\begin{ip}\label{ip:main3}
Given the spectral data $\{ \la_n, \alpha_n \}_{n \ge 1}$, find $\sigma(x)$, $r_1(\la)$, and $r_2(\la)$.
\end{ip}

For solving Inverse Problem~\ref{ip:main3}, we develop the ideas of the method of spectral mappings for the non-self-adjoint Sturm-Liouville operator (see \cite{FY01, BondTamkang, But07}). Let us briefly describe the solution. For definiteness, we consider the case $M_1 = M_2$. The other case is analogous. Along with the problem $\mathcal L = \mathcal L(\sigma, r_1, r_2)$, we consider \textit{the model problem} $\tilde {\mathcal L} = \mathcal L(0, \la^{M_1}, 0)$. Denote by $\varphi(x, \la)$ the solution of equation \eqref{eqv} satisfying the initial conditions $\varphi(0,\la) = 1$, $\varphi^{[1]}(0,\la) = 0$. 

Introduce the notations
\begin{equation} \label{not}
\def\arraystretch{1.5}
\left.
\begin{array}{c}
\la_{n0} = \la_n, \quad \la_{n1} = \tilde \la_n, \quad \rho_{n0} = \rho_n, \quad \rho_{n1} = \tilde \rho_n, \quad \alpha_{n0} = \alpha_n, \quad \alpha_{n1} = \tilde \alpha_n, \\
\quad I_0 = I, \quad I_1 = \tilde I, \quad m_{n0} = m_n, \quad m_{n1} = \tilde m_n, \quad 
\varphi_{j}(x, \la) = \dfrac{1}{j!} \dfrac{\partial^j}{\partial \la^j} \varphi(x, \la), \\
\varphi_{n+j, i}(x) = \varphi_j(x, \la_{ni}), \quad \tilde \varphi_{n+j, i}(x) = \tilde \varphi_j(x, \la_{ni}), \quad n \in I_i, \quad j = \overline{0, m_{ki}}.
\end{array} \quad \right\}
\end{equation}

By using the contour integration in the $\la$-plane, we derive the following relations (see Lemma~\ref{A}):
\begin{equation} \label{series}
\tilde \varphi_{n,i}(x) = \varphi_{n,i}(x) + \sum_{k = 1}^{\infty} (\tilde Q_{n,i; k,0}(x) \varphi_{k,0}(x) - \tilde Q_{n,i; k,1}(x) \varphi_{k,1}(x)), \quad n \ge 1, \, i = 0, 1,
\end{equation}
where $\tilde Q_{n,i; k,j}(x)$ are some functions which are constructed by using only the model problem $\tilde{\mathcal L}$ and the spectral data $\{ \la_n, \alpha_n\}_{n \ge 1}$ of $\mathcal L$. 

The relations \eqref{series} can be treated as an infinite linear system with respect to the unknown values $\{ \varphi_{n,i}(x)\}_{n \ge 1, \, i = 0, 1}$ for each fixed $x \in [0, \pi]$. However, it is inconvenient to use \eqref{series} for investigation of the inverse problem solvability, because the series in \eqref{series} converges only ``with brackets''. In order to achieve the absolute convergence, we transform \eqref{series} to a linear equation in a suitable Banach space.

Let $m$ be the Banach space of bounded infinite sequences $a = (a_{ni})_{n \ge 1, \, i = 0, 1}$ with the norm $\| a \| = \sup\limits_{n,i} |a_{ni}|$. In Section~\ref{sec:maineq}, the system \eqref{series} is reduced to the so-called \textit{main equation} in $m$ of form
\begin{equation} \label{main-eq}
\tilde \psi(x) = (E + \tilde H(x)) \psi(x), \quad x \in [0,\pi],
\end{equation}
where $\psi(x)$ and $\tilde \psi(x)$ are sequences of $m$, $\tilde H(x)$ is a compact operator in $m$ for each fixed $x \in [0,\pi]$, and $E$ is the identity operator in $m$. The sequences $\psi(x)$, $\tilde \psi(x)$ and the operator $\tilde H(x)$ are obtained by grouping the terms $\varphi_{n,i}(x)$, $\tilde \varphi_{n,i}(x)$, and $\tilde Q_{n,i; k,j}(x)$, respectively. Thus, $\tilde \psi(x)$ and $\tilde H(x)$ are constructed by the model problem $\tilde{\mathcal L}$ and the spectral data $\{ \la_n, \alpha_n\}_{n \ge 1}$, while the sequence $\psi(x)$ is related to the unknown coefficients $\sigma(x)$, $r_1(\la)$, and $r_2(\la)$ of the problem $\mathcal L$. 
The unique solvability of the main equation can be proved by using the standard technique of the method of spectral mappings.

Solving the main equation \eqref{main-eq}, we obtain the sequence $\psi(x)$, which can be used for constructing $\{ \varphi_{n,i}(x)\}_{n \ge 1, \, i = 0, 1}$. The main result of this paper is the following theorem, which provides reconstruction formulas for $\sigma(x)$, $r_1(\la)$, and $r_2(\la)$. Put $\hat M(\la) = M(\la) - \tilde M(\la)$. Let us consider the contour $\Gamma_N := \Bigr\{\la \in \mathbb C \colon |\la| = \Bigr( N + \frac{1}{2} \Bigl)^2\Bigl\}$, where $N$ is such an integer that $\la_{ni} \in \mbox{int} \, \Gamma_N$ if $m_{ni} > 1$. So, this circle contains all the eigenvalues with multiplicity more than one.

\begin{thm} \label{mainthm}
	In the case $M_1 = M_2$, the solution of Inverse Problem~\ref{ip:main3} can be found by the reconstruction formulas
	\begin{align} \notag 
	\sigma(x) &= -\frac{1}{\pi i}\oint_{\Gamma_N} {\Bigg(\tilde \varphi(x, \mu)\varphi(x, \mu)-\frac{1}{2}\Bigg)\hat M(\mu)}d\mu \\ \label{sigma_series}
	& - 2\sum_{k=N+1}^\infty \sum_{j = 0}^1 (-1)^j\alpha_{kj}\Bigg(\tilde \varphi_{k,j}(x)\varphi_{k,j}(x) - \frac{1}{2}\Bigg),
	\end{align}
	\begin{align} \notag
	r_1(\la) = \prod_{k = 1}^{M_1} (\la - \la_{k0}) \prod_{k = M_1+1}^{\infty} & \frac{\la - \la_{k0}}{\la - \la_{k1}}\Bigg(1 - \frac{1}{2\pi i} \oint_{\Gamma_N} {\frac{\tilde\varphi'(\pi, \mu)\varphi(\pi, \mu)}{\la - \mu} M(\mu)}d\mu \\ \label{r1_series}
	& - \sum_{k=N+1}^\infty \frac{\alpha_{k0}\tilde\varphi'_{k,0}(\pi)\varphi_{k,0}(\pi)}{\la - \la_{k0}} \Bigg),
	\end{align}
	\begin{align}\notag 
	r_2(\la) = & \prod_{k = 1}^{M_1} (\la - \la_{k0}) \prod_{k = M_1+1}^{\infty} \frac{\la - \la_{k0}}{\la - \la_{k1}}\Bigg(\frac{1}{2 \pi i} \oint_{\Gamma_N} {\frac{\tilde \varphi' (\pi, \mu) \varphi^{[1]} (\pi, \mu)}{\la - \mu} M(\mu)}d\mu \\ \notag
	&  -\frac{1}{2\pi i} \oint_{\Gamma_N} (\tilde \varphi (\pi, \mu) \varphi (\pi, \mu) - 1) \hat M(\mu)d\mu + \sum_{k=N+1}^\infty \frac{\alpha_{k0}\tilde \varphi'_{k,0} (\pi) \varphi^{[1]}_{k,0} (\pi)}{\la-\la_{k0}} \\ \label{r2_series}
	& - \sum_{k = N+1}^{\infty} \sum_{j=0}^1 (-1)^j\alpha_{kj}(\tilde \varphi_{k,j} (\pi) \varphi_{k,j} (\pi) - 1)\Bigg),
	\end{align}
	where the series in \eqref{sigma_series} converges in $L_2(0,\pi)$ and the series and the infinite products in \eqref{r1_series} and \eqref{r2_series} converge absolutely and uniformly on compact sets in $\mathbb C \setminus \{ \la_{ni}\}_{n \ge 1, \, i = 0, 1}$.
\end{thm}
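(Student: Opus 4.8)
The plan is to derive the three formulas from the (uniquely solvable) main equation \eqref{main-eq}: solving it produces the sequence $\psi(x)$, hence all the functions $\varphi_{n,i}(x)$ and, by summation, the solution $\varphi(x,\lambda)$ together with its quasi-derivative. The unified mechanism behind \eqref{sigma_series}--\eqref{r2_series} is contour integration combined with the residue theorem: each reconstructed object is first written as a series over the spectral data of products of $\varphi$, $\tilde\varphi$ and their (quasi-)derivatives; this series is then recognised as the residue series of a meromorphic function built from $\hat M(\lambda)$ or $M(\lambda)$; finally the contribution of the small indices is rewritten as the integral over $\Gamma_N$. The contour $\Gamma_N$ is chosen so that every multiple eigenvalue lies inside it, which is exactly why outside $\Gamma_N$ all poles are simple and the residues collapse to the explicit terms of the sums $\sum_{k=N+1}^\infty$, with the pair $i,j\in\{0,1\}$ and the factor $(-1)^j$ recording that $\hat M=M-\tilde M$ has poles at $\lambda_{k0}$ (residues $\alpha_{k0}$) and at $\lambda_{k1}$ (residues $\alpha_{k1}$, entering with the opposite sign).

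For \eqref{sigma_series} I would start from the bilinear identity of the method of spectral mappings adapted to $W_2^{-1}$ potentials (cf. \cite{BondTamkang}), which, after summation against the spectral data, represents $\sigma(x)-\tilde\sigma(x)$ as $-2$ times a residue series of $\varphi(x,\mu)\tilde\varphi(x,\mu)\hat M(\mu)$; crucially, in the singular setting one recovers $\sigma$ itself rather than $q=\sigma'$. The subtraction of $\tfrac12$ in the integrand $\bigl(\tilde\varphi\varphi-\tfrac12\bigr)$ is the regularisation that secures convergence: since $\varphi(x,\mu)\tilde\varphi(x,\mu)\to\tfrac12\bigl(1+\cos 2\rho x\bigr)$ as $|\mu|=|\rho^2|\to\infty$, removing the non-oscillating part $\tfrac12$ is precisely what makes the residue series converge in $L_2(0,\pi)$ once the eigenvalue asymptotics \eqref{la_asymp} and the $l_2$-property of $\{\varkappa_n\}$ are invoked. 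As the model potential is $\tilde\sigma\equiv0$, the left-hand side reduces to $\sigma(x)$, and splitting the full residue series into the $\Gamma_N$-integral (multiple eigenvalues) plus the tail $\sum_{k=N+1}^\infty$ (simple eigenvalues) gives \eqref{sigma_series}.

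For \eqref{r1_series}--\eqref{r2_series} I would use the characteristic function $\Delta(\lambda)=r_1(\lambda)\varphi^{[1]}(\pi,\lambda)+r_2(\lambda)\varphi(\pi,\lambda)$, whose zeros are the eigenvalues $\{\lambda_{k0}\}$, together with the model one $\tilde\Delta(\lambda)=\lambda^{M_1}\tilde\varphi^{[1]}(\pi,\lambda)$, whose zeros are $\{\lambda_{k1}\}$ (recall $\tilde\sigma\equiv0$, so $\tilde\varphi^{[1]}=\tilde\varphi'$, which is why primes rather than quasi-derivatives appear). The regularised infinite product $\prod_{k=1}^{M_1}(\lambda-\lambda_{k0})\prod_{k>M_1}\frac{\lambda-\lambda_{k0}}{\lambda-\lambda_{k1}}$ is an entire function carrying exactly the zeros $\{\lambda_{k0}\}$ and the same order and leading asymptotics as $\Delta/\tilde\Delta$; its convergence follows from \eqref{la_asymp}. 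The parenthetical factors are then the normalising meromorphic correctors that tend to $1$ at infinity and possess the correct poles and residues at the eigenvalues: these residues are computed from the proportionality $\psi(x,\lambda_{k0})=\beta_k\varphi(x,\lambda_{k0})$ with $\beta_k=\alpha_k\Delta'(\lambda_{k0})$ at a simple eigenvalue (whence $r_1(\lambda_{k0})=\psi(\pi,\lambda_{k0})=\beta_k\varphi(\pi,\lambda_{k0})$ and $-r_2(\lambda_{k0})=\beta_k\varphi^{[1]}(\pi,\lambda_{k0})$), and are expressed as the residue series of the kernels $\frac{\tilde\varphi'(\pi,\mu)\varphi(\pi,\mu)}{\lambda-\mu}M(\mu)$ and $(\tilde\varphi(\pi,\mu)\varphi(\pi,\mu)-1)\hat M(\mu)$ appearing in \eqref{r1_series}--\eqref{r2_series}, once more with the $\Gamma_N$-integral absorbing the multiple eigenvalues.

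The main obstacle is the rigorous justification of all these steps, because the series \eqref{series} converges only ``with brackets'': every manipulation must be carried out in the Banach space $m$ where \eqref{main-eq} lives, and the resulting formulas must be shown to converge in the asserted senses ($L_2(0,\pi)$ for $\sigma$, uniformly on compacta for $r_1,r_2$). The cleanest route, as announced in the abstract, is the approximation technique: one first proves the formulas for data differing from the model in only finitely many eigenvalues and residues, where every sum is finite and all identities are elementary, and then passes to the limit, using \eqref{la_asymp}, the $l_2$-bound on $\{\varkappa_n\}$ and uniform estimates on $\varphi,\tilde\varphi$ to control the tails and to identify the limits of the right-hand sides with the true coefficients. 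The delicate bookkeeping is the treatment of the multiple eigenvalues enclosed by $\Gamma_N$ and the verification that the contour integrals and the residue series match correctly at the interface index $k=N$.
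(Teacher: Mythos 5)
Your heuristic derivation (first three paragraphs) matches what the authors themselves describe as the way the formulas were guessed, and your final paragraph correctly names the paper's actual proof device: approximation by the truncated data \eqref{sdK}, which differ from the model data in only finitely many entries, after first establishing the convergence of the series and products (this is exactly Lemmas~\ref{lem:sigma} and~\ref{lem:conv_of_r}).

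There is, however, a genuine gap in how you close the limit passage. You propose to ``identify the limits of the right-hand sides with the true coefficients'' by controlling tails with \eqref{la_asymp} and uniform estimates on $\varphi,\tilde\varphi$; as stated this is circular, since you cannot compare $\sigma^K$ with $\sigma$ directly — that $\sigma$ satisfies \eqref{sigma_series} is precisely what is to be proved. The paper's identification is indirect and needs three ingredients absent from your plan: (i) one proves that the truncated formulas \eqref{sigmaK_series}--\eqref{r2K_series} define a genuine boundary value problem $\mathcal L^K = \mathcal L(\sigma^K, r_1^K, r_2^K)$ — in particular that $r_1^K$ and $r_2^K$ are polynomials of the correct degrees and normalization (Lemma~\ref{lem:degree}, where the cancellation of the denominator zeros rests on the identity $\mathscr E(\la_{n1}) = 0$ extracted from \eqref{varphiKint}), that $\Phi^K$ is the Weyl solution of $\mathcal L^K$ (Lemma~\ref{lem:sol}), and hence that the spectral data of $\mathcal L^K$ are exactly $\{\la_n^K, \alpha_n^K\}_{n \ge 1}$; (ii) one proves continuity of the \emph{direct} spectral map (Lemma~\ref{spectraldatacon}), so that the convergence $(\sigma^K, r_1^K, r_2^K) \to (\sigma^{\diamond}, r_1^{\diamond}, r_2^{\diamond})$ furnished by Lemma~\ref{coeffscon} yields convergence of the spectral data of $\mathcal L^K$ to those of the limit problem $\mathcal L^{\diamond}$; (iii) since by construction $\{\la_n^K, \alpha_n^K\} \to \{\la_n, \alpha_n\}$, the problem $\mathcal L^{\diamond}$ has the same spectral data as $\mathcal L$, and it is the uniqueness theorem (Theorem~\ref{thm:uniq}) — nowhere invoked in your proposal — that finally forces $\sigma^{\diamond} = \sigma$ and $r_j^{\diamond} \equiv r_j$. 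Without (i) and (iii), tail estimates give convergence of the right-hand sides of \eqref{sigma_series}--\eqref{r2_series} but not their equality to the unknown coefficients, so your argument as sketched does not close.
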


Notice that we can find the contour integral in \eqref{sigma_series} using the Residue theorem:
\begin{gather*}
\sigma(x) = - 2\sum_{i=0}^1 \sum_{k \in I_i} (-1)^{i} \left(\sum_{j=0}^{m_{ki} - 1} \alpha_{k+j,i} \sum_{p=0}^{j} \tilde \varphi_{k+p, i}(x)\varphi_{k+(j-p), i}(x) - \frac{1}{2} \alpha_{ki}\right),
\end{gather*}
where $I_i$ and $m_{ki}$ are defined in \eqref{not}.

Analogously, one can compute the contour integrals in \eqref{r1_series}--\eqref{r1_series}, but representations with contour integrals are more convenient for the purposes of our paper.

The derivation of the formula \eqref{sigma_series} for $\sigma(x)$ is based on the idea of substitution the series \eqref{series} into equation \eqref{eqv}. Then, we construct the Weyl solution of the problem $\mathcal L$ as a series analogous to \eqref{series} and use it to obtain the formulas \eqref{r1_series} and \eqref{r2_series} for the polynomials. This idea helps us to guess the form of the reconstruction formulas. However, it is inconvenient to use this idea for rigorous proofs, since the series do not converge absolutely and the potential is singular. Therefore, we prove Theorem~\ref{mainthm} by using the approximation approach. That is, we construct the approximations $(\sigma^K, r_1^K, r_2^K)$ to $(\sigma, r_1, r_2)$ by finite spectral data $\{ \la_n, \alpha_n \}_{n = 1}^K$ and pass to the limit as $K \to \infty$. These approximations can also be used for numerical solution of the inverse problem. 

Relying on the main equation \eqref{main-eq} and the reconstruction formulas of Theorem~\ref{mainthm}, we arrive at Algorithm~\ref{alg:sing} for solving Inverse Problem~\ref{ip:main}. Note that our reconstruction formulas are novel even for the case of regular potential $q \in L_2(0,\pi)$, which was considered by Freiling and Yurko~\cite{FrYu}. Therefore, in Section~\ref{sec:reg}, we apply our results to the regular case.

\section{Preliminaries and uniqueness} \label{sec:uniq}

In this section, we provide some preliminaries and prove the uniqueness theorem (Theorem~\ref{thm:uniq}).

Throughout this paper, we use the \textbf{notations}:

\begin{enumerate}
	\item $\la = \rho^2$, $\tau := \mbox{Im} \, \rho$, $\rho_n = \sqrt{\la_n}$, $\arg \rho_n \in \left[-\tfrac{\pi}{2}, \tfrac{\pi}{2} \right)$.
	\item The notation $\varkappa_a(\rho)$ is used for various entire functions in $\rho$ of exponential type not greater than a and such that $\int_{\mathbb{R}} |\varkappa_a(\rho)|^2 d\rho < \infty$. By the Paley-Wiener Theorem,
	\begin{gather*}
	\varkappa_a(\rho) = \int_{-a}^{a} f(x)e^{i\rho x} dx, \quad f \in L_2(-a, a).
	\end{gather*}
	Further we need the estimate $\varkappa_a(\rho) = o(e^{|\tau|a})$ as $|\rho| \rightarrow \infty$ uniformly by arg $\rho$.
	\item The notation $\{\varkappa_n\}$ is used for various sequences from $l_2$: $\sum_{n} |\varkappa_n|^2 < \infty$.
\end{enumerate}

Consider the boundary value problem $\mathcal L = \mathcal L(\sigma, r_1, r_2)$. Recall that $\varphi(x, \la)$ and $\psi(x, \la)$ are the solutions of equation \eqref{eqv} satisfying the initial conditions: $\varphi(0, \la) = 1$, $\varphi^{[1]}(0,\la) = 0$, $\psi(\pi, \la) = r_1(\la)$, $\psi^{[1]}(\pi, \la) = -r_2(\la)$. It can be easily seen that the eigenvalues of the problem $\mathcal L$ coincide with the zeros of the entire characteristic function
\begin{gather} \label{charfun}
\Delta(\la) = \psi(x, \la)\varphi^{[1]}(x, \la)-\varphi(x, \la)\psi^{[1]}(x, \la),
\end{gather}
which does not depend on the variable $x$. Substituting $x=\pi$ into \eqref{charfun}, we get the following representation:
\begin{gather} \label{charfun1}
\Delta(\la) = r_1(\la)\varphi^{[1]}(\pi, \la) + r_2(\la)\varphi(\pi, \la).
\end{gather}

In order to obtain the asymptotics of $\Delta(\la)$, we need the following proposition.

\begin{prop}[\cite{BondTamkang}] \label{prop1}
The functions $\varphi(x, \la)$ and $\varphi^{[1]}(x, \la)$ can be represented as follows:
\begin{gather*}
    \varphi(x, \la) = \cos\rho x + \int_0^{x}  \mathcal{K}_1(x, t) \cos\rho t \, dt, \\
    \varphi^{[1]}(x, \la) = -\rho\sin\rho x + \rho\int_0^{x} \mathcal{K}_2(x, t) \sin\rho t \, dt + \mathcal{C}(x),
\end{gather*}
where the functions $\mathcal{K}_j(x, t)$, $j = 1, 2$, belong to $\mathcal{D}_{0, \pi}$ and the function $\mathcal{C}(x)$ is continuous on $[0, \pi]$. Here and below, $\mathcal{D}_{0, \pi}$ is the class of functions $\mathcal K(x, t)$ that are square integrable in the region $\{(x, t): 0 < t < x < \pi\}$ and fulfill the conditions $\mathcal K(x, .) \in L_2(0,x)$ for each fixed $x \in (0,\pi]$ and $\sup_x \| \mathcal K(x, .) \|_{L_2(0,x)} < \infty$.
\end{prop}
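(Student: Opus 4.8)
The plan is to reduce the second-order equation with distributional potential to a first-order system for the vector $(\varphi, \varphi^{[1]})^{\mathsf T}$, convert it into coupled Volterra integral equations, and then extract the transformation kernels $\mathcal K_1$, $\mathcal K_2$ and the continuous remainder $\mathcal C$ by successive approximations in the class $\mathcal D_{0,\pi}$.

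First I would rewrite $l y = \la y$ with $\la = \rho^2$ as the system $Y' = (A_0(\rho) + B)Y$, where $Y = (\varphi, \varphi^{[1]})^{\mathsf T}$, the free matrix $A_0(\rho)$ has rows $(0,1)$ and $(-\rho^2, 0)$, and the perturbation $B$ has rows $(\sigma, 0)$ and $(-\sigma^2, -\sigma)$. Because $\sigma \in L_2(0,\pi)$, and hence $\sigma^2 \in L_1(0,\pi)$, the entries of $B$ are integrable, so the initial value problem is well posed and Duhamel's formula applies. Using the explicit free fundamental matrix $\Phi_0(x-t)$, whose entries are $\cos\rho(x-t)$, $\rho^{-1}\sin\rho(x-t)$, $-\rho\sin\rho(x-t)$ and $\cos\rho(x-t)$, together with the initial conditions $\varphi(0,\la) = 1$, $\varphi^{[1]}(0,\la) = 0$, I obtain a pair of coupled Volterra integral equations, e.g.
\[
\varphi(x,\la) = \cos\rho x + \int_0^x \Bigl[ \cos\rho(x-t)\,\sigma \varphi + \tfrac{\sin\rho(x-t)}{\rho}\bigl(-\sigma^2\varphi - \sigma\varphi^{[1]}\bigr) \Bigr](t)\,dt,
\]
and an analogous one for $\varphi^{[1]}$ coming from the second row of $\Phi_0$.

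Next I would insert the claimed ansatz $\varphi = \cos\rho x + \int_0^x \mathcal K_1(x,t)\cos\rho t\,dt$ and $\varphi^{[1]} = -\rho\sin\rho x + \rho\int_0^x \mathcal K_2(x,t)\sin\rho t\,dt + \mathcal C(x)$ into these equations and use the product-to-sum identities for $\cos\rho(x-t)\cos\rho s$, $\sin\rho(x-t)\sin\rho s$, and so on, to rewrite every product of trigonometric factors as a single-frequency term. After the natural changes of variables this eliminates the parameter $\rho$ and yields closed Volterra/Goursat-type integral equations for $\mathcal K_1$, $\mathcal K_2$ and $\mathcal C$ alone. I would then solve these by successive approximations, estimating the iterates with the Cauchy--Schwarz inequality and $\|\sigma\|_{L_2}$ to show that the Neumann series converges in $\mathcal D_{0,\pi}$ for the kernels and in $C[0,\pi]$ for $\mathcal C$, which gives the stated membership $\mathcal K_j \in \mathcal D_{0,\pi}$ and continuity of $\mathcal C$.

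The hard part will be the low regularity of the potential. Since $\sigma$ is only square integrable (and $\sigma^2$ only $L_1$), one cannot run the argument in a space of continuous kernels as in the classical Marchenko theory; the whole construction must be carried out in the $L_2$-based class $\mathcal D_{0,\pi}$, controlling the iterates through the uniform-in-$x$ bounds $\sup_x \|\mathcal K(x,\cdot)\|_{L_2(0,x)}$. In particular, the $\rho^{-1}$-weighted entries of $\Phi_0$ must be shown to produce kernels of the correct class despite the $\sigma^2$ term, and the diagonal contributions at $t = x$ in the equation for $\varphi^{[1]}$ generate the separate continuous function $\mathcal C(x)$, which is $O(1)$ in $\rho$ and cannot be absorbed into the $\sin$-kernel; keeping track of this splitting is the main bookkeeping difficulty.
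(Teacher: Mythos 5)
The paper offers no proof of this proposition: it is quoted from \cite{BondTamkang}, where it rests on the transformation-operator theory of Hryniv and Mykytyuk for $W_2^{-1}$-potentials (cf.\ \cite{Hry03}). So there is no in-paper argument to match, and your proposal amounts to re-deriving the cited result from scratch. Your route --- the first-order system for $(\varphi,\varphi^{[1]})^{\mathsf T}$ with perturbation matrix $B$ having $L_1$ entries (since $\sigma\in L_2$ implies $\sigma^2\in L_1$), Duhamel/Volterra equations, the transformation-operator ansatz, and successive approximations in the $L_2$-based class $\mathcal D_{0,\pi}$ --- is essentially the standard strategy in that literature and has no structural flaw; your Volterra equation for $\varphi$ is correct.

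Two places where the sketch needs repair. First, ``Goursat-type'' equations for the kernels are not available here: the classical Goursat problem involves $q=\sigma'$, which is not a function, so you must stay at the level of the integral equations, representing each Neumann iterate by kernels and summing with factorial-type bounds such as $\sup_x\|\mathcal K^{(n)}(x,\cdot)\|_{L_2(0,x)}\le C^n/\sqrt{(n-1)!}$; for instance, the first iterate yields $\mathcal K_1^{(1)}(x,s)=\tfrac12\bigl(\sigma(\tfrac{x-s}{2})+\sigma(\tfrac{x+s}{2})\bigr)$ plus an $O(\rho^{-1})$ kernel coming from the $\sigma^2$ entry. Second, your bookkeeping of what produces $\mathcal C(x)$ is imprecise and hides a cancellation your proof must verify: treated row by row, the iteration produces terms $\tfrac{\rho}{2}\sin\rho x\int_0^x\sigma\,dt$ (second row) and $\tfrac12\cos\rho x\int_0^x\sigma\,dt$ (first row), which are \emph{not} of the claimed form; they disappear only when the two coupled contributions $\sigma\varphi$ and $\sigma\varphi^{[1]}$ are combined, via $\sin\rho(x-t)\cos\rho t-\cos\rho(x-t)\sin\rho t=\sin\rho(x-2t)$ and the analogous cosine identity --- this is precisely the effect of working with the quasi-derivative. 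Moreover, the surviving $O(1)$ terms of the type $g(x)\cos\rho x$ (with $g(x)=-\tfrac12\int_0^x\sigma^2\,dt$, say) are not unabsorbable diagonal contributions: they \emph{are} absorbed into the $\sin$-kernel through the constant-in-$t$ kernel, since $g(x)\cos\rho x=\rho\int_0^x(-g(x))\sin\rho t\,dt+g(x)$, and it is the $\rho$-independent residues such as $g(x)$ that accumulate into $\mathcal C(x)$, whose continuity then follows from $\sigma^2\in L_1$. With these corrections the plan goes through.
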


Substituting the asymptotics of Proposition~\ref{prop1} into \eqref{charfun1}, we get
\begin{equation}\label{asympt_charfun}
\Delta(\la) =
    \left\{\begin{array}{ll}
        -\rho^{2M_1+1} \sin\rho\pi + \rho^{2M_1+1} \varkappa_\pi(\rho) + O(|\rho|^{2M_1}e^{\pi|\tau|}), \quad & M_1 \ge M_2, \\
        \rho^{2M_2} \cos\rho\pi + \rho^{2M_2} \varkappa_\pi(\rho) + O(|\rho|^{2M_2-1}e^{\pi|\tau|}), \quad & M_1 < M_2. \\
    \end{array}\right.
\end{equation}

Now we are ready to prove Lemma~\ref{eigen_asymp} on the eigenvalue asymptotics.

\begin{proof}[Proof of Lemma~\ref{eigen_asymp}]
For definiteness, suppose that $M_1 \ge M_2$. The opposite case can be studied similarly. According to \eqref{asympt_charfun}, the entire characteristic function $\Delta(\la)$ has the asymptotics
\begin{gather} \label{asymptDelta}
\Delta(\la) = -\rho^{2M_1+1} \sin\rho\pi + \rho^{2M_1+1} \varkappa_\pi(\rho) + O(|\rho|^{2M_1}e^{\pi|\tau|}).
\end{gather}

Using the standard approach based on Rouch\'e's Theorem (see, e.g., \cite[Theorem~1.1.3]{FY01}), one can easily show that the zeros of $\Delta(\la)$ are asymptotically close to the zeros of the following function:
$$
\Delta^0(\la) = -\rho^{2M+1} \sin \rho \pi = -\la^{M+1} \frac{\sin \sqrt{\la} \pi}{\sqrt \la},
$$
which are $\la_1^0 = \la_2^0 = \dots = \la_{M+1}^0 = 0$, $\la_n^0 = (n - M_1 - 1)^2$, $n > M_1 + 1$. Namely, we get
\begin{equation} \label{roughasympt}
\rho_n = n - M_1 - 1 + \varepsilon_n, \quad \varepsilon_n = o(1), \quad n \to \infty.
\end{equation}

In order to obtain a more precise estimate of the remainder term, we put $\rho = \rho_n$ with sufficiently large $n$ in \eqref{asymptDelta}:
\begin{equation} \label{sm1}
\rho_n^{-(2M_1+1)} \Delta(\rho_n^2) = -\sin \rho_n \pi + \varkappa_{\pi}(\rho_n) + O\left(n^{-1} \right) = 0.
\end{equation}

Using the Taylor series at zero, we get
\begin{align*}
\sin\rho_n \pi & = (-1)^m \sin (\varepsilon_n \pi) = 
(\varepsilon_n \pi + O(\varepsilon_n^3)), \quad m := n - M_1 - 1, \\
\varkappa_\pi(\rho_n) & =  \int_{-\pi}^{\pi} f(x) e^{i(n+\varepsilon_n)x} dx \\ & = \int_{-\pi}^{\pi} f(x)e^{inx} dx + i\varepsilon_n \int_{-\pi}^{\pi} xf(x)e^{inx} dx + O(\varepsilon_n^2) = \varkappa_n + o(\varepsilon_n).
\end{align*}

Substituting these representations into \eqref{sm1}, we can get that $\varepsilon_n = \varkappa_n$. Together with \eqref{roughasympt}, this concludes the proof.
\end{proof}

 Analogously, we obtain the following asymptotic formula for the weight numbers:
\begin{gather}\label{alpha_asymp}
\alpha_n = \dfrac{2}{\pi} + \varkappa_n, \quad n \ge 1.
\end{gather}

Using this asymptotics \eqref{asympt_charfun}, we can get the following inequality in the case $M_1 \ge M_2$ (see formula (16) in \cite{FrYu}):
\begin{gather}\label{abs_delta}
|\Delta(\la)| \ge C_\delta|\rho|^{2M_1+1}e^{\pi|\tau|}, \quad \rho \in G_\delta = \{\rho: |\rho-n| \ge \delta, n \in \mathbb{Z}\}.
\end{gather}

Let us introduce the Weyl solution of equation \eqref{eqv}:
\begin{gather*}
\Phi(x, \la) = -\frac{\psi(x, \la)}{\Delta(\la)},
\end{gather*}
which satisfies the boundary conditions
\begin{equation} \label{bcPhi}
\Phi^{[1]}(0, \la)=1, \quad 
r_1(\la)\Phi^{[1]}(\pi, \la)+r_2(\la)\Phi(\pi, \la)=0.
\end{equation}

In addition, denote by $S(x, \la)$ the solution of \eqref{eqv} satisfying the initial conditions $S(0, \la) = 0$, $S^{[1]}(0,\la) = 1$.
One can easily show that
\begin{gather} \label{Phi}
\Phi(x, \la) = S(x, \la)+M(\la)\varphi(x, \la),
\end{gather}
where $M(\la)$ is the Weyl function defined by \eqref{defM}.

Proceed to the proof of the uniqueness theorem.

\begin{proof}[Proof of Theorem~\ref{thm:uniq}]
Consider two boundary value problems $\mathcal L = \mathcal L(\sigma, r_1, r_2)$ and $\tilde {\mathcal L} = \mathcal L(\tilde \sigma, \tilde r_1, \tilde r_2)$ such that $(r_1, r_2) \in \mathcal N$, $(\tilde r_1, \tilde r_2) \in \mathcal N$, and $M(\la) \equiv \tilde M(\la)$.
Let us define the matrix of spectral mappings $P(x, \la)$ as follows:
\begin{equation*}
P(x, \la)
	\begin{pmatrix}
	\tilde\varphi(x, \la) & \tilde\Phi(x, \la)\\
	\tilde\varphi^{[1]}(x, \la) & \tilde\Phi^{[1]}(x, \la)
	\end{pmatrix}
	=
	\begin{pmatrix}
	\varphi(x, \la) & \Phi(x, \la)\\
	\varphi^{[1]}(x, \la) & \Phi^{[1]}(x, \la)
	\end{pmatrix}
\end{equation*}

So, we get:
\begin{equation} \label{phi_from_p}
	\begin{cases}
	P_{11}(x, \la)\tilde\varphi(x, \la)+P_{12}(x, \la)\tilde\varphi^{[1]}(x, \la)=\varphi(x, \la),\\
	P_{11}(x, \la)\tilde\Phi(x, \la)+P_{12}(x, \la)\tilde\Phi^{[1]}(x, \la)=\Phi(x, \la).
	\end{cases}
\end{equation}

The determinant of this system equals $1$, so
\begin{equation} \label{p11p12}
	\begin{cases}
	P_{11}(x, \la)=\tilde\Phi^{[1]}(x, \la)\varphi(x, \la)-\Phi(x, \la)\tilde\varphi^{[1]}(x, \la),\\
	P_{12}(x, \la)=\Phi(x, \la)\tilde\varphi(x, \la)-\tilde\Phi(x, \la)\varphi(x, \la).
	\end{cases}
\end{equation}

We can also get other representations of $P_{11}$ and $P_{12}$:
\begin{equation} \label{relP}
	\begin{cases}
	P_{11}(x, \la)=1+\varphi(x, \la)(\tilde\Phi^{[1]}(x, \la)-\Phi^{[1]}(x, \la))+\Phi(x, \la)(\varphi^{[1]}(x, \la)-\tilde\varphi^{[1]}(x, \la)),\\
	P_{12}(x, \la)=-\dfrac{\tilde\varphi(x, \la)\psi(x, \la)}{\Delta(\la)}+\dfrac{\varphi(x, \la)\tilde\psi(x, \la)}{\tilde\Delta(\la)}.
	\end{cases}
\end{equation}

Recall that $M(\la) = -\dfrac{\psi(0,\la)}{\psi^{[1]}(0,\la)}$. One can easily show that the functions $\psi(0,\la)$ and $\psi^{[1]}(0,\la)$ do not have common zeros (see Lemma~1 in \cite{FrYu}). Since $M(\la) \equiv \tilde M(\la)$, their poles coincide: $\la_n = \tilde \la_n$, $n \ge 1$ (counting with multiplicities). Taking the asymptotics \eqref{la_asymp} into account, we conclude that the both problems $\mathcal L$ and $\tilde{\mathcal L}$ have the same type (either $M_1 = M_2$ or $M_1 = M_2-1$). Moreover, $M_j = \tilde M_j$, $j = 1, 2$. 

For definiteness, consider the case $M_1 = M_2$. The other case is analogous.
Using Proposition~\ref{prop1}, we get:
\begin{gather*} 
\varphi(x, \la) = O(e^{x|\tau|}), \quad \varphi^{[1]}(x, \la) = O(|\rho|e^{x|\tau|}), \\ 
\varphi^{[1]}(x, \la) - \tilde\varphi^{[1]}(x, \la) = o(|\rho|e^{x|\tau|}), \quad |\rho| \rightarrow \infty.
\end{gather*}

Analogously, one can obtain the estimates
\begin{gather*}
\psi(x, \la) = O(|\rho|^{2M_1}e^{(\pi-x)|\tau|}), \quad \psi^{[1]}(x, \la) = O(|\rho|^{2M_1 + 1}e^{(\pi-x)|\tau|}), \\
\psi^{[1]}(x, \la) - \tilde \psi^{[1]}(x, \la) = o(|\rho|^{2M_1 + 1}e^{(\pi-x)|\tau|}), \quad |\rho| \to \infty.
\end{gather*}

Using these estimates together with \eqref{abs_delta} and \eqref{relP}, we obtain
\begin{gather} \label{p_as}
P_{11}(x, \la) = 1 + o(1), \quad P_{12}(x, \la) = o(1), \quad \rho \in G_\delta, \: |\rho| \to \infty.
\end{gather}

According to \eqref{Phi}, \eqref{p11p12}, and $M(\la) = \tilde M(\la)$ we get:
\begin{gather*}
P_{11} = \varphi(x, \la) \tilde S^{[1]}(x, \la) - S(x, \la) \tilde \varphi^{[1]}(x, \la), \\
P_{12} = S(x, \la) \tilde \varphi(x, \la) - \varphi(x, \la) \tilde S(x, \la),
\end{gather*}
so $P_{11}(x, \la)$ and $P_{12}(x, \la)$ are entire in $\la$, then by Liouville's Theorem $P_{11}(x, \la) \equiv 1$, $P_{12}(x, \la) \equiv 0$. Hence $\varphi(x, \la) \equiv \tilde\varphi(x, \la)$, $\Phi(x, \la) \equiv \tilde\Phi(x, \la)$.

Substituting $\varphi(x, \la)$ into $\mathit{l}y=\la y$ and $\tilde{\mathit{l}}y=\la y$, we get
\begin{gather*}
((\sigma(x)-\tilde\sigma(x))\varphi(x, \la))' = (\sigma(x)-\tilde\sigma(x))\varphi'(x, \la).
\end{gather*}
The function $(\sigma(x)-\tilde\sigma(x))\varphi(x, \la)$ is absolutely continuous on $[0, \pi]$. Fix such $\la$ that $\varphi(x, \la) \neq 0$, $x \in [0, \pi]$. Then, the function $(\sigma(x)-\tilde\sigma(x))$ is absolutely continuous on $[0, \pi]$ and $(\sigma(x)-\tilde\sigma(x))' = 0$. So $(\sigma(x)-\tilde\sigma(x))$ is a constant.
As $\varphi^{[1]}(0, \la) = \tilde\varphi^{[1]}(0, \la) = 0$, so we get
$\sigma(0)-\tilde\sigma(0) = 0$.
Since $\sigma(x)-\tilde\sigma(x)$ is a constant, then $\sigma(x) \equiv \tilde\sigma(x)$.

As $\Phi(x, \la) = \tilde \Phi(x, \la)$, then from right boundary condition \eqref{bcPhi} for the Weyl solution we get:
\begin{gather} \label{difr}
r_1(\la)\tilde r_2(\la) - r_2(\la)\tilde r_1(\la) = 0.
\end{gather}

Recall that $r_1(\la)$ and $r_2(\la)$ as well as $\tilde r_1(\la)$ and $\tilde r_2(\la)$ are relatively prime polynomials and $(r_1, r_2), (\tilde r_1, \tilde r_2) \in \mathcal N$. Therefore, \eqref{difr} implies $r_1(\la) \equiv \tilde r_1(\la)$ and $r_2(\la) \equiv \tilde r_2(\la)$.
\end{proof}

\section{Main equation} \label{sec:maineq}

In this section, we obtain the main equation of Inverse Problem~\ref{ip:main3} in the Banach space $m$ of bounded infinite sequences. First, we choose the model problem. Second, we integrate the entries $P_{11}(x, \la)$ and $P_{12}(x, \la)$ of the matrix of spectral mappings by special contours in the $\la$-plane and derive the infinite linear system \eqref{phi_ni}. Finally, we transform \eqref{phi_ni} to a linear equation in $m$.

Starting from this section, we agree that $M_1 = M_2$. The case $M_1 = M_2 - 1$ can be considered analogously. 

Let us introduce the model problem $\tilde {\mathcal{L}} = \mathcal{L}(\tilde \sigma, \tilde r_1, \tilde r_2)$:
\begin{gather*}
    \tilde {\mathit{l}}y(x) = \la y(x), \quad x \in (0, \pi), \\
    y^{[1]}(0) = 0, \quad \tilde r_1(\la)y^{[1]}(\pi) + \tilde r_2(\la)y(\pi) = 0,
\end{gather*}
where $M_1 = \tilde M_1$. For example, we can take $\tilde \sigma(x) \equiv 0$, $\tilde r_1(\la) = \la^{M_1}$ and $\tilde r_2(\la) \equiv 0$. So, we get the following problem $\tilde {\mathcal{L}}$:
\begin{gather} \label{eqv_tilde}
    -y''(x) = \la y(x), \quad x \in (0, \pi), \quad
    y'(0) = 0, \quad \la^{M_1}y'(\pi) = 0.
\end{gather}

We agree that, if a certain symbol $\gamma$ denotes an object related to $\mathcal{L}$, then the symbol $\tilde \gamma$ with tilde will denote the analogous object related to $\tilde {\mathcal{L}}$ and $\hat \gamma = \gamma - \tilde \gamma$.

It is easy to show that problem \eqref{eqv_tilde} has the following spectral data:
\begin{gather}\label{tildesd}
\tilde \la_n =
    \left\{\begin{array}{ll}
        (n - M_1 - 1)^2, \quad & n > M_1,\\
        0, \quad &  1 \le n \le M_1,
            \end{array}\right. \quad
\tilde \alpha_n =
	\left\{\begin{array}{ll}
        \frac{2}{\pi}, \quad & n > M_1 + 1,\\
        0, \quad &  2 \le n \le M_1 + 1,\\
        \frac{1}{\pi}, \quad &  n = 1.
            \end{array}\right.
\end{gather}

Furthermore, the equation $\tilde l y = \la y$ has the solutions $\tilde \varphi(x, \la) = \cos \rho x$ and $\tilde \psi(x,  \la) = \la^{M_1} \cos \rho (\pi - x)$.
Hence, the Weyl function and the Weyl solution of the problem $\tilde{\mathcal L}$ have the form
\begin{gather} \label{tildeMmain}
\tilde M(\la) = \dfrac{\cos \rho \pi}{\rho \sin \rho \pi}, \quad
\tilde \Phi(x, \la) = \dfrac{\cos \rho (\pi - x)}{\rho \sin \rho \pi},
\end{gather}
respectively. Note that $\tilde M(\la)$ coincides with the Weyl function of the problem
$$
-y''(x) = \la y(x), \quad x \in (0, \pi), \quad y'(0) = y'(\pi) = 0.
$$ 

However, for convenience, we add $M_1$ zero eigenvalues and consider the model problem $\tilde{\mathcal L}$ of form \eqref{eqv_tilde}.

Introduce the notation $\langle y, z \rangle := y z' - y' z$.
Define the auxiliary functions
\begin{gather} \label{funcA}
\tilde D(x, \la, \mu) = \frac {\langle \tilde \varphi(x, \la), \tilde \varphi(x, \mu)\rangle}{\la - \mu} = \int_0^x {\tilde \varphi(t, \la) \tilde \varphi(t, \mu)}dt, \quad \tilde A(x, \la, \mu) = \tilde D(x, \la, \mu) \hat M(\mu).
\end{gather}
Also, recall the notations \eqref{not}.

Similarly to Lemma~3 in \cite{FrYu}, we obtain the following result.

\begin{lem} \label{A}
The following representation holds:
\begin{gather*}
\tilde A (x, \la, \mu) = \sum_{j=0}^{m_{ki}-1}\frac{\tilde A_{k+j, i}(x, \la)}{(\mu - \la_{ki})^{j+1}}+\tilde A_{k, i}^0(x, \la, \mu),
\end{gather*}
where $\tilde A_{k+j,i} (x, \la) = \mathop{\mathrm{Res}}\limits_{\mu = \la_{ki}} \tilde A(x, \la, \mu)(\mu - \la_{ki})^j$, are the coefficients of the principal part of $\tilde A (x, \la, \mu)$ in a neighbourhood of the point $\mu = \la_{kj}$ and $\tilde A_{k, i}^0(x, \la, \mu)$ is analytic with respect to $\mu$ near $\la_{kj}$.
\end{lem}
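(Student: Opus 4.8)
The plan is to split the two factors of $\tilde A(x,\la,\mu)=\tilde D(x,\la,\mu)\hat M(\mu)$ according to their $\mu$-dependence: the factor $\tilde D$ is analytic in $\mu$, so every pole of $\tilde A$ is inherited from $\hat M$, whose pole structure is already known from Lemma~\ref{weyl_func}. The decomposition then follows by multiplying a Taylor series by a Laurent series and collecting powers of $(\mu-\la_{ki})$.

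First I would verify that $\tilde D(x,\la,\mu)$ is entire in $\mu$ for each fixed $x$ and $\la$. Since the model problem has $\tilde\sigma\equiv0$, the explicit solution is $\tilde\varphi(x,\la)=\cos\rho x$, which is even in $\rho$ and therefore entire in $\la$ (and in $\mu$); likewise $\tilde\varphi'(x,\la)=-\rho\sin\rho x$ is entire. Hence the numerator $\langle\tilde\varphi(x,\la),\tilde\varphi(x,\mu)\rangle$ in \eqref{funcA} is entire in both variables, and it vanishes on the diagonal $\la=\mu$ because $\langle y,y\rangle\equiv0$. Thus the factor $\la-\mu$ in the denominator cancels, and $\tilde D(x,\la,\mu)$ extends to an entire function of $\mu$. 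In particular it has a Taylor expansion $\tilde D(x,\la,\mu)=\sum_{p\ge0}\tilde D_p(x,\la)(\mu-\la_{ki})^p$ near $\mu=\la_{ki}$, with $\tilde D_p(x,\la)=\frac{1}{p!}\partial_\mu^p\tilde D(x,\la,\mu)\big|_{\mu=\la_{ki}}$.

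Next I would read off the principal part of $\hat M(\mu)=M(\mu)-\tilde M(\mu)$ at $\mu=\la_{ki}$. By Lemma~\ref{weyl_func} and \eqref{weyl_sum}, $M(\mu)$ has at each eigenvalue $\la_{k0}$ of $\mathcal L$ the principal part $\sum_{j=0}^{m_{k0}-1}\alpha_{k+j,0}(\mu-\la_{k0})^{-(j+1)}$, and the same representation with the model spectral data \eqref{tildesd} holds for $\tilde M(\mu)$ at the eigenvalues $\la_{k1}$ of $\tilde{\mathcal L}$. Consequently, near $\la_{ki}$ the function $\hat M$ has the principal part $(-1)^i\sum_{j=0}^{m_{ki}-1}\alpha_{k+j,i}(\mu-\la_{ki})^{-(j+1)}$, the sign $(-1)^i$ recording that $\tilde M$ enters $\hat M$ with a minus; this is the same $(-1)^i$ that surfaces in the reconstruction formulas \eqref{sigma_series}--\eqref{r2_series}. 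Multiplying this by the Taylor series of $\tilde D$ and collecting the coefficient of $(\mu-\la_{ki})^{-(j+1)}$ gives the principal-part coefficients
\[
\tilde A_{k+j,i}(x,\la)=(-1)^i\sum_{p=0}^{m_{ki}-1-j}\tilde D_p(x,\la)\,\alpha_{k+j+p,i},
\]
while the nonnegative powers assemble into the analytic remainder $\tilde A^0_{k,i}(x,\la,\mu)$, yielding the claimed representation.

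Once the analyticity of $\tilde D$ is established, the remainder is essentially bookkeeping, so the main point requiring care is the possibility that an eigenvalue of $\mathcal L$ coincides with one of $\tilde{\mathcal L}$ (for instance the $M_1$ zero eigenvalues common to both, cf.\ \eqref{tildesd}). At such a point the principal parts of $M$ and of $-\tilde M$ must be added before the expansion; I would handle this either by summing the two contributions at the coinciding pole or by arranging the model problem so that all nonzero poles are distinct. This is the direct analogue of Lemma~3 in \cite{FrYu}, adapted to the singular setting via the explicit model solution $\tilde\varphi(x,\la)=\cos\rho x$.
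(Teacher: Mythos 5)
Your proof is correct and takes essentially the route the paper intends: the paper omits the argument, citing Lemma~3 of \cite{FrYu}, whose proof is precisely your factorization $\tilde A = \tilde D \hat M$ with $\tilde D(x,\la,\mu)$ entire in $\mu$ (since $\langle \tilde\varphi(x,\la),\tilde\varphi(x,\mu)\rangle$ vanishes at $\mu=\la$, making the singularity removable) combined with the principal part of $\hat M$ from Lemma~\ref{weyl_func} and \eqref{tildesd}, then multiplying the Taylor series by the Laurent principal part. Your explicit coefficient formula $\tilde A_{k+j,i}(x,\la)=(-1)^i\sum_{p=0}^{m_{ki}-1-j}\tilde D_p(x,\la)\,\alpha_{k+j+p,i}$ and your flagging of possibly coinciding poles of $M$ and $\tilde M$ are sound refinements consistent with the paper's framework.
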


Denote
\begin{gather*}
\tilde Q_{n+p, i; k, j} (x) = \frac{1}{p!} \frac{\partial^p}{\partial \la^p} \tilde A_{k, j}(x, \la)\bigg|_{\la=\la_{n, i}}, \quad n \in I_i, \quad p=\overline{0, m_{ni}-1}, \quad i=0, 1.
\end{gather*}

\begin{lem}
The following relations hold for $x \in [0,\pi]$:
\begin{gather} \label{varphifromtilde}
\tilde \varphi (x, \la) = \varphi (x, \la) + \sum_{i=0}^1 (-1)^i \sum_{k \in I_i} \sum_{j=0}^{m_{k_i}-1} \tilde A_{k+j, i}(x, \la) \varphi_{k+j, i}(x), \quad n \ge 1,
\\ \label{phi_ni}
\tilde \varphi_{n, i}(x) = \varphi_{n, i}(x) + \sum_{k=1}^{\infty} (\tilde Q_{n, i; k, 0} (x) \varphi_{k, 0}(x) - \tilde Q_{n, i; k, 1} (x) \varphi_{k, 1}(x)).
\end{gather}

The series in \eqref{varphifromtilde} converges in the sense
\begin{equation} \label{conv}
\lim_{K \to \infty} \sum_{i = 0}^1 (-1)^i \sum_{k \in I_i, \, k \le K} (\dots)
\end{equation}
uniformly with respect to $\la$ on compact sets of $\mathbb C$.
\end{lem}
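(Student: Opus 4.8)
The plan is to derive \eqref{varphifromtilde} by contour integration of the entries of the matrix of spectral mappings, to read off the residues with the help of Lemma~\ref{A}, and then to specialize the result to the eigenvalues in order to obtain \eqref{phi_ni}; this follows the scheme of Lemma~3 in \cite{FrYu}, the only new feature being that the estimates of Proposition~\ref{prop1} must replace the classical ones because of the singular potential. First I would rewrite the first relation in \eqref{phi_from_p} as
\[
\varphi(x,\la)-\tilde\varphi(x,\la)=(P_{11}(x,\la)-1)\,\tilde\varphi(x,\la)+P_{12}(x,\la)\,\tilde\varphi^{[1]}(x,\la).
\]
By \eqref{Phi} and its tilde-analogue $\tilde\Phi=\tilde S+\tilde M\tilde\varphi$, substitution into \eqref{p11p12} yields $P_{11}=(\tilde S^{[1]}\varphi-S\tilde\varphi^{[1]})-\hat M\varphi\tilde\varphi^{[1]}$ and $P_{12}=(S\tilde\varphi-\tilde S\varphi)+\hat M\varphi\tilde\varphi$, so that $P_{11}-1$ and $P_{12}$ are meromorphic in $\la$ with poles only at the points $\la_{k0}$ (coming from $M$) and $\la_{k1}$ (coming from $\tilde M$), their principal parts being governed by $\hat M=M-\tilde M$.

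Next, for fixed $\la$ and the expanding contours $\gamma_N=\{\mu:|\mu|=(N+\tfrac12)^2\}$ enclosing $\la$ and all poles with index $k\le N$, I would apply Cauchy's formula to the scalar functions $P_{11}(x,\cdot)-1$ and $P_{12}(x,\cdot)$. The asymptotics \eqref{p_as}, together with the lower bound \eqref{abs_delta} and the representations of Proposition~\ref{prop1}, guarantee that the integrals of $(P_{1k}(x,\mu)-\delta_{1k})/(\mu-\la)$ over $\gamma_N$ vanish as $N\to\infty$, so that $P_{11}(x,\la)-1$ and $P_{12}(x,\la)$ equal minus the sums of the corresponding residues over all poles. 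Substituting these into the displayed relation and combining the $P_{11}$- and $P_{12}$-contributions, the factors $\tilde\varphi^{[1]}(x,\la)$ and $\tilde\varphi(x,\la)$ assemble, at each pole $\la_{ki}$, into the Wronskian $\langle\tilde\varphi(x,\la),\tilde\varphi(x,\la_{ki})\rangle$; dividing by $\la-\la_{ki}$ produces exactly the function $\tilde D$ of \eqref{funcA}, and the residues of $\hat M$ produce the factor $\hat M$, i.e.\ the function $\tilde A$. For multiple poles the same computation with the Laurent coefficients of Lemma~\ref{A} yields the inner sum over $j$, while the decomposition $\hat M=M-\tilde M$ is responsible for the two families $i=0,1$ and for the sign $(-1)^i$. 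This gives \eqref{varphifromtilde}. The relation \eqref{phi_ni} then follows by putting $\la=\la_{ni}$ in \eqref{varphifromtilde} (and applying $\frac{1}{j!}\partial_\la^{j}$ to handle multiple eigenvalues) and by reindexing the double sum over $(k,j)$ into a single sum over $k\ge1$; the coefficients collapse to $\tilde Q_{n,i;k,j}(x)=\tilde A_{k,j}(x,\la_{ni})$ by the very definition of these quantities.

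The convergence is the crux of the lemma and is where the principal difficulty lies. The bracketed sense \eqref{conv} is delivered by the symmetric exhaustion of the poles: since $\frac{1}{2\pi i}\oint_{\gamma_N}(P_{1k}(x,\mu)-\delta_{1k})/(\mu-\la)\,d\mu\to0$ uniformly for $\la$ in a compact set avoiding the $\la_{ni}$, the partial sums over the poles $\la_{k0},\la_{k1}$ with $k\le N$ converge uniformly on such sets. One cannot, however, dispense with the brackets: the residues at $\la_{k0}$ and at $\la_{k1}$ are individually $O(1)$, and after grouping the combined $k$-th term is only $O(\varkappa_k)$, where $\la_{k0}-\la_{k1}=\varkappa_k$ and $\alpha_{k0}-\alpha_{k1}=\varkappa_k$ with $\{\varkappa_k\}\in l_2\not\subset l_1$ by \eqref{la_asymp}, \eqref{alpha_asymp}, \eqref{tildesd}. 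Thus the series is not absolutely convergent, and keeping track of this cancellation while also controlling the extra term $\mathcal C(x)$ in the expansion of $\varphi^{[1]}$ (Proposition~\ref{prop1}) is the delicate point; it is precisely this obstruction that later forces the reformulation as the equation \eqref{main-eq} in the Banach space $m$, where an appropriate regrouping restores absolute convergence.
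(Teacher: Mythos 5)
Your proposal is correct and follows essentially the same route as the paper: contour integration of $P_{11}(x,\la)-1$ and $P_{12}(x,\la)$, whose poles are governed by $\hat M$ via \eqref{Phi} and \eqref{p11p12}, with the remainder integral killed by \eqref{p_as}, the residues assembling into $\tilde D$ and $\tilde A$ (Lemma~\ref{A}), and \eqref{phi_ni} obtained by setting $\la=\la_{ni}$ after differentiating for multiple eigenvalues. The only cosmetic differences are that the paper splits the circle into the contours $\Upsilon_N^{\pm}$ around the region $\Xi$ containing the spectrum and then invokes analytic continuation to get \eqref{varphifromtilde} on all of $\mathbb C$ (a step you should state, since you derive the identity only for $\la$ off the poles), and your heuristic that the ungrouped terms are $O(1)$ is slightly off (they are $O(1/k)$ for fixed $\la$), though your conclusion that convergence holds only in the bracketed sense \eqref{conv} is exactly the paper's.
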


\begin{proof}		
Define the regions
\begin{gather*}
\Xi = \{\la = \rho^2: 0 < \mbox{Im}\, \rho < \tau_1\}, \quad c_N = \Bigr\{\la: |\la| < \Bigr( N + \frac{1}{2} \Bigl)^2\Bigl\},  \quad N \in \mathbb N,
\end{gather*}
where $\tau_1$ is chosen so that $\la_{ni} \in \Xi$ for all $n \ge 1$, $i = 0, 1$. Denote by $\Upsilon^+_N$, $\Upsilon^-_N$, and $\Gamma_N$ the borders of the regions $(\Xi \cap c_N)$,  $(c_N \setminus \Xi)$, and $c_N$, respectively (see Fig.~\ref{img:cont}). The contours $\Upsilon^+_N$, $\Upsilon^-_N$, and $\Gamma_N$ are supposed to have the counter-clockwise circuit.

\begin{figure}[h!]
	\centering
	\begin{tikzpicture}[scale = 0.5]
	\draw[dashed] (-5, 0) edge (5, 0);
	\draw[dashed] (0, -5) edge (0, 5);
	\draw (0, 0) circle (4);
	\draw (5, 4) .. controls (-5, 1) and (-5,-1) .. (5, -4);
	\draw  (-1,3.2) node{$\Upsilon_N^-$};
	\draw (1,1.9) node{$\Upsilon_N^+$};
	\draw (4.7,0.5) node{$\Gamma_N$};
	\filldraw (-1, 0) circle(2pt);
	\filldraw (1, 0) circle(2pt);
	\filldraw (3, 0) circle(2pt);
	\end{tikzpicture}
\caption{Contours}
\label{img:cont}
\end{figure}

Note that the functions $P_{11}(x, \la)$ and $P_{12}(x, \la)$ given by \eqref{p11p12} have the poles $\{ \la_{ni}\}_{n \ge 1, \, i = 0, 1}$ for each fixed $x \in [0, \pi]$.
Due to the Cauchy integral formula, we get for $\la \not\in \Xi$ and for a sufficienty large integer $N$ that
\begin{gather*}
P_{11}(x, \la) - 1 = -\frac{1}{2\pi i} \oint_{\Upsilon^-_N} {\frac{P_{11}(x, \mu) - 1}{\la - \mu}d\mu} = \frac{1}{2\pi i} \oint_{\Upsilon^+_N} {\frac{P_{11}(x, \mu)}{\la - \mu}d\mu} -\frac{1}{2\pi i} \oint_{\Gamma_N} {\frac{P_{11}(x, \mu) - 1}{\la - \mu}d\mu}, \\
P_{12}(x, \la) = -\frac{1}{2\pi i} \oint_{\Upsilon^-_N} {\frac{P_{12}(x, \mu)}{\la - \mu}d\mu} = \frac{1}{2\pi i} \oint_{\Upsilon^+_N} {\frac{P_{12}(x, \mu)}{\la - \mu}d\mu} -\frac{1}{2\pi i} \oint_{\Gamma_N} {\frac{P_{12}(x, \mu)}{\la - \mu}d\mu}.
\end{gather*}

Then, using \eqref{phi_from_p}), we obtain
\begin{gather} \label{int_varphi}
\varphi(x, \la) = \tilde \varphi(x, \la) + \frac{1}{2\pi i} \oint_{\Upsilon^+_N} { \frac{P_{11}(x, \mu) \tilde \varphi(x, \la) + P_{12}(x, \mu) \tilde \varphi^{[1]}(x, \la)}{\la - \mu}d\mu} + \varepsilon_N(x, \la),
\end{gather}
where
\begin{gather*}
\varepsilon_N(x, \la) = - \frac{1}{2\pi i} \oint_{\Gamma_N} { \frac {(P_{11}(x, \mu) - 1)\tilde \varphi(x, \la) + P_{12}(x, \mu) \tilde \varphi^{[1]}(x, \la)}{\la - \mu}d\mu}.
\end{gather*}

Due to \eqref{p_as}, we have $P_{11}(x, \la) = 1 + o(1)$ and $P_{12}(x, \la) = o(1)$, so $\varepsilon_N(x, \la) \rightarrow 0$, $N \rightarrow \infty$.

Substituting \eqref{Phi} and \eqref{p11p12} into \eqref{int_varphi}, we deduce
\begin{gather} \label{int_varphi3}
\varphi(x, \la) = \tilde \varphi(x, \la) + \frac{1}{2\pi i} \oint_{\Upsilon^+_N} { \tilde \varphi(x, \mu)  \tilde A (x, \la, \mu) d\mu} + \varepsilon_N(x, \la),
\end{gather}

Let us represent $\tilde A(x, \la, \mu)$ in \eqref{int_varphi3} by Lemma~\ref{A}, calculate the integral in \eqref{int_varphi3} by the Residue theorem, and pass to the limit as $N \to \infty$. Then, we arrive at \eqref{varphifromtilde} for $\la \not\in \Xi$. By using the spectral data asymptotics \eqref{la_asymp} and \eqref{alpha_asymp}, one can show that the series in \eqref{varphifromtilde} converges in the sense \eqref{conv} uniformly with respect to $\la$ on compact sets. By analytical continuation, we conclude that the relation \eqref{varphifromtilde} holds for all $\la \in \mathbb C$. Putting $\la = \la_{ni}$ and previously differentiating the both sides of \eqref{varphifromtilde} for multiple eigenvalues, we obtain \eqref{phi_ni}.
\end{proof}

In the same way, we can get the formula
\begin{gather} \label{Phifromtilde}
\tilde \Phi (x, \la) = \Phi (x, \la) + \sum_{i=0}^1 (-1)^i \sum_{k \in I_i} \sum_{j=0}^{m_{k_i}-1} \tilde B_{k+j, i}(x, \la) \varphi_{k+j, i}(x),
\end{gather}
where $\tilde B(x, \la, \mu) = \dfrac {\langle \tilde \Phi(x, \la), \tilde \varphi(x, \mu)\rangle}{\la - \mu} \hat M(\mu)$ and the series converges in the sense \eqref{conv} uniformly with respect to $\la$ on compact sets in $\mathbb C \setminus \{ \la_{ni}\}_{n \ge 1, \, i = 0, 1}$.

Next, proceed to the derivation of the main equation.
Denote
\begin{gather*}
\xi_n = |\rho_n - \tilde \rho_n| + |\alpha_n - \tilde \alpha_n|, \quad
\chi_{n} = \left\{\begin{array}{ll}
\xi_n^{-1}, \quad & \xi_n \neq 0,\\
0, \quad & \xi_n = 0,
\end{array}\right. \quad n \ge 1.
\end{gather*}

By virtue of the spectral data asymptotics \eqref{la_asymp} and \eqref{alpha_asymp}, we have $\{ \xi_n \} \in l_2$.

Let $V$ be the set of indexes $u = (n, i)$, where $n \ge 1$, $i=0, 1$. Define the vector $\psi(x) = (\psi_v(x))_{v \in V}$, where
\begin{gather} \label{psimain}
	\begin{pmatrix}
	\psi_{n0}(x)\\
	\psi_{n1}(x)
	\end{pmatrix} =
	\begin{pmatrix}
	\chi_{n} & -\chi_{n}\\
	0 & 1
	\end{pmatrix}
	\begin{pmatrix}
	\varphi_{n,0}(x)\\
	\varphi_{n,1}(x)
	\end{pmatrix}.
\end{gather}

Analogously, define $\tilde \psi(x)$ replacing $\varphi_{n,i}(x)$ by $\tilde \varphi_{n,i}(x)$. Furthermore, define the infinite matrix $\tilde H(x) = (\tilde H_{u,v}(x))_{u,v \in V}$:
\begin{gather} \label{Hmatr}
	\begin{pmatrix}
	\tilde H_{n0, k0}(x) & \tilde H_{n0, k1}(x)\\
	\tilde H_{n1, k0}(x) & \tilde H_{n1, k1}(x)
	\end{pmatrix} =
	\begin{pmatrix}
	\chi_{n} & - \chi_{n}\\
	0 & 1
	\end{pmatrix}
	\begin{pmatrix}
	\tilde Q_{n,0; k,0}(x) & \tilde Q_{n,0; k,1}(x)\\
	\tilde Q_{n,1; k,0}(x) & \tilde Q_{n,1; k,1}(x)
	\end{pmatrix}
	\begin{pmatrix}
	\xi_{k} & 1\\
	0 & -1
	\end{pmatrix}.
\end{gather}

Using Proposition~\ref{prop1}, \eqref{la_asymp}, \eqref{asymptDelta} and \eqref{alpha_asymp}, we can get the estimates
\begin{gather} \label{uneqs}
|\psi_{ni}(x)| \le C, \quad |\tilde \psi_{ni}(x)| \le C, \quad
|\tilde H_{ni, kj}(x)| \le \dfrac{C \xi_k}{|n-k|+1}, \quad (n,i), \, (k,j) \in V,
\end{gather}
where $C$ is a positive constant independent of $n$, $i$, $k$, $j$, and $x$.

Let us consider the Banach space $m$ of bounded infinite sequences $a = (a_v)_{v \in V}$ with the norm $\| a \|_m = \sup_{v} |a_v|$.
It follows from \eqref{uneqs} that, for each fixed $x \in [0, \pi]$,
$\psi(x)$ and $\tilde \psi(x)$ belong to $m$, $\tilde H(x)$ is a linear bounded operator from $m$ to $m$ and
\begin{gather*}
\|\tilde H(x)\|_{m \to m} \le C \sup_n \sum_k \dfrac{\xi_k}{|n - k| + 1} < \infty.
\end{gather*}

Recall that $E$ is the identity operator in $m$.

\begin{thm}\label{main_equation_thm}
For each fixed $x \in [0, \pi]$, vector $\psi(x) \in m$ satisfies the equation
\begin{gather}\label{main_equation}
\tilde \psi(x) = (E + \tilde H(x)) \psi(x)
\end{gather}
in the Banach space $m$. Equation \eqref{main_equation} is called $\bold {the\ main}$ $\bold {equation}$ of the inverse problem.
\end{thm}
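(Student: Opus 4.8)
The plan is to derive the main equation \eqref{main_equation} directly from the infinite linear system \eqref{phi_ni} by performing the left- and right-multiplications that are built into the definitions \eqref{psimain} and \eqref{Hmatr}. First I would rewrite \eqref{phi_ni} in $2\times 2$ block form, pairing the indices $(n,0)$ and $(n,1)$. Introducing the column vector $\Phi_n(x) = (\varphi_{n,0}(x), \varphi_{n,1}(x))^T$, the $2\times 2$ block $\mathcal Q_{n,k}(x)$ with entries $\tilde Q_{n,i;k,j}(x)$, and the sign matrix $D = \mathrm{diag}(1,-1)$, one checks that \eqref{phi_ni} reads $\tilde\Phi_n(x) = \Phi_n(x) + \sum_{k\ge 1}\mathcal Q_{n,k}(x)\, D\,\Phi_k(x)$, with convergence understood in the bracketed sense \eqref{conv}.

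Next I would multiply both sides on the left by $T_n = \left(\begin{smallmatrix}\chi_n & -\chi_n\\ 0 & 1\end{smallmatrix}\right)$. By \eqref{psimain} the left-hand side becomes $(\tilde\psi_{n0}, \tilde\psi_{n1})^T$ and the leading term on the right becomes $(\psi_{n0}, \psi_{n1})^T$. For the series, the key algebraic step is to replace the sign matrix $D$ acting on $\Phi_k(x)$ by $R_k T_k$, where $R_k = \left(\begin{smallmatrix}\xi_k & 1\\ 0 & -1\end{smallmatrix}\right)$, since $\psi_k(x) = T_k\Phi_k(x)$; then $T_n \mathcal Q_{n,k} R_k\, T_k\Phi_k$ is exactly the $k$-th block contribution $\tilde H_{n\cdot,k\cdot}(x)\,(\psi_{k0},\psi_{k1})^T$ by \eqref{Hmatr}, and summing over $k$ yields \eqref{main_equation} in block form.

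The justification of the substitution of $R_k T_k$ for $D$ is the crux and must be done case by case. A direct computation gives $R_k T_k = \left(\begin{smallmatrix}\xi_k\chi_k & 1-\xi_k\chi_k\\ 0 & -1\end{smallmatrix}\right)$. When $\xi_k\neq 0$ we have $\chi_k = \xi_k^{-1}$, so $\xi_k\chi_k = 1$ and $R_k T_k = D$ identically. When $\xi_k = 0$ the two matrices differ, but then $\rho_k = \tilde\rho_k$ forces $\la_{k0} = \la_{k1}$, whence $\varphi_{k,0}(x) = \varphi_{k,1}(x)$; on a vector with equal components both $D$ and $R_k T_k$ produce $(\varphi_{k,0}(x), -\varphi_{k,0}(x))^T$, so $R_k T_k\,\Phi_k(x) = D\,\Phi_k(x)$ in either case. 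Hence the two series are termwise equal. For the finitely many multiple eigenvalues the same block identity persists once the Taylor-coefficient rows are included; this requires only bookkeeping, not new ideas.

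Finally I would settle the convergence. The bracketed convergence of \eqref{phi_ni} is by itself insufficient to assert \eqref{main_equation} as an identity in $m$; here the weights $\chi_n$ and $\xi_k$ do the essential work. By the estimates \eqref{uneqs}, $\psi(x),\tilde\psi(x)\in m$ and $\tilde H(x)$ is a bounded operator on $m$ with $\|\tilde H(x)\|_{m\to m}\le C\sup_n\sum_k \xi_k/(|n-k|+1)<\infty$, so the series $\tilde H(x)\psi(x)$ converges absolutely in the norm of $m$. Consequently the grouping of terms becomes immaterial, the bracketed series passes into an $m$-convergent one, and \eqref{main_equation} holds in $m$ for each fixed $x\in[0,\pi]$. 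I expect the case analysis for $\xi_k = 0$, where $T_k$ degenerates and $R_k T_k \neq D$, together with the multiplicity bookkeeping, to be the only genuinely delicate points; everything else is linear algebra combined with the a priori bounds \eqref{uneqs}.
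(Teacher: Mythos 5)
Your proposal is correct and takes essentially the same approach as the paper: the paper's proof likewise substitutes the definitions \eqref{psimain} and \eqref{Hmatr} into the system \eqref{phi_ni} and verifies the resulting identity termwise, with membership in $m$ and absolute convergence supplied by the estimates \eqref{uneqs}. You merely make explicit the matrix identity $R_k T_k = D$ and the degenerate case $\xi_k = 0$ (where $\varphi_{k,0} = \varphi_{k,1}$), which the paper compresses into ``it is not difficult to notice.''
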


\begin{proof}
By construction, we have the following relations:
\begin{gather*}
\tilde \psi_{n0} = \chi_n\left(\varphi_{n,0} - \varphi_{n,1} + \sum_k (\tilde Q_{n,0; k,0} \varphi_{k,0} - \tilde Q_{n,0; k,1} \varphi_{k,1} - \tilde Q_{n,1; k,0} \varphi_{k,0} + \tilde Q_{n,1; k,1} \varphi_{k,1})\right), \\
\tilde \psi_{n1} = \varphi_{n,1} + \sum_k (\tilde Q_{n,1; k,0} \varphi_{k,0} - \tilde Q_{n,1; k,1} \varphi_{k,1}).
\end{gather*}

It is not difficult to notice that
\begin{gather*}
\tilde \psi_{ni} = \psi_{ni} +  \sum_{k, j} \tilde H_{ni, kj}(x)\psi_{kj}, \quad (n, i), (k, j) \in V,
\end{gather*}
which is equivalent to \eqref{main_equation}.
\end{proof}

\begin{thm}\label{main_equation_thm_solvability}
For each fixed $x \in [0, \pi]$, the operator $(E + \tilde H(x))$ has a bounded inverse operator. Therefore, equation \eqref{main_equation} is uniquely solvable.
\end{thm}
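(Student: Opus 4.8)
The plan is to prove invertibility by exhibiting a two-sided inverse of $E + \tilde H(x)$ explicitly, which is cleaner than an abstract argument alone, though I will also note the Fredholm route. First I would record that, for each fixed $x \in [0,\pi]$, the operator $\tilde H(x)$ is compact on $m$. Indeed, truncating $\tilde H(x)$ to its first $K$ columns gives finite-rank operators, and by \eqref{uneqs} their distance to $\tilde H(x)$ in the $m \to m$ norm is at most $C\sup_n \sum_{k > K} \xi_k/(|n-k|+1)$; since $\{\xi_k\} \in l_2$, the Cauchy--Schwarz inequality shows this tends to $0$ as $K \to \infty$. Hence $\tilde H(x)$ is a norm limit of finite-rank operators and is therefore compact.

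Second, I would construct the dual main equation by interchanging the roles of $\mathcal L$ and $\tilde{\mathcal L}$. Repeating the contour-integration argument that produced \eqref{varphifromtilde} and \eqref{phi_ni}, but now expressing $\varphi(x,\la)$ through $\tilde\varphi(x,\la)$, yields
\[
\varphi(x,\la) = \tilde\varphi(x,\la) + \sum_{i=0}^1 (-1)^i \sum_{k \in I_i} \sum_{j=0}^{m_{ki}-1} A_{k+j,i}(x,\la)\,\tilde\varphi_{k+j,i}(x),
\]
where now $A(x,\la,\mu) = D(x,\la,\mu)(-\hat M(\mu))$ with $D(x,\la,\mu) = \langle \varphi(x,\la),\varphi(x,\mu)\rangle/(\la-\mu)$ built from the genuine problem $\mathcal L$ (the sign change reflects that $\hat M = M - \tilde M$ becomes $\tilde M - M$ under the swap). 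Setting $Q_{n,i;k,j}(x) = A_{k,j}(x,\la_{n,i})$ and assembling $H(x)$ by the recipe \eqref{Hmatr} with $\varphi$ and $\tilde\varphi$ interchanged, the argument of Theorem~\ref{main_equation_thm} gives the companion identity $\psi(x) = (E + H(x))\tilde\psi(x)$; moreover $H(x)$ satisfies the same bound \eqref{uneqs} and is thus bounded and compact on $m$.

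Third, and this is the crux, I would establish the two-sided operator identity
\[
(E + \tilde H(x))(E + H(x)) = (E + H(x))(E + \tilde H(x)) = E
\]
as a genuine identity in the algebra of bounded operators on $m$, not merely applied to the single vectors $\psi(x)$ and $\tilde\psi(x)$. The mechanism is to substitute the dual representation of each $\varphi_{k+j,i}(x)$ into \eqref{varphifromtilde} (and conversely): since both \eqref{varphifromtilde} and its dual are identities in the continuous parameter $\la$ and in $x$, equating coefficients of the functions $\{\tilde\varphi_{k,j}(x)\}$ produces reciprocity relations among the kernels $\tilde Q_{n,i;k,j}$ and $Q_{n,i;k,j}$ that hold for \emph{all} index pairs in $V$. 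Passing these relations through the triangular transformations \eqref{psimain}--\eqref{Hmatr} (the factors $\chi_n$, $\xi_k$) converts them into the desired operator identities. The main obstacle is precisely here: the series in \eqref{varphifromtilde} converges only ``with brackets'', so the substitutions and the resulting double sums $\sum_{l,s}\tilde H_{ni,ls}(x)\,H_{ls,kj}(x)$ must be shown to converge absolutely before the rearrangements are legitimate. This is guaranteed by the spectral-data asymptotics \eqref{la_asymp}, \eqref{alpha_asymp}, the membership $\{\xi_k\}\in l_2$, and the bound \eqref{uneqs}, which together bound the double sums and justify all interchanges of summation.

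Once the operator identity is in hand, $E + H(x)$ is a two-sided, bounded inverse of $E + \tilde H(x)$, so the main equation \eqref{main_equation} is uniquely solvable for each fixed $x \in [0,\pi]$. Alternatively, having proved compactness of $\tilde H(x)$ in the first step, one may invoke the Fredholm alternative and reduce to showing that $(E + \tilde H(x))\beta = 0$ forces $\beta = 0$; this follows from the same reciprocity relation by applying $E + H(x)$ to such a $\beta$. Either way, the reciprocity computation is the decisive technical ingredient.
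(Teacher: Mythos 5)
Your plan has the same architecture as the proof the paper appeals to (the paper omits the argument, saying it is ``proved analogously to Theorem~2 in \cite{FrYu}''): compactness of $\tilde H(x)$ via finite-rank column truncations with the tail bound $\|\tilde H(x)-\tilde H^K(x)\|_{m\to m}\le C\bigl(\sum_{k>K}\xi_k^2\bigr)^{1/2}$ — this is literally the computation the paper performs later in Lemma~\ref{lem:solveHK} — together with the symmetric operator $H(x)$ obtained by interchanging $\mathcal L$ and $\tilde{\mathcal L}$ (with $\hat M$ replaced by $-\hat M$) and the two-sided identities $(E+\tilde H(x))(E+H(x))=(E+H(x))(E+\tilde H(x))=E$. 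That is indeed the standard route.

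However, your justification of the decisive reciprocity relations does not work. For each \emph{fixed} $x$, substituting the dual series into \eqref{varphifromtilde} produces a scalar identity in $\la$ of the form $0=\sum_{k,j}c_{kj}(\la)\,\tilde\varphi_{k,j}(x)$, in which the $\tilde\varphi_{k,j}(x)$ are merely complex numbers, not independent coordinates, and the coefficients $c_{kj}(\la)$ are \emph{entire} in $\la$: the functions $\tilde A_{k,j}(x,\la)$ and $A_{k,j}(x,\la)$ are residues already taken in $\mu$, so no poles in $\la$ remain whose principal parts could be matched. Hence there is no ``equating coefficients of $\{\tilde\varphi_{k,j}(x)\}$'' available — the identity simply does not determine the individual brackets, and this is exactly the step where your argument breaks. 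The relations you want, of the shape
\begin{equation*}
\tilde Q_{n,i;k,j}(x)-Q_{n,i;k,j}(x)+\sum_{l\ge1}\sum_{s=0}^{1}(-1)^{s}\,\tilde Q_{n,i;l,s}(x)\,Q_{l,s;k,j}(x)=0
\end{equation*}
(up to the sign bookkeeping of Section~\ref{sec:maineq}), are true, but in \cite{FrYu} and in the analogous machinery of \cite{FY01} they are obtained by a \emph{separate} contour-integration argument at the level of the two-parameter kernels: by the same Cauchy-formula technique that yielded \eqref{varphifromtilde} (integration over $\Upsilon_N^+$ with $N\to\infty$, the series understood ``with brackets''), one first proves
\begin{equation*}
\tilde D(x,\la,\mu)-D(x,\la,\mu)=\frac{1}{2\pi i}\oint_{\Upsilon_N^+}\tilde D(x,\la,\xi)\,\hat M(\xi)\,D(x,\xi,\mu)\,d\xi+\varepsilon_N(x,\la,\mu),\qquad \varepsilon_N\to0,
\end{equation*}
and then takes residues in $\la$ at $\la_{ni}$ and in $\mu$ at $\la_{kj}$; passing through \eqref{psimain}--\eqref{Hmatr} this gives $\tilde H-H+\tilde H H=0$ and $\tilde H-H+H\tilde H=0$, hence the two-sided inverse. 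Your absolute-convergence observations for the double sums are correct and necessary, but they only license rearrangements \emph{after} this kernel identity is established; they cannot replace its derivation. Note also that your Fredholm-alternative fallback does not repair the gap, since the injectivity argument you sketch invokes the same unproven reciprocity.
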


Theorem~\ref{main_equation_thm_solvability} is proved analogously to Theorem~2 in \cite{FrYu}, so we omit the proof.

\section{Reconstruction formulas} \label{sec:solution}

In this section, we prove Theorem~\ref{mainthm} on the reconstruction formulas for the coefficients $\sigma(x)$, $r_1(\la)$, and $r_2(\la)$ of the problem $\mathcal L$. First, we prove the convergence of the series and the products in Theorem~\ref{mainthm}. Then, we have to show that the right-hand sides of the formulas \eqref{sigma_series}--\eqref{r2_series} equal to the coefficients $\sigma(x)$, $r_1(\la)$, and $r_2(\la)$ of $\mathcal L$. For this purpose, we use the approximation approach. We consider the finite spectral data $\{ \la_n, \alpha_n \}_{n = 1}^K$ and the corresponding main equation for sufficiently large $K$. It is proved that the main equation is uniquely solvable. Furthermore, we construct the solutions $\varphi^K(x, \la)$ and $\Phi^K(x, \la)$ by the finite data $\{ \la_n, \alpha_n \}_{n = 1}^K$, study their properties, and then build the functions $\sigma^K(x)$, $r_1^K(\la)$, and $r_2^K(\la)$. We prove their convergence to $\sigma(x)$, $r_1(\la)$, and $r_2(\la)$ in the corresponding spaces. Finally, we arrive at the validity of the formulas \eqref{sigma_series}--\eqref{r2_series}.

The proof of Theorem~\ref{mainthm} consists of Lemmas~\ref{lem:sigma}--\ref{spectraldatacon}.

\begin{lem} \label{lem:sigma}
The series \eqref{sigma_series} converges in $L_2(0, \pi)$.
\end{lem}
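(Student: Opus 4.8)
The plan is to reduce the claim to the $L_2$-convergence of the tail series in \eqref{sigma_series}, since the contour integral over $\Gamma_N$ is a fixed element of $L_2(0,\pi)$: by the Residue theorem it equals a finite combination of products of the functions $\varphi(\cdot,\la_{ki})$, each of which lies in $L_2(0,\pi)$. Thus it suffices to show that $\sum_{k=N+1}^\infty \sum_{j=0}^1 (-1)^j \alpha_{kj}\bigl(\tilde\varphi_{k,j}(x)\varphi_{k,j}(x)-\tfrac12\bigr)$ converges in $L_2(0,\pi)$. For $k>N$ all eigenvalues $\la_{ki}$ are simple, so $\varphi_{k,i}(x)=\varphi(x,\la_{ki})$ and $\tilde\varphi_{k,i}(x)=\cos\rho_{ki}x$. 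First I would insert the representation of $\varphi(x,\la)$ from Proposition~\ref{prop1} and use $\cos^2\rho x=\tfrac12(1+\cos2\rho x)$ to write
$$w_{ki}(x):=\tilde\varphi_{k,i}(x)\varphi_{k,i}(x)-\tfrac12=\tfrac12\cos2\rho_{ki}x+\cos\rho_{ki}x\int_0^x\mathcal{K}_1(x,t)\cos\rho_{ki}t\,dt .$$
The role of the constant $\tfrac12$ in \eqref{sigma_series} is exactly to remove the non-decaying $x$-independent part of $\cos^2\rho_{ki}x$, so that each $w_{ki}$ has a purely oscillatory leading term; without it the coefficients $\alpha_{kj}\to\tfrac{2}{\pi}\neq0$ would obstruct convergence.

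The second step is to exploit the cancellation built into the pairing over $j=0,1$. Writing $\alpha_{k0}w_{k0}-\alpha_{k1}w_{k1}=\alpha_{k1}(w_{k0}-w_{k1})+(\alpha_{k0}-\alpha_{k1})w_{k0}$, I would treat the two groups separately, the point being that both carry a factor $\xi_k$. Indeed, from \eqref{psimain} and the definitions of $\chi_n$, $\xi_n$ one has $\varphi_{k,0}-\varphi_{k,1}=\xi_k\psi_{k0}(x)$ with $\{\psi_{k0}\}$ bounded, while $\alpha_{k0}-\alpha_{k1}$ and $\rho_{k0}-\rho_{k1}$ are $O(\xi_k)$, and $\{\xi_k\}\in l_2$ by \eqref{la_asymp}, \eqref{alpha_asymp}. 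Hence $w_{k0}-w_{k1}=\xi_k g_k(x)$ and $(\alpha_{k0}-\alpha_{k1})w_{k0}=\xi_k h_k(x)$ for families $\{g_k\},\{h_k\}$ of uniformly $L_2$-bounded functions; for the leading oscillatory part of $w_{k0}-w_{k1}$ one uses $\cos2\rho_{k0}x-\cos2\rho_{k1}x=-2\sin((\rho_{k0}+\rho_{k1})x)\sin((\rho_{k0}-\rho_{k1})x)$ and factors out $\rho_{k0}-\rho_{k1}=O(\xi_k)$.

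The final step is to pass from these $l_2$-coefficients to $L_2$-convergence via a Bessel-sequence estimate. Because the frequencies $\rho_{ki}$ are $l_2$-close to the lattice $k-M_1-1$ (by \eqref{la_asymp}), the systems $\{\cos2\rho_{ki}x\}$ and $\{\sin((\rho_{k0}+\rho_{k1})x)\}$ are Riesz bases of their closed spans in $L_2(0,\pi)$, while the remainder terms generated by the kernel $\mathcal{K}_1\in\mathcal{D}_{0,\pi}$ form Bessel systems; multiplication by the uniformly bounded, slowly varying factors preserves the Bessel property. Consequently $\bigl\|\sum_{k}\xi_k g_k\bigr\|_{L_2}^2\le C\sum_k\xi_k^2<\infty$, and likewise for $\sum_k\xi_k h_k$, so the partial sums form a Cauchy sequence in $L_2(0,\pi)$, which gives the desired convergence. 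I expect the main obstacle to be precisely this last step: establishing the uniform Bessel bounds for the product systems built from the merely square-integrable kernel $\mathcal{K}_1$ and verifying that the bounded multipliers do not destroy the Bessel property. This is where the $l_2$-structure of $\{\xi_k\}$ must be combined with Riesz-basis perturbation theory, rather than with a crude termwise norm estimate, which would only yield $l_1$-type bounds and fail.
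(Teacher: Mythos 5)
Your proposal is correct and follows essentially the same route as the paper: reduce the claim to the tail series, expand $\varphi_{k,j}$ and $\tilde\varphi_{k,j}$ via Proposition~\ref{prop1} (with $\tilde\varphi(x,\la)=\cos\rho x$), use the subtracted $\tfrac12$ together with the pairing over $j=0,1$ to extract the $l_2$-small factors $\varkappa_k$ and $\varepsilon_k=\alpha_{k0}-\alpha_{k1}$, and conclude $L_2$-convergence from $l_2$-coefficients against Bessel-type systems. The only real difference is the final step, where the paper sidesteps the obstacle you flag (Riesz-basis perturbation theory; note your blanket claim that per-index multiplication by different bounded factors preserves the Bessel property is false in general) by Taylor-expanding all frequencies around the integers, reducing the series to $\sum_k \bigl(-\tfrac{2}{\pi}\varkappa_k x\sin 2kx+\tfrac12\varepsilon_k\cos 2kx+\tfrac{2}{\pi}(\delta_x(k+\varkappa_k)-\delta_x(k))+\theta_k(x)\bigr)$, so that only fixed orthogonal systems, the single kernel estimate $\delta_x(k+\varkappa_k)-\delta_x(k)=-\int_0^x\mathcal{K}(x,t)\varkappa_k t\sin kt\,dt+\theta_k(x)$, and uniformly absolutely convergent remainders $\theta_k(x)$ are needed.
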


\begin{proof}
Consider the convergence of the series
\begin{gather}\label{serlem}
\mathscr S(x) :=  \sum_{k=N+1}^\infty \sum_{j=0}^1 (-1)^j\alpha_{kj}\Bigg(\tilde \varphi_{k,j}(x)\varphi_{k,j}(x) - \frac{1}{2}\Bigg),
\end{gather}
where $N$ is so large that all the multiple eigenvalues lie inside the circle $\Gamma_N$.

In this proof, we use the notation $\{ \theta_k(x) \}$ for various sequences such that the series $\sum_k |\theta_k(x)|$ converges uniformly with respect to $x \in [0,\pi]$, and $\delta_x(k) = \int_0^x {\mathcal{K}(x, t)\cos kt}dt$, where $\mathcal{K}(x, t) \in \mathcal{D}_{0, \pi}$ (the class $\mathcal{D}_{0, \pi}$ is defined in Proposition~\ref{prop1}).

From Proposition~\ref{prop1}, \eqref{la_asymp}, \eqref{tildesd}, and the relation $\tilde \varphi(x,\la) = \cos \rho x$, we get
\begin{gather} \label{varphik}
\varphi_{k,0}(x) = \cos kx - \varkappa_kx\sin kx + \delta_x(k+\varkappa_k) + \theta_k(x), \quad \varphi_{k,1}(x) = \cos kx + \delta_x(k), \\ \label{tildevarphik}
\tilde \varphi_{k,0}(x) = \cos kx - \varkappa_kx\sin kx + \theta_k(x), \quad \tilde \varphi_{k,1}(x) = \cos kx.
\end{gather}

Recall that $\alpha_{k,1} = \frac{2}{\pi}$ for $k > N$. Put $\varepsilon_n := \alpha_{n,0} - \alpha_{n,1}$. It follows from \eqref{alpha_asymp} that $\{ \varepsilon_n\} \in l_2$.
Thus, substituting \eqref{varphik} and \eqref{tildevarphik} into \eqref{serlem} and simplifying the result, we obtain
\begin{gather*}
\mathscr S(x) = \sum_{k=N+1}^\infty \left(-\frac{2}{\pi}\varkappa_kx\sin2kx + \frac{1}{2}\varepsilon_k\cos2kx + \frac{2}{\pi}(\delta_x(k+\varkappa_k) - \delta_x(k)) + \theta_k(x)\right).
\end{gather*}

 As $\{\varepsilon_k\}, \{\varkappa_k\} \in l_2$ and 
 $$
 \delta_x(k+\varkappa_k) - \delta_x(k) = -\int_0^x {\mathcal{K}(x, t) \varkappa_kt\sin kt} dt + \theta_k(x),
 $$ 
 we get that the series $\mathscr S(x)$ converges in $L_2(0, \pi)$.
\end{proof}

\begin{lem} \label{lem:conv_of_r}
The series and the products in \eqref{r1_series} and \eqref{r2_series} converge absolutely and uniformly on compact sets $|\la| \le C$, $|\la - \la_{ki}| \ge \delta$, $C, \delta > 0$.
\end{lem}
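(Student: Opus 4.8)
The plan is to treat separately the three kinds of objects appearing in \eqref{r1_series}--\eqref{r2_series}: the contour integrals over $\Gamma_N$, the infinite products $\prod_{k=M_1+1}^{\infty}\frac{\la-\la_{k0}}{\la-\la_{k1}}$, and the infinite series in $k$. The contour integrals cause no difficulty: $\Gamma_N$ is a fixed finite contour, the integrands are analytic in $\mu$ on it, and the only $\la$-dependence enters through $(\la-\mu)^{-1}$ with $\mu\in\Gamma_N$; hence for $\la$ on a compact set with $N$ large these integrals are analytic and bounded in $\la$. Thus it remains to establish the convergence of the products and of the series.

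For the products I would write each factor as
\[
\frac{\la-\la_{k0}}{\la-\la_{k1}}=1+\frac{\la_{k1}-\la_{k0}}{\la-\la_{k1}}.
\]
By the eigenvalue asymptotics \eqref{la_asymp} and \eqref{tildesd}, $\rho_{k0}=k-M_1-1+\varkappa_k$ and $\rho_{k1}=k-M_1-1$, so $\la_{k1}-\la_{k0}=\rho_{k1}^2-\rho_{k0}^2=-2(k-M_1-1)\varkappa_k-\varkappa_k^2=O(k\varkappa_k)$. On a set $|\la|\le C$, $|\la-\la_{k1}|\ge\delta$ one has $|\la-\la_{k1}|\asymp k^2$ for large $k$, so the general factor is $1+O(\varkappa_k/k)$. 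Since $\{\varkappa_k\}\in l_2$, the Cauchy--Schwarz inequality gives $\sum_k|\varkappa_k|/k<\infty$, and the standard criterion for infinite products yields their absolute and uniform convergence on the indicated sets.

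For the series I would use Proposition~\ref{prop1} together with $\tilde\varphi(x,\la)=\cos\rho x$ to obtain pointwise asymptotics at $x=\pi$. The decisive observation is that $\tilde\varphi'_{k,0}(\pi)=-\rho_k\sin\rho_k\pi$, and since $\sin\rho_k\pi=(-1)^{k-M_1-1}\sin(\pi\varkappa_k)=O(\varkappa_k)$, this factor is only $O(k\varkappa_k)$ instead of the generic $O(k)$. Combined with $\alpha_{k0}=O(1)$, $\varphi_{k,0}(\pi)=O(1)$ and $|\la-\la_{k0}|\asymp k^2$, the $k$-th term of the $\la$-dependent series in \eqref{r1_series} is $O(\varkappa_k/k)$, hence summable; the same factor controls the $\la$-dependent series in \eqref{r2_series}, where $\varphi^{[1]}_{k,0}(\pi)=O(1+k\varkappa_k)$ and each term is $O(\varkappa_k^2+\varkappa_k/k)$.

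The hard part will be the two $\la$-independent sums in \eqref{r2_series}, such as $\sum_{k}\sum_{j=0}^1(-1)^j\alpha_{kj}(\tilde\varphi_{k,j}(\pi)\varphi_{k,j}(\pi)-1)$, because there the individual terms are only $O(\varkappa_k)\in l_2$, which is insufficient for absolute convergence. Here I would mimic the cancellation argument used in the proof of Lemma~\ref{lem:sigma}: keeping the $j=0$ and $j=1$ contributions together, write $\varphi_{k,0}(\pi)=\tilde\varphi_{k,0}(\pi)+a_k$ and $\varphi_{k,1}(\pi)=\tilde\varphi_{k,1}(\pi)+b_k$ with $a_k=\int_0^\pi\mathcal K_1(\pi,t)\cos\rho_k t\,dt$ and $b_k=\int_0^\pi\mathcal K_1(\pi,t)\cos\tilde\rho_k t\,dt$. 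Using $\cos\tilde\rho_k\pi=(-1)^{k-M_1-1}$ exactly and $\cos\rho_k\pi=(-1)^{k-M_1-1}+O(\varkappa_k^2)$, the paired term reduces, up to an $l_1$ remainder, to $\tfrac{2}{\pi}(-1)^{k-M_1-1}(a_k-b_k)$; a Taylor expansion in $\varkappa_k$ then gives $a_k-b_k=-\varkappa_k\int_0^\pi\mathcal K_1(\pi,t)\,t\sin\tilde\rho_k t\,dt+O(\varkappa_k^2)$, a product of two $l_2$ sequences, so that $\{a_k-b_k\}\in l_1$. This yields the absolute convergence of the $\la$-independent sums, and collecting the three parts establishes the assertion.
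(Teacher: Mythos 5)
Your proposal is correct and follows essentially the same route as the paper's proof: the products are handled via the factorization $1+(\la_{k1}-\la_{k0})/(\la-\la_{k1})=1+O(\varkappa_k/k)$, the $\la$-dependent series via the key smallness $\tilde\varphi'_{k,0}(\pi)=O(k\varkappa_k)$ combined with $|\la-\la_{k0}|\ge ck^2$, and the $\la$-independent sum via pairing the $j=0$ and $j=1$ terms so that the difference of Fourier-type coefficients (the paper's $\delta_\pi(k+\varkappa_k)-\delta_\pi(k)$, your $a_k-b_k$) becomes a product of two $l_2$ sequences, hence $l_1$. Your explicit Cauchy--Schwarz justifications merely make precise estimates the paper leaves implicit.
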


\begin{proof}
At first consider the products. As a finite product does not affect the convergence, then we consider only the product
\begin{gather*}
\mathscr P(x) := \prod_{k = M_1+1}^{\infty} \frac{\la - \la_{k0}}{\la - \la_{k1}} = \prod_{k = M_1 + 1}^{\infty} \left( 1 + \frac{\la_{k1} - \la_{k0}}{\la - \la_{k1}} \right).
\end{gather*}

From \eqref{la_asymp} and \eqref{tildesd} we get $\la_{k0} = (k + \varkappa_k)^2$, $\la_{k1} = k^2$. Hence
$$
\frac{\la_{k1} - \la_{k0}}{\la - \la_{k1}} \: \sim \: \frac{\varkappa_k}{k}.
$$

Therefore, the product $\mathscr P(x)$ converges absolutely and uniformly on compact sets without the eigenvalues.

Next, consider the series in \eqref{r2_series}:
\begin{align*} 
\mathscr S_1(\la) & := \sum_{k=N+1}^\infty \frac{\alpha_{k0}\tilde \varphi'_{k,0} (\pi) \varphi^{[1]}_{k,0} (\pi)}{\la-\la_{k0}}, \\
\mathscr S_2 & := \sum_{k=N+1}^\infty \sum_{j=0}^1 (-1)^j\alpha_{kj}
(\tilde \varphi_{k,j} (\pi) \varphi_{k,j} (\pi) - 1).
\end{align*}

The proof for the series in \eqref{r1_series} is analogous and even easier.
Using Proposition~\ref{prop1} and \eqref{la_asymp}, we get
$$
\varphi_{k,0}^{[1]}(\pi) = k \varkappa_{k,0}, \quad
\tilde \varphi_{k,0}'(\pi) = k \varkappa_{k,1}, \quad \{ \varkappa_{k,0}\}, \, \{ \varkappa_{k,1}\} \in l_2.
$$

If $|\la|$ is bounded and $k$ is sufficiently large, then $|\la - \la_{k0}| \ge c k^2$, $c > 0$. Hence
$$
|\mathscr S_1(\la)| \le \sum_{k = N+1}^{\infty} C |\varkappa_{k,0}| |\varkappa_{k,1}| < \infty.
$$

Thus, the series $\mathscr S_1(\la)$ converges absolutely and uniformly on compact sets.

It follows from \eqref{varphik} and \eqref{tildevarphik} that
$$
\varphi_{k,0}(\pi) \tilde \varphi_{k,0}(\pi) = 1 + \delta_{\pi}(k + \varkappa_k) + \theta_k, \quad 
\varphi_{k,1}(\pi) \tilde \varphi_{k,1}(\pi) = 1 + \delta_{\pi}(k) + \theta_k, 
$$
where $\delta_{\pi}(k) = \int_0^{\pi} f(t) \cos k t \, dt$, $f \in L_2(0,\pi)$, and $\{ \theta_k\}$ denotes various sequences of $l_1$. Furthermore, recall that $\alpha_{k0} = \frac{2}{\pi} + \varepsilon_k$, $\alpha_{k1} = \frac{2}{\pi}$, where $\{ \varepsilon_k\} \in l_2$. Hence
$$
\mathscr S_2 = \sum_{k = N+1}^{\infty} \left( \frac{2}{\pi} (\delta_{\pi}(k+\varkappa_k) - \delta_{\pi}(k)) + \varepsilon_k \delta_{\pi}(k+\varkappa_k) + \theta_k \right).
$$

Note that 
$$
\{ \delta_{\pi}(k) \} \in l_2, \quad 
\{ (\delta_{\pi}(k+\varkappa_k) - \delta_{\pi}(k)) \} \in l_1. 
$$

This implies the absolute convergence of the series $\mathscr S_2$.
\end{proof}

In order to prove Theorem~\ref{mainthm}, it remains to show that the series \eqref{sigma_series}, \eqref{r1_series}, and \eqref{r2_series} converge to the coefficients $\sigma(x)$, $r_1(\la)$, and $r_2(\la)$, respectively. For this purpose, we use the approximation approach.
Consider the values $\{\la_n^K, \alpha_n^K\}_{n \ge 1}$, where $K$ is a sufficiently large natural number:
\begin{gather} \label{sdK}
\la_n^K =
    \left\{\begin{array}{ll}
        \la_n, \quad & n \le K,\\
        \tilde \la_n, \quad & n > K,\\
    \end{array}\right.
    ,\quad
\alpha_n^K =
    \left\{\begin{array}{ll}
        \alpha_n, \quad & n \le K,\\
        \tilde \alpha_n, \quad & n > K.\\
    \end{array}\right.
\end{gather}

Similarly to $\tilde H(x)$, we introduce the operator $\tilde H^K(x) = (\tilde H^K_{u,v}(x))_{u,v \in V}$:
\begin{gather*}
\tilde H^K_{ni, kj}(x) = 
\left\{\begin{array}{ll}
        0, \quad & k > K,\\
        \tilde H_{ni, kj}(x), \quad & k \le K.\\
    \end{array}\right.
\end{gather*}

\begin{lem} \label{lem:solveHK}
For each fixed $x \in [0,\pi]$, the operator $(E + \tilde H^K(x))$ has a bounded inverse.
\end{lem}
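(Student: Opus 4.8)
The plan is to treat $\tilde H^K(x)$ as a small perturbation of $\tilde H(x)$ and to reduce the claim to Theorem~\ref{main_equation_thm_solvability}, which already guarantees that $(E + \tilde H(x))$ is boundedly invertible for each $x \in [0,\pi]$. The operator $\tilde H(x) - \tilde H^K(x)$ simply has entries $\tilde H_{ni,kj}(x)$ for $k > K$ and zero otherwise, so everything hinges on showing that this difference is small in operator norm once $K$ is large.

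First I would estimate the norm of the difference. Using the bound \eqref{uneqs} together with the Cauchy--Schwarz inequality,
\begin{gather*}
\|\tilde H(x) - \tilde H^K(x)\|_{m \to m} \le C \sup_n \sum_{k > K} \frac{\xi_k}{|n-k|+1} \le C \left( \sum_{k > K} \xi_k^2 \right)^{1/2} \left( \sup_n \sum_{k} \frac{1}{(|n-k|+1)^2} \right)^{1/2}.
\end{gather*}
Since $\{\xi_k\} \in l_2$, the first factor tends to $0$ as $K \to \infty$, while the second factor is a finite constant independent of both $n$ and $x$. Hence $\|\tilde H(x) - \tilde H^K(x)\|_{m \to m} \to 0$ as $K \to \infty$, uniformly with respect to $x \in [0,\pi]$.

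Next I would secure a uniform bound $\sup_{x \in [0,\pi]} \|(E + \tilde H(x))^{-1}\|_{m\to m} =: C_0 < \infty$. This follows from the norm-continuity of the map $x \mapsto \tilde H(x)$ on $[0,\pi]$ (each entry $\tilde H_{ni,kj}(x)$ is continuous in $x$, and the tails are controlled uniformly through the $l_2$ sequence $\{\xi_k\}$ via \eqref{uneqs}), combined with the invertibility of $E + \tilde H(x)$ for every $x$ from Theorem~\ref{main_equation_thm_solvability}. The map $x \mapsto (E + \tilde H(x))^{-1}$ is then continuous on the compact interval $[0,\pi]$, so its norm attains a finite maximum $C_0$.

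Finally, writing $E + \tilde H^K(x) = (E + \tilde H(x)) \bigl(E - (E + \tilde H(x))^{-1}(\tilde H(x) - \tilde H^K(x))\bigr)$ and choosing $K$ so large that $C_0 \|\tilde H(x) - \tilde H^K(x)\|_{m\to m} < 1$ for all $x$ (possible by the first step), I would conclude that the bracketed factor is invertible via the Neumann series, and therefore that $(E + \tilde H^K(x))$ is boundedly invertible for every $x \in [0,\pi]$. The main obstacle is precisely this uniform-in-$x$ control: one must verify both that the perturbation is small uniformly in $x$ and that $(E+\tilde H(x))^{-1}$ is uniformly bounded, so that a single threshold for $K$ works simultaneously for all $x \in [0,\pi]$; were only pointwise invertibility required, the perturbation argument for each fixed $x$ would be immediate.
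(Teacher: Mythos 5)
Your proof is correct and follows essentially the same route as the paper: you estimate $\|\tilde H(x)-\tilde H^K(x)\|_{m\to m}\le C\bigl(\sum_{k>K}\xi_k^2\bigr)^{1/2}$ via the bound \eqref{uneqs} and Cauchy--Schwarz, then combine the resulting smallness with the invertibility of $E+\tilde H(x)$ from Theorem~\ref{main_equation_thm_solvability} through a standard Neumann-series perturbation. The only difference is that you additionally verify the uniform-in-$x$ bound on $(E+\tilde H(x))^{-1}$ (via norm-continuity in $x$ and compactness of $[0,\pi]$) so that a single threshold for $K$ works for all $x$ simultaneously --- a point the paper leaves tacit in its one-line conclusion, but which is genuinely needed later since one fixed $K$ must serve the whole interval.
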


\begin{proof} 
Let $f$ be an element of $m$. Then
\begin{gather*}
(\tilde H^K(x)f)_{ni} = \sum_{k=1}^{K} (\tilde H_{ni, k0}(x)f_{k0} + \tilde H_{ni, k1}(x)f_{k1}), \quad n \ge 1, i=0,1.
\end{gather*}

So we can estimate the norm 
\begin{gather*}
\|\tilde H(x) - \tilde H^K(x)\|_{\it m \to \it m} \le C \sup_n \sum_{k=K+1}^\infty \frac{\xi_k}{|n - k| + 1} \le C \sqrt{\sum_{k=K+1}^\infty {\xi^2_k}}.
\end{gather*}

As $\{ \xi_k \} \in l_2$, then $\|\tilde H(x) - \tilde H^K(x)\|_{\it m \to \it m} \to 0$ as $K \to \infty$. Together with Theorem~\ref{main_equation_thm_solvability}, this yields the claim.
\end{proof}

By virtue of Lemma~\ref{lem:solveHK}, the equation
\begin{gather*}
\tilde \psi(x) = (E + \tilde H^K(x)) \psi^K(x)
\end{gather*}
has a unique solution for each fixed $x \in [0,\pi]$. The first elements $\psi^K_{ni}(x)$, $n = \overline{1, K}$, $i=0,1$, of its solution $\psi^K(x)$ solve the finite system of linear equations
\begin{gather} \label{mainK}
\tilde \psi_{ni}(x) = \psi^K_{ni}(x) + \sum_{k=1}^{K} (\tilde H_{ni, k0}(x)\psi^K_{k0}(x) + \tilde H_{ni, k1}(x)\psi^K_{k1}(x)), \quad n = \overline{1, K}, i=0,1.
\end{gather}

Since all the functions in the system \eqref{mainK} are infinitely differentiable, then so does the solution. 
Define
\begin{gather} \label{defvarphiK}
\varphi^K_{n,0}(x) = \xi_n\psi^K_{n0}(x)+\psi^K_{n1}(x), \quad
\varphi^K_{n,1}(x) = \psi^K_{n1}(x), \quad n = \overline{1,K}.
\end{gather}

Using \eqref{mainK} and \eqref{defvarphiK}, we obtain the relation similar to \eqref{phi_ni}:
\begin{equation} \label{phiK_ni}
\tilde \varphi_{n,i}(x) = \varphi^K_{n,i}(x) + \sum_{k=1}^K (\tilde Q_{n,i; k,0}(x)\varphi^K_{k,0}(x) - \tilde Q_{n,i; k,1}(x)\varphi^K_{k,1}(x)), \quad n = \overline{1, K}.
\end{equation}

Construct the functions $\varphi^K(x, \la)$ and $\Phi^K(x, \la)$ by the following formulas, which are similar to \eqref{varphifromtilde} and \eqref{Phifromtilde}, respectively:
\begin{gather} \label{varphiKfromtilde}
\varphi^K(x, \la) = \tilde\varphi(x, \la) - \sum_{i=0}^1 \sum_{k \in I_i, \, k \le K} \sum_{j=0} ^{m_{ki}-1} \tilde A_{k+j, i}(x, \la) \varphi^K_{k+j, i}(x, \la), \\ \notag
\Phi^K(x, \la) = \tilde\Phi(x, \la) - \sum_{i=0}^1 \sum_{k \in I_i, \, k \le K} \sum_{j=0} ^{m_{ki}-1} \tilde B_{k+j, i}(x, \la) \varphi^K_{k+j, i}(x, \la).
\end{gather}

It follows from \eqref{phiK_ni} and \eqref{varphiKfromtilde} that
$\varphi^K_{n+j, i}(x) = \varphi^K_j(x, \la_{ni})$, where $\varphi^K_{j}(x, \la) = \frac{1}{j!} \frac{\partial^j}{\partial \la^j} \varphi^K(x, \la)$.

We can also get the integral representations
\begin{gather} \label{varphiKint}
\varphi^K(x, \la) = \tilde\varphi(x, \la) - \frac{1}{2\pi i} \oint_{\Gamma_K} {\tilde A(x, \la, \mu) \varphi^K(x, \mu)} d\mu, \\ \label{PhiKint}
\Phi^K(x, \la) = \tilde\Phi(x, \la) - \frac{1}{2\pi i} \oint_{\Gamma_K} {\tilde B(x, \la, \mu) \varphi^K(x, \mu)} d\mu.
\end{gather}

Construct the functions
\begin{align} \notag 
\sigma^K(x) &= -\frac{1}{\pi i}\oint_{\Gamma_N} {\Bigg(\tilde \varphi(x, \mu)\varphi^K(x, \mu)-\frac{1}{2}\Bigg)\hat M(\mu)}d\mu \\ \label{sigmaK_series}
& - 2 \sum_{k=N+1}^{K} \sum_{j=0}^1 (-1)^j\alpha_{kj}\Bigg(\tilde \varphi_{k,j}(x)\varphi^K_{k,j}(x) - \frac{1}{2}\Bigg),
\end{align}
\begin{align} \notag
r_1^K(\la) = \prod_{k = 1}^{M_1} (\la - \la_{k0}) \prod_{k = M_1+1}^{K} & \frac{\la - \la_{k0}}{\la - \la_{k1}}\Bigg(1 - \frac{1}{2\pi i} \oint_{\Gamma_N} {\frac{\tilde\varphi'(\pi, \mu)\varphi^K(\pi, \mu)}{\la - \mu} M(\mu)}d\mu \\ \label{r1K_series}
& - \sum_{k=N+1}^K \frac{\alpha_{k0}\tilde\varphi'_{k,0}(\pi)\varphi^K_{k,0}(\pi)}{\la - \la_{k0}} \Bigg),
\end{align}
\begin{align}\notag 
	r_2^K(\la) = & \prod_{k = 1}^{M_1} (\la - \la_{k0}) \prod_{k = M_1+1}^{K} \frac{\la - \la_{k0}}{\la - \la_{k1}}\Bigg(\frac{1}{2 \pi i} \oint_{\Gamma_N} {\frac{\tilde \varphi' (\pi, \mu) \varphi^{K [1]} (\pi, \mu)}{\la - \mu} M(\mu)}d\mu \\ \notag
	&  -\frac{1}{2\pi i} \oint_{\Gamma_N} (\tilde \varphi (\pi, \mu) \varphi^K (\pi, \mu) - 1) \hat M(\mu)d\mu + \sum_{k=N+1}^K \frac{\alpha_{k0}\tilde \varphi'_{k,0} (\pi) \varphi^{K [1]}_{k,0} (\pi)}{\la-\la_{k0}} \\ \label{r2K_series}
	& - \sum_{k = N+1}^{K} \sum_{j=0}^1 (-1)^j\alpha_{kj}(\tilde \varphi_{k,j} (\pi) \varphi_{k,j}^K (\pi) - 1)\Bigg).
	\end{align}

The quasi-derivative in \eqref{r2K_series} is defined as $y^{[1]} := y' - \sigma^K y$.

\begin{lem} \label{lem:degree}
$r_1^K(\la)$ is a polynomial of degree $M_1$ with the leading coefficient $1$, and $r_2^K(\la)$ is a polynomial of degree not greater than $M_1$.
\end{lem}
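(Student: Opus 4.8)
The plan is to prove both assertions by showing that $r_1^K$ and $r_2^K$, which are a priori only rational functions of $\la$, are in fact entire and of polynomial growth, and then to read off the degrees and the leading coefficient from their behaviour as $\la\to\infty$. I treat $r_1^K$ in detail; $r_2^K$ is handled the same way. First I would record that $\varphi^K(x,\la)$ is entire in $\la$: by Lemma~\ref{A} the coefficients $\tilde A_{k+j,i}(x,\la)$ are entire in $\la$ (the difference quotient $\tilde D(x,\la,\mu)$ has a removable singularity at $\la=\mu$), so the finite sum \eqref{varphiKfromtilde} makes $\varphi^K(x,\la)$ and its quasi-derivative entire in $\la$. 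Writing $P(\la):=\prod_{k=1}^{M_1}(\la-\la_{k0})\prod_{k=M_1+1}^{K}\frac{\la-\la_{k0}}{\la-\la_{k1}}$, the only candidate poles of the right-hand side of \eqref{r1K_series} are the points $\la_{k0}$ with $N<k\le K$ (from the explicit sum) and $\la_{k1}$ with $M_1<k\le K$ (from the denominator of $P$). At each $\la_{k0}$ with $N<k\le K$ the eigenvalue is simple, so $P$ has a simple zero there which cancels the simple pole of the sum; hence these poles are removable.

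Next I would merge the $\Gamma_N$-integral with the finite sum into a single $\Gamma_K$-integral. Since for $N<k\le K$ the eigenvalue $\la_{k0}$ is simple and $M(\mu)$ has residue $\alpha_{k0}$ there (by \eqref{weyl_sum}), each term $\frac{\alpha_{k0}\tilde\varphi'_{k,0}(\pi)\varphi^K_{k,0}(\pi)}{\la-\la_{k0}}$ is exactly the residue at $\mu=\la_{k0}$ of the integrand $\frac{\tilde\varphi'(\pi,\mu)\varphi^K(\pi,\mu)}{\la-\mu}M(\mu)$. The residue theorem then yields, for $\la$ exterior to $\Gamma_K$, the representation $r_1^K(\la)=P(\la)\bigl(1-\frac{1}{2\pi i}\oint_{\Gamma_K}\frac{\tilde\varphi'(\pi,\mu)\varphi^K(\pi,\mu)}{\la-\mu}M(\mu)\,d\mu\bigr)$, with an analogous formula for $r_2^K$. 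Letting $\la\to\infty$, the integral tends to $0$ and $P(\la)\sim\la^{M_1}$, so $r_1^K(\la)=\la^{M_1}(1+o(1))$ uniformly for large $|\la|$. For $r_2^K$ the bracket tends to the sum of its $\la$-independent terms, a constant, so $r_2^K(\la)=O(\la^{M_1})$. Once entireness is established, an entire function of growth $O(\la^{M_1})$ is a polynomial of degree $\le M_1$ (Liouville/Cauchy estimates), and the asymptotics pin down $r_1^K$ to degree exactly $M_1$ with leading coefficient $1$, while $r_2^K$ has degree at most $M_1$.

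The main obstacle is therefore the remaining pole cancellation: I must show that the residues of $r_1^K$ (and $r_2^K$) at the model eigenvalues $\la_{k1}$, $M_1<k\le K$, vanish, i.e. that the bracket in \eqref{r1K_series} vanishes at each $\la_{k1}$. Since $\la_{k1}\notin\Gamma_N$ and $\la_{k1}$ is a model, not a true, eigenvalue, the bracket in \eqref{r1K_series} is analytic near $\la_{k1}$, so the residue of $r_1^K$ there equals the (nonzero) product prefactor times the value of the bracket at $\la_{k1}$; hence everything reduces to proving $\mathrm{bracket}(\la_{k1})=0$. Evaluating the $\Gamma_N$-integral at $\la=\la_{k1}$ by residues and combining with the explicit sum, this amounts to the bilinear identity $\sum_{j=1}^{K}\frac{\alpha_{j0}\tilde\varphi'_{j,0}(\pi)\varphi^K_{j,0}(\pi)}{\la_{k1}-\la_{j0}}=1$ in the reconstructed data (with the obvious confluent generalization for the multiple eigenvalues inside $\Gamma_N$).

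I expect this identity to be the technical heart of the proof, and I would establish it directly from the defining relations \eqref{phiK_ni} (equivalently \eqref{varphiKfromtilde}) for $\varphi^K$, evaluated at $x=\pi$ and $\la=\la_{k1}$. The decisive structural input is that the model problem was chosen with $\tilde r_1(\la)=\la^{M_1}$, $\tilde r_2\equiv 0$, so that $\tilde\varphi(x,\la)=\cos\rho x$ forces $\tilde\varphi'(\pi,\la_{k1})=-\rho_{k1}\sin\rho_{k1}\pi=0$ at every model eigenvalue (with $\rho_{k1}$ a nonnegative integer by \eqref{tildesd}), together with the higher-order vanishing of $\la^{M_1}\tilde\varphi'(\pi,\la)$ at the degenerate point $\la=0$; this is precisely what makes the singular terms in \eqref{varphiKfromtilde} collapse at $\la=\la_{k1}$ and produces the required value. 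The computation for $r_2^K$ is longer but entirely parallel: one checks in the same way that its bracket has vanishing residues at the $\la_{k1}$, so that $r_2^K$ is entire of growth $O(\la^{M_1})$, hence a polynomial of degree at most $M_1$. Combining the vanishing of all residues with the growth estimates of the second paragraph completes the argument.
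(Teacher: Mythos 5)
Your proposal is correct and follows essentially the same route as the paper: the paper likewise reduces everything to the key identity $\mathscr E(\la_{n1}) = 1 - \sum_{k=1}^K \frac{\alpha_{k0}\tilde\varphi'_{k,0}(\pi)\varphi^K_{k,0}(\pi)}{\la_{n1}-\la_{k0}} = 0$ at each model eigenvalue, proved by evaluating the integral representation \eqref{varphiKint} at $x=\pi$, $\la=\la_{n1}$ via residues, where exactly your structural input $\tilde\varphi'(\pi,\la_{n1})=0$ makes the model-pole contributions collapse and the $\varphi^K_{n,1}(\pi)$ terms cancel. The only cosmetic difference is the finishing step: the paper writes the bracket as an explicit rational function $S^*(\la)\big/\prod_{k=1}^K(\la-\la_{k0})$ with $s^*_K=1$ and reduces the fraction after showing $S^*(\la_{n1})=0$, counting degrees directly, whereas you deduce entirety plus the $O(\la^{M_1})$ growth bound and conclude by a Liouville-type argument --- equivalent in substance.
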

\begin{proof}
For simplicity, assume that the eigenvalues $\{ \la_{k0} \}_{k \ge 1}$ are simple and $\la_{k0} \ne \la_{n1}$. The general case requires technical changes. Then
\begin{align} \label{r1K_series_simple}
r_1^K(\la) = \prod_{k = 1}^{M_1} (\la - \la_{k0}) \prod_{k = M_1+1}^{K} \frac{\la - \la_{k0}}{\la - \la_{k1}}\Bigg(1 - \sum_{k=1}^K \frac{\alpha_{k0}\tilde\varphi'_{k,0}(\pi)\varphi^K_{k,0}(\pi)}{\la - \la_{k0}} \Bigg),
\end{align}

The expression in parentheses in \eqref{r1K_series_simple} can be represented as follows:
\begin{gather*}
\mathscr E(\la) :=  1 - \sum_{k=1}^K \frac{\alpha_{k0}\tilde\varphi'_{k,0}(\pi)\varphi^K_{k,0}(\pi)}{\la - \la_{k0}} = \frac{\prod\limits_{k = 1}^{K} (\la - \la_{k0}) - \sum\limits_{k=1}^{K-1} s_k\la^k}{\prod\limits_{k = 1}^{K} (\la - \la_{k0})} = \frac{\sum\limits_{k=1}^{K} s^*_k\la^k}{\prod\limits_{k = 1}^{K} (\la - \la_{k0})},
\end{gather*}
where $s_k, s^*_k \in \mathbb{C}$ and $s^*_K = 1$.
Then, we get
\begin{gather} \label{fracr1}
r_1^K(\la) = \frac{ S^*(\la)}{\prod\limits_{k = M_1+1}^{K} (\la - \la_{k1})}, \qquad S^*(\la) := \sum_{k=1}^{K} s^*_k\la^k.
\end{gather}

Let us show that $S^*(\la_{n1}) = 0$ for all $n = \overline{1, K}$.
From \eqref{varphiKint} we get:
\begin{gather*}
\varphi^K_{n,1}(\pi) = \tilde\varphi_{n,1}(\pi) - \frac{1}{2\pi i} \oint_{\Gamma_K} {\tilde A(\pi, \la_{n1}, \mu) \varphi^K(\pi, \mu)} d\mu.
\end{gather*}

Calculating the contour integral by the Residue theorem and taking the relation $\tilde \varphi_{n,1}(\pi) = 1$ into account, we obtain
\begin{gather*}
\varphi^K_{n,1}(\pi) = 1 - \sum_{k=1}^K \frac{\alpha_{k0}\tilde\varphi'_{k,0}(\pi)\varphi^K_{k,0}(\pi)}{\la_{n1} - \la_{k0}} +\varphi^K_{n,1}(\pi), \quad n = \overline{1, K},
\end{gather*}
and, consequently,
\begin{gather*}
\mathscr E(\la_{n1}) = 1 - \sum_{k=1}^K \frac{\alpha_{k0}\tilde\varphi'_{k,0}(\pi)\varphi^K_{k,0}(\pi)}{\la_{n1} - \la_{k0}} = 0 \quad n = \overline{1, K}.
\end{gather*}

Hence, $S^*(\la_{n1}) = 0$, $n = \overline{1,K}$, so one can reduce the fraction \eqref{fracr1} and obtain a polynomial.
Since the numerator degree is $K$ and the denominator degree is $(K - M_1)$, then $r_1^K(\la)$ is a polynomial of degree $M_1$.

The proof for $r_2^K(\la)$ is analogous. The only difference is that the leading coefficient of $r_2^K(\la)$ not necessarily equals $1$. 
\end{proof}

Consider the boundary value problem $\mathcal L^K := \mathcal L(\sigma^K, r_1^K, r_2^K)$. Clearly, the function $\sigma^K(x)$ is infinitely differentiable, so the problem $\mathcal L^K$ can be represented as follows:
$$
	-y'' + \dfrac{d}{dx} \sigma^K(x) y = \la y, \quad x \in (0, \pi),
$$
$$
	y'(0) - \sigma^K(0) y(0) = 0, \quad r_1^K(\la)(y'(\pi) - \sigma^K(\pi) y(\pi)) + r_2^K(\la) y(\pi) = 0.
$$

\begin{lem} \label{lem:sol}
The following relations hold:
\begin{gather*}
-\dfrac{d^2}{dx^2}\varphi^K(x, \la) + \frac{d}{dx} \sigma^K(x) \varphi^K(x, \la) = \la \varphi^K(x, \la), \quad x \in (0,\pi), \\ \varphi^K(0,\la) = 1, \quad \frac{d}{dx}\varphi^K(0,\la) = \sigma^K(0), \\
-\dfrac{d^2}{dx^2}\Phi^K(x, \la) + \frac{d}{dx} \sigma^K(x) \Phi^K(x, \la) = \la \Phi^K(x, \la), \quad x \in (0,\pi), \\
\frac{d}{dx}\Phi^K(0,\la) - \sigma^K(0) \Phi^K(0,\la) = 0, \\ r_1^K(\la)\left(\frac{d}{dx}\Phi^K(\pi,\la) - \sigma^K(\pi) \Phi^K(\pi,\la)\right) + r_2^K(\la) \Phi^K(\pi,\la) = 0.
\end{gather*}
Thus, $\Phi^K(x, \la)$ is the Weyl solution of the problem $\mathcal{L}^K$.
\end{lem}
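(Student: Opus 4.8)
The plan is to verify, for each fixed (sufficiently large) $K$, that $\varphi^K$ and $\Phi^K$ defined by \eqref{varphiKfromtilde}, \eqref{varphiKint}, \eqref{PhiKint} solve the equation of $\mathcal L^K$ and satisfy its boundary conditions. I emphasize that here $\sigma^K\in C^\infty[0,\pi]$, so $\mathcal L^K$ is a \emph{regular} Sturm--Liouville problem and no genuine singularity is present at this stage (the singular limit is treated later, as $K\to\infty$). First I would record smoothness: the coefficients $\tilde\psi_{ni}(x)$ and $\tilde H_{ni,kj}(x)$ of the finite system \eqref{mainK} are infinitely differentiable in $x$, hence so is its solution $\psi^K_{ni}(x)$ and, by \eqref{defvarphiK}, the functions $\varphi^K_{k,i}(x)$. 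Consequently $\varphi^K(x,\la)$ and $\Phi^K(x,\la)$ in \eqref{varphiKint}, \eqref{PhiKint} are $C^\infty$ in $x$, and the $x$-derivatives may be taken under the contour integrals.

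The engine of the differential equation is the Wronskian identity for the model solutions. Since $\tilde\sigma\equiv 0$ gives $\tilde\varphi''=-\mu\tilde\varphi$ and $\tilde\Phi''=-\la\tilde\Phi$, differentiating the defining brackets yields $\partial_x\tilde D(x,\la,\mu)=\tilde\varphi(x,\la)\tilde\varphi(x,\mu)$ and, for $\tilde E:=\langle\tilde\Phi,\tilde\varphi\rangle/(\la-\mu)$, $\partial_x\tilde E(x,\la,\mu)=\tilde\Phi(x,\la)\tilde\varphi(x,\mu)$; hence $\partial_x\tilde A=\tilde\varphi(x,\la)\tilde\varphi(x,\mu)\hat M(\mu)$ and $\partial_x\tilde B=\tilde\Phi(x,\la)\tilde\varphi(x,\mu)\hat M(\mu)$. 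Differentiating \eqref{varphiKint} once in $x$ and applying this identity, the boundary term factors as $\tilde\varphi(x,\la)\cdot\frac{1}{2\pi i}\oint_{\Gamma_K}\tilde\varphi(x,\mu)\varphi^K(x,\mu)\hat M(\mu)\,d\mu$; collapsing the finite sum in \eqref{sigmaK_series} to a single contour integral over $\Gamma_K$ by the Residue theorem shows that this diagonal factor equals $\tfrac12(c_0-\sigma^K(x))$ with $c_0=\frac{1}{2\pi i}\oint_{\Gamma_K}\hat M\,d\mu$, so that $\sigma^K$ emerges naturally. This lets me introduce the quasi-derivative $\varphi^{K[1]}:=\partial_x\varphi^K-\sigma^K\varphi^K$, obtain for it a representation of the same type, and then differentiate once more, using $\partial_x\tilde D=\tilde\varphi\tilde\varphi$ again, to close the loop and arrive at $-(\varphi^{K[1]})'-\sigma^K\varphi^{K[1]}-(\sigma^K)^2\varphi^K=\la\varphi^K$, i.e. the equation of $\mathcal L^K$. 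The identical computation with $\tilde B$ in place of $\tilde A$ reproduces the same $\sigma^K$ in its diagonal term and hence gives the equation for $\Phi^K$.

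The initial data come from $x=0$. Since $\tilde\varphi(0,\la)=1$ and $\tilde\varphi'(0,\la)=0$, the bracket $\langle\tilde\varphi(0,\la),\tilde\varphi(0,\mu)\rangle$ vanishes, so $\tilde A(0,\cdot,\cdot)\equiv0$ and \eqref{varphiKint} gives $\varphi^K(0,\la)=1$; differentiating and evaluating at $0$ gives $\partial_x\varphi^K(0,\la)=-c_0=\sigma^K(0)$, whence $\varphi^{K[1]}(0,\la)=0$. For $\Phi^K$ one has $\langle\tilde\Phi(0,\la),\tilde\varphi(0,\mu)\rangle=-\tilde\Phi^{[1]}(0,\la)=-1$, and a residue computation of the quasi-derivative of \eqref{PhiKint} at $x=0$ returns the left normalization $\Phi^{K[1]}(0,\la)=1$ of \eqref{bcPhi}.

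The hard part will be the right boundary condition at $x=\pi$, where the specific reconstruction formulas \eqref{r1K_series}--\eqref{r2K_series} must be matched to the boundary values $\varphi^K(\pi,\la)$, $\varphi^{K[1]}(\pi,\la)$, $\Phi^K(\pi,\la)$, $\Phi^{K[1]}(\pi,\la)$. My plan is to set $\Delta^K(\la):=r_1^K(\la)\varphi^{K[1]}(\pi,\la)+r_2^K(\la)\varphi^K(\pi,\la)$, the analog of \eqref{charfun1}, and to show, using Lemma~\ref{lem:degree} together with careful residue bookkeeping over $\Gamma_N$ and the finite correction tails, that the polynomials $r_1^K$, $r_2^K$ are exactly those for which $\Phi^K=-\psi^K/\Delta^K$ with $\psi^K(\pi,\la)=r_1^K(\la)$ and $\psi^{K[1]}(\pi,\la)=-r_2^K(\la)$. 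The right condition $r_1^K\Phi^{K[1]}(\pi)+r_2^K\Phi^K(\pi)=0$ then follows from the cancellation $r_1^K\psi^{K[1]}(\pi)+r_2^K\psi^K(\pi)=-r_1^Kr_2^K+r_2^Kr_1^K=0$. The delicate point is this identification: one must check that the $\Gamma_N$-integrals plus the series tails in \eqref{r1K_series}, \eqref{r2K_series} reproduce the correct boundary combinations after the rational factors are reduced via the pole-matching of Lemma~\ref{lem:degree}, and that the leading coefficients agree. Once the equation, the left normalization, and this right condition are in place, $\Phi^K$ is the Weyl solution of $\mathcal L^K$, which completes the lemma.
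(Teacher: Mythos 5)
The paper itself disposes of this lemma in one line (``can be proved by direct calculations''), so there is no detailed argument to compare against; your proposal is the natural execution of exactly that direct calculation, and its checkable ingredients are correct. Indeed, $\partial_x \tilde D(x,\la,\mu)=\tilde\varphi(x,\la)\tilde\varphi(x,\mu)$ and hence $\partial_x\tilde A(x,\la,\mu)=\tilde\varphi(x,\la)\tilde\varphi(x,\mu)\hat M(\mu)$; collapsing \eqref{sigmaK_series} into a single $\Gamma_K$-integral by residues does give
$\frac{1}{2\pi i}\oint_{\Gamma_K}\tilde\varphi(x,\mu)\varphi^K(x,\mu)\hat M(\mu)\,d\mu=\tfrac12\bigl(c_0-\sigma^K(x)\bigr)$ with $c_0=\frac{1}{2\pi i}\oint_{\Gamma_K}\hat M(\mu)\,d\mu$;
the values at $x=0$ come out as you state ($\tilde A(0,\la,\mu)\equiv 0$, so $\varphi^K(0,\la)=1$, $\partial_x\varphi^K(0,\la)=-c_0=\sigma^K(0)$, and $\Phi^{K[1]}(0,\la)=1$ after the residue computation). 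At $x=\pi$, your plan is the right one, with one caution: define $\Delta^K(\la):=-\la^{M_1}\rho\sin\rho\pi\,\prod_{k=1}^{K}\frac{\la-\la_{k0}}{\la-\la_{k1}}$ directly by this product (i.e., $\tilde\Delta$ times the eigenvalue ratios), rather than through $r_1^K,r_2^K$ as in your $\Delta^K:=r_1^K\varphi^{K[1]}(\pi,\cdot)+r_2^K\varphi^K(\pi,\cdot)$, which as phrased is circular with the identification you are trying to prove. Then the two identities $r_1^K(\la)=-\Delta^K(\la)\Phi^K(\pi,\la)$ and $r_2^K(\la)=\Delta^K(\la)\Phi^{K[1]}(\pi,\la)$ follow from \eqref{PhiKint} evaluated at $x=\pi$ (note $\tilde\Phi(\pi,\la)=1/(\rho\sin\rho\pi)$, $\tilde\Phi'(\pi,\la)=0$, and $\tilde\varphi'(\pi,\mu)\tilde M(\mu)$ is entire, which explains the replacement of $\hat M$ by $M$ in \eqref{r1K_series}); the pole cancellations you need are exactly the relations $\mathscr E(\la_{n1})=0$ from Lemma~\ref{lem:degree}, and the boundary condition then follows by the trivial cancellation you indicate.

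The one step that would fail as you describe it is the closure of the differential equation for $\varphi^K$: differentiating \eqref{varphiKint} twice does \emph{not} by itself ``arrive at'' the equation. Splitting $\la=(\la-\mu)+\mu$ in the term $\la\varphi^K$ and using $\langle\tilde\varphi(x,\la),\tilde\varphi(x,\mu)\rangle=(\la-\mu)\tilde D$, the computation yields only that the residual $\eta(x,\la):=\partial_x^2\varphi^K(x,\la)+\bigl(\la-(\sigma^K)'(x)\bigr)\varphi^K(x,\la)$ satisfies the homogeneous relation $\eta(x,\la)=-\frac{1}{2\pi i}\oint_{\Gamma_K}\tilde A(x,\la,\mu)\eta(x,\mu)\,d\mu$. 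To conclude $\eta\equiv 0$ you must evaluate this relation at the nodes $\la=\la_{ni}$, $n\le K$ (differentiating in $\la$ first at multiple eigenvalues), recognize the homogeneous version of the finite system \eqref{phiK_ni}/\eqref{mainK}, and invoke the invertibility of $E+\tilde H^K(x)$ from Lemma~\ref{lem:solveHK} to force the node values to vanish; the integral representation then gives $\eta(x,\la)=0$ for all $\la$, since the contour integral samples $\eta$ only at those nodes. This injectivity ingredient is indispensable (it is precisely how the analogous verifications run in \cite{FY01} and \cite{BondTamkang}), and your sketch omits it. By contrast, for $\Phi^K$ no such step is needed: the integrand of \eqref{PhiKint} contains $\varphi^K$, not $\Phi^K$, so once $\partial_x^2\varphi^K(x,\mu)+\mu\varphi^K(x,\mu)=(\sigma^K)'(x)\varphi^K(x,\mu)$ is established, the same computation closes directly for $\Phi^K$. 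With this one step inserted, your proposal is complete and matches the calculation the paper leaves to the reader.
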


Lemma~\ref{lem:sol} can be proved by direct calculations.

\begin{lem}
$\{\la^K_n, \alpha^K_n\}_{n \ge 1}$ are the spectral data for the boundary problem $\mathcal{L}^K$.
\end{lem}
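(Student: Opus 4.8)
The plan is to compute the Weyl function $M^K(\la)$ of the constructed problem $\mathcal{L}^K$ in closed form and to show that it admits the partial-fraction representation $\sum_{n}\alpha_n^K/(\la-\la_n^K)$; by Lemma~\ref{weyl_func} (the representation \eqref{weyl_sum}) this identifies $\{\la_n^K,\alpha_n^K\}_{n\ge 1}$ as the poles and principal-part coefficients of $M^K$, that is, as the spectral data of $\mathcal{L}^K$. The starting point is the preceding Lemma~\ref{lem:sol}, which guarantees that $\varphi^K(x,\la)$ and $\Phi^K(x,\la)$ are, respectively, the $\varphi$-solution and the Weyl solution of $\mathcal{L}^K$. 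In particular $\varphi^K(0,\la)=1$, and by the analogue of \eqref{Phi} one has $\Phi^K(x,\la)=S^K(x,\la)+M^K(\la)\varphi^K(x,\la)$, whence $M^K(\la)=\Phi^K(0,\la)$ because $S^K(0,\la)=0$.

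Next I would evaluate the integral representation \eqref{PhiKint} at $x=0$. Using $\tilde\sigma\equiv 0$, the explicit solution $\tilde\varphi(x,\mu)=\cos\rho x$ and the model Weyl data from \eqref{tildeMmain}, together with $\tilde\Phi(0,\la)=\tilde M(\la)$, $\tilde\Phi^{[1]}(0,\la)=1$ (the boundary condition \eqref{bcPhi}), $\tilde\varphi(0,\mu)=1$ and $\tilde\varphi^{[1]}(0,\mu)=0$, one computes the Wronskian $\langle\tilde\Phi(0,\la),\tilde\varphi(0,\mu)\rangle=-1$, so the kernel $\tilde B$ (defined just after \eqref{Phifromtilde}) satisfies $\tilde B(0,\la,\mu)=\hat M(\mu)/(\mu-\la)$. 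Since $\varphi^K(0,\mu)\equiv 1$, this gives
\begin{gather*}
M^K(\la)=\tilde M(\la)-\frac{1}{2\pi i}\oint_{\Gamma_K}\frac{\hat M(\mu)}{\mu-\la}\,d\mu .
\end{gather*}

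I would then fix $\la$ outside $\Gamma_K$ and compute the contour integral by the Residue theorem. As $1/(\mu-\la)$ is analytic in the interior of $\Gamma_K$, the only contributions come from the poles of $\hat M=M-\tilde M$ lying inside $\Gamma_K$, namely $\{\la_n\}$ and $\{\tilde\la_n\}$ with $n\le K$ (by the asymptotics \eqref{la_asymp} and \eqref{tildesd} these are exactly the eigenvalues of index at most $K$). Collecting the residues reproduces the principal parts of $M$ and of $-\tilde M$ at these points; combining with the expansion $\tilde M(\la)=\sum_{n\ge 1}\tilde\alpha_n/(\la-\tilde\la_n)$ (Lemma~\ref{weyl_func} applied to the model problem), the first $K$ model-terms cancel and one is left with
\begin{gather*}
M^K(\la)=\sum_{n\le K}\frac{\alpha_n}{\la-\la_n}+\sum_{n>K}\frac{\tilde\alpha_n}{\la-\tilde\la_n}=\sum_{n\ge 1}\frac{\alpha_n^K}{\la-\la_n^K},
\end{gather*}
in view of the definition \eqref{sdK}. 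Both sides are meromorphic, so the identity, first established for $\la$ outside $\Gamma_K$, extends everywhere by analytic continuation, and Lemma~\ref{weyl_func} then yields the claim.

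The main obstacle is the careful bookkeeping in the residue step. First one must check that the index cut-off is precisely $n=K$, i.e. that $\la_n,\tilde\la_n$ lie inside $\Gamma_K$ exactly for $n\le K$, which rests on $|\rho_n|,|\tilde\rho_n|\approx|n-M_1-1|<K+\tfrac12$. Second, and more delicately, the poles are genuinely multiple: the model problem has a zero eigenvalue of multiplicity $M_1+1$ and $\mathcal{L}$ may carry finitely many multiple eigenvalues, all lying inside $\Gamma_N$ and hence inside $\Gamma_K$. Consequently the residues must be taken of the full principal parts of $M$ and $\tilde M$ as in \eqref{weyl_sum}, not of simple poles as in the heuristic above. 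This is the same ``general case requires technical changes'' point encountered in Lemma~\ref{lem:degree}, and one must verify that the grouped principal-part coefficients $\alpha_{k+j,i}$ reassemble correctly into the series $\sum_{n\ge 1}\alpha_n^K/(\la-\la_n^K)$.
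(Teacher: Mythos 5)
Your proposal is correct and follows essentially the same route as the paper: identify $M^K(\la)=\Phi^K(0,\la)$ via Lemma~\ref{lem:sol}, evaluate the integral representation \eqref{PhiKint} at $x=0$ (your formula $M^K(\la)=\tilde M(\la)-\frac{1}{2\pi i}\oint_{\Gamma_K}\frac{\hat M(\mu)}{\mu-\la}\,d\mu$ is exactly the paper's, rewritten via $\hat M=M-\tilde M$), and compute the contour integral by residues against the representation \eqref{weyl_sum} to recover the partial-fraction expansion with the data \eqref{sdK}. You merely supply details the paper leaves implicit — the Wronskian evaluation $\tilde B(0,\la,\mu)=\hat M(\mu)/(\mu-\la)$, the residue bookkeeping for multiple poles, and the index cut-off check — and your caveats about these points are appropriate.
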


\begin{proof}
From Lemma~\ref{lem:sol}, we get that $\Phi^K(x, \la)$ is the Weyl solution of the problem $\mathcal{L}^K$. So, we can find the Weyl function as $\Phi^K(0, \la)$. Then it follows from \eqref{PhiKint} that
\begin{gather*}
M^K(\la) = \tilde M(\la) + \frac{1}{2\pi i} \oint_{\Gamma_K} {\frac{M(\mu)}{\la - \mu}}d\mu - \frac{1}{2\pi i} \oint_{\Gamma_K} {\frac{\tilde M(\mu)}{\la - \mu}}d\mu.
\end{gather*}

Substituting \eqref{weyl_func} and counting the contour integrals using the Residue theorem, we get
\begin{gather*}
M^K(\la) = \sum_{k \in I} \sum_{j=0}^{m_{k} - 1} \frac{\alpha^K_{k+j}}{(\la-\la^K_k)^{j+1}},
\end{gather*}
Consequently, $\{\la^K_n, \alpha^K_n\}_{n \ge 1}$ are the spectral data for the boundary problem $\mathcal{L}^K$.
\end{proof}

Next, let us pass from the coefficients $(\sigma^K, r_1^K, r_2^K)$ to $(\sigma, r_1, r_2)$.

\begin{lem} \label{coeffscon}
$\sigma^K(x) \to \sigma(x)$ in $L_2(0, \pi)$, $r^K_1(\la) \to r_1(\la)$ and $r^K_2(\la) \to r_2(\la)$ uniformly on the compact sets as $K \to \infty$ (i.e. the corresponding coefficients of the polynomials converge), where $\sigma(x)$, $r_1(\la)$, and $r_2(\la)$ are defined by \eqref{sigma_series}, \eqref{r1_series}, and \eqref{r2_series}, respectively.
\end{lem}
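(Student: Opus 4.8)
The plan is to compare the full main equation \eqref{main_equation} with its truncation and then to transfer the resulting convergence of the solutions into the reconstruction formulas. First I would show that $\psi^K(x) \to \psi(x)$ in $m$ uniformly with respect to $x \in [0,\pi]$. Subtracting the truncated equation $\tilde\psi(x) = (E + \tilde H^K(x))\psi^K(x)$ from \eqref{main_equation} gives $(E + \tilde H(x))(\psi(x) - \psi^K(x)) = (\tilde H^K(x) - \tilde H(x))\psi^K(x)$, so that
\begin{gather*}
\psi(x) - \psi^K(x) = (E + \tilde H(x))^{-1}(\tilde H^K(x) - \tilde H(x))\psi^K(x).
\end{gather*}
By \eqref{uneqs} and the uniform bound $|\psi^K_{kj}(x)| \le C$, the right-hand factor is bounded in $m$ by $C \sup_n \sum_{k > K} \xi_k / (|n-k|+1) \le C \bigl(\sum_{k>K}\xi_k^2\bigr)^{1/2}$, which tends to zero since $\{\xi_k\} \in l_2$ (exactly as in the proof of Lemma~\ref{lem:solveHK}). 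Combined with the uniform boundedness of $(E + \tilde H(x))^{-1}$ on the compact segment $[0,\pi]$ — which follows from Theorem~\ref{main_equation_thm_solvability} together with the norm-continuity of $x \mapsto \tilde H(x)$ and compactness of $[0,\pi]$ — this yields $\|\psi(x) - \psi^K(x)\|_m \to 0$ uniformly in $x$.

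Next I would convert this into convergence of the functions. The relations \eqref{psimain} and \eqref{defvarphiK} have identical form for $\varphi_{n,i}$ and $\varphi^K_{n,i}$, hence $\sup_{n,i}|\varphi^K_{n,i}(x) - \varphi_{n,i}(x)| \to 0$ uniformly in $x$ (using that $\{\xi_n\} \in l_2$ is bounded). Feeding this into the representations \eqref{varphifromtilde}, \eqref{Phifromtilde} and their truncations \eqref{varphiKfromtilde}, and splitting the difference into the tail $k > K$ (small by the convergence of these series) and the head $k \le K$ (small by the previous step), I obtain $\varphi^K(x,\la) \to \varphi(x,\la)$ and $\Phi^K(x,\la) \to \Phi(x,\la)$ uniformly for $x \in [0,\pi]$ and $\la$ on compact subsets of $\mathbb C \setminus \{\la_{ni}\}$; once $\sigma^K \to \sigma$ is known, the quasi-derivatives $\varphi^{K[1]} = (\varphi^K)' - \sigma^K\varphi^K$ converge as well.

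The convergence of $r_1^K$ and $r_2^K$ is then the easier part. In \eqref{r1K_series} and \eqref{r2K_series} the contour integrals run over the fixed contour $\Gamma_N$, on which $\varphi^K \to \varphi$ (and $\varphi^{K[1]} \to \varphi^{[1]}$) uniformly, so these integrals converge; the finite sums $\sum_{k=N+1}^K$ converge to the corresponding series of \eqref{r1_series}, \eqref{r2_series} with the tails controlled by Lemma~\ref{lem:conv_of_r}; and the finite products converge to the infinite products again by Lemma~\ref{lem:conv_of_r}. Since a product of uniformly convergent, locally bounded factors converges uniformly on compacts, I conclude $r_1^K(\la) \to r_1(\la)$ and $r_2^K(\la) \to r_2(\la)$ uniformly on compact sets away from the poles. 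Here the $\sigma$-step must precede $r_2^K$, because the quasi-derivative in \eqref{r2K_series} is taken with respect to $\sigma^K$.

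The hard part will be the $L_2$-convergence $\sigma^K \to \sigma$. Writing $\sigma^K(x) - \sigma(x)$ from \eqref{sigmaK_series} and \eqref{sigma_series}, it splits into three pieces: (I) the contour-integral difference $-\frac{1}{\pi i}\oint_{\Gamma_N}\tilde\varphi(x,\mu)(\varphi^K(x,\mu) - \varphi(x,\mu))\hat M(\mu)\,d\mu$, which tends to zero uniformly by the uniform convergence on $\Gamma_N$; (III) the tail $2\sum_{k>K}\sum_{j}(-1)^j\alpha_{kj}(\tilde\varphi_{k,j}(x)\varphi_{k,j}(x) - \tfrac12)$, which tends to zero in $L_2(0,\pi)$ because the full series converges in $L_2$ by Lemma~\ref{lem:sigma}; and (II) the head $-2\sum_{k=N+1}^K\sum_{j}(-1)^j\alpha_{kj}\tilde\varphi_{k,j}(x)(\varphi^K_{k,j}(x) - \varphi_{k,j}(x))$. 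Term (II) is the genuine obstacle: the sup-norm estimate $|\varphi^K_{k,j}(x) - \varphi_{k,j}(x)| \le \|\psi^K(x) - \psi(x)\|_m$ is too crude, since summing $O(K)$ such terms need not vanish. To overcome it I would upgrade the convergence of $\psi^K \to \psi$ to a quadratic (Bessel-type) estimate on the corrections and exploit the oscillatory structure in $x$ — the asymptotics \eqref{varphik}, \eqref{tildevarphik} together with the orthogonality of $\{\cos kx\}$ and the $l_2$-smallness of $\{\xi_k\}_{k>K}$ — precisely in the spirit of the proof of Lemma~\ref{lem:sigma}, so that the head is controlled in $L_2(0,\pi)$ by $l_2$-norms that vanish as $K \to \infty$.
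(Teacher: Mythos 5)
Your overall architecture coincides with the paper's: compare the full and truncated linear systems, convert smallness of the corrections into convergence of $\varphi^K$ and $\Phi^K$, then feed this into the reconstruction formulas; your treatment of $r_1^K$ and $r_2^K$ (fixed contour $\Gamma_N$, tails via Lemma~\ref{lem:conv_of_r}, finite products, $\sigma$-step before $r_2^K$) matches the paper and is sound, as are the contour term (I) and the tail term (III) for $\sigma$. The genuine gap is exactly the term (II) you isolate: your proposal stops at a declaration of intent there, and the mechanism you hint at does not work as stated. Since the corrections $\theta^K_{kj}(x)=\varphi_{k,j}(x)-\varphi^K_{k,j}(x)$ depend on $x$, the head sum $\sum_{k=N+1}^{K}\bigl(\tilde\varphi_{k,0}(x)\theta^K_{k0}(x)\alpha_{k0}-\tilde\varphi_{k,1}(x)\theta^K_{k1}(x)\alpha_{k1}\bigr)$ is \emph{not} a Fourier series with numerical coefficients, so orthogonality/Bessel for $\{\cos kx\}$ yields nothing directly: even after upgrading to $\|\theta^K(x)\|_{l_2}\to 0$ uniformly in $x$, Cauchy--Schwarz against the $O(K)$ bounded factors $\tilde\varphi_{k,j}$ gives only a bound of order $\sqrt{K}\,\|\theta^K(x)\|_{l_2}$, which need not vanish as $K\to\infty$.

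The paper closes this gap differently. Subtracting the systems \eqref{phi_ni} and \eqref{phiK_ni} gives a linear system for $\theta^K$ with free term $\zeta^K$ (the $k>K$ tail), from which, following Lemma~5.6 of \cite{BondTamkang}, one derives the \emph{two} estimates \eqref{thetaK}, uniformly in $x$: $\|\theta^K(x)\|_{l_2}\to 0$ and, crucially, $\|\{\theta^K_{n0}(x)-\theta^K_{n1}(x)\}\|_{l_1}\to 0$. It is the $l_1$-estimate of the \emph{paired differences} that controls the head sum: writing $\alpha_{k0}=\tfrac{2}{\pi}+\varepsilon_k$, $\alpha_{k1}=\tfrac{2}{\pi}$ and using \eqref{varphik}--\eqref{tildevarphik}, the leading contribution is $\tfrac{2}{\pi}\cos kx\,(\theta^K_{k0}(x)-\theta^K_{k1}(x))$, bounded pointwise (hence in $L_2$) by the $l_1$-norm of the differences, while the remaining pieces carry $l_2$-weights $\varepsilon_k$, $\varkappa_k$ and succumb to pointwise Cauchy--Schwarz against $\|\theta^K(x)\|_{l_2}$. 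Your plan mentions only an $l_2$-type upgrade; the pairing of the $j=0$ and $j=1$ components and the $l_1$-difference estimate are absent, and without them the argument for (II) fails. Note also that your identity $(E+\tilde H(x))(\psi(x)-\psi^K(x))=(\tilde H^K(x)-\tilde H(x))\psi^K(x)$ is precisely the paper's $\theta$--$\zeta$ relation transplanted into $m$; what is lost in passing to the sup-norm of $m$ is exactly the weighted summability of the free term that the paper's $l_2$/$l_1$ framework retains, and which is indispensable for the $L_2$-convergence $\sigma^K\to\sigma$.
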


\begin{proof}
The proof for $\sigma(x)$ generalizes the proof of Lemma~5.6 in \cite{BondTamkang} to the case of multiple eigenvalues. Therefore, we outline it briefly.

Let us introduce the following functions $\theta^K(x) = (\theta^K_{ni}(x))_{(n,i)\in V}$:
\begin{gather*}
\theta_{ni}^K(x) = \left\{\begin{array}{ll}
\varphi_{n,i}(x) - \varphi_{n,i}^K(x), \quad & n \le K, \\
0, \quad &  n > K,
\end{array}\right.
\end{gather*}
and $\zeta_K(x) = \{\zeta^K_{ni}(x)\}_{(n,i)\in V}$:
\begin{gather*}
\zeta_{ni}^K(x) = \left\{\begin{array}{ll}
-\sum_{k=K+1}^\infty (\tilde Q_{n,i; k,0}(x)\varphi_{k,0}(x) - \tilde Q_{n,i; k,1}(x)\varphi_{k,1}(x)), \quad & n \le K, \\
0, \quad &  n > K,
\end{array}\right.
\end{gather*}
	
According to the systems \eqref{phi_ni} and \eqref{phiK_ni}, we have for $n \le K$, $i=0, 1$:
\begin{gather*}
\tilde \varphi_{n,i}(x) = \varphi_{n,i}(x) + \sum_{k=1}^\infty (\tilde Q_{n,i; k,0}(x)\varphi_{k,0}(x) - \tilde Q_{n,i; k,1}(x)\varphi_{k,1}(x)), \\
\tilde \varphi_{n,i}(x) = \varphi^K_{n,i}(x) + \sum_{k=1}^K (\tilde Q_{n,i; k,0}(x)\varphi^K_{k,0}(x) - \tilde Q_{n,i; k,1}(x)\varphi^K_{k,1}(x)).
\end{gather*}

The subtraction yields
$$
\theta_{ni}^K(x) = -\sum_{k = 1}^{\infty} (\tilde Q_{n,i; k,0}(x)\theta^K_{k,0}(x) - \tilde Q_{n,i; k,1}(x)\theta^K_{k,1}(x)) + \zeta_{ni}^K(x).
$$

Using the latter relation in the same way as in \cite{BondTamkang}, we can get that 
\begin{equation} \label{thetaK}
\lim_{K \to \infty}\|\theta^K(x)\|_{l_2} = 0, \quad \lim_{K \to \infty} \| \{ \theta_{n0}^K(x) - \theta_{n1}^K(x) \} \|_{l_1} = 0, 
\end{equation}
uniformly by $x \in [0, \pi]$.

Let us also introduce the function $\theta^K(x, \la) := \varphi(x, \la) - \varphi^K(x, \la)$. Using \eqref{varphifromtilde} and \eqref{varphiKfromtilde}, we can get:
\begin{gather*}
\theta^K(x, \mu) = \sum_{i=0}^1 (-1)^i \sum_{k \in I_i, k \le K} \sum_{j=0}^{m_{k_i}-1} \tilde A_{k+j, i}(x, \la)(\varphi^K_{k+j, i}(x) - \varphi_{k+j, i}(x)) \\
 - \sum_{i=0}^1 (-1)^i \sum_{k \in I_i, k > K} \sum_{j=0}^{m_{k_i}-1} \tilde A_{k+j, i}(x, \la)\varphi_{k+j, i}(x).
\end{gather*}

It follows from \eqref{thetaK} and the convergence of the series in \eqref{varphifromtilde} that 
\begin{equation} \label{thetaK2}
\lim_{K \to \infty} \theta^K(x, \mu) = 0.
\end{equation}
uniformly by $x \in [0, \pi]$ and $\mu$ on the compact set $\Gamma_N$.

Subtracting \eqref{sigmaK_series} from \eqref{sigma_series}, we get
\begin{gather*}
\sigma(x) - \sigma^K(x) = -\frac{1}{\pi i} \oint_{\Gamma_N} {\tilde\varphi(x, \mu)(\varphi(x, \mu) - \varphi^K(x, \mu))\hat M(\mu)}d\mu - 2 \sum_{k=N+1}^{K} (\tilde\varphi_{k,0}(x)\theta^K_{k0}(x)\alpha_{k0} \\
- \tilde\varphi_{k,1}(x)\theta^K_{k1}(x)\alpha_{k1}) - 2\sum_{k=K+1}^\infty \Bigg(\alpha_{k0}\bigg(\tilde\varphi_{k,0}(x)\varphi_{k,0}(x) - \frac{1}{2}\bigg) - \alpha_{k1}\bigg(\tilde\varphi_{k,1}(x)\varphi_{k,1}(x) - \frac{1}{2}\bigg)\Bigg)
\end{gather*}

Since the series \eqref{sigma_series} converges by Lemma~\ref{lem:sigma}, then its remainder tends to zero in $L_2(0,\pi)$ and $K \to \infty$.
Taking \eqref{thetaK} and \eqref{thetaK2} into account, we conclude that 
$\sigma^K(x) \to \sigma(x)$ in $L_2(0, \pi)$.

Let us consider the difference
\begin{gather*}
r_1(\la) - r_1^K(\la) = \frac{\prod\limits_{k=1}^K(\la-\la_{k0})}{\prod\limits_{k=M_1+1}^K(\la-\la_{k1})}\Bigg( \prod\limits_{k=K+1}^\infty\frac{\la-\la_{k0}}{\la-\la_{k1}}\bigg(1 - \frac{1}{2\pi i} \oint_{\Gamma_N}{\frac{\tilde \varphi'(\pi, \mu)\varphi(\pi, \mu)}{\la - \mu} M(\mu)}d\mu \\ - \sum_{k=N+1}^{\infty}\frac{\alpha_{k0}\tilde\varphi'_{k,0}(\pi)\varphi_{k,0}(\pi)}{\la - \la_{k0}}\bigg) - 1 + \frac{1}{2\pi i} \oint_{\Gamma_N}{\frac{\tilde \varphi'(\pi, \mu)\varphi^K(\pi, \mu)}{\la - \mu} M(\mu)}d\mu - \sum_{k=N+1}^{K}\frac{\alpha_{k0}\tilde\varphi'_{k,0}(\pi)\varphi^K_{k,0}(\pi)}{\la - \la_{k0}}\Bigg).
\end{gather*}

As the infinite product converges (see Lemma~\ref{lem:conv_of_r}), then 
$$
\lim_{K \to \infty} \prod\limits_{k=K+1}^\infty\frac{\la-\la_{k0}}{\la-\la_{k1}} = 1. 
$$

Here and below, the convergence is uniform by $\la$ on compact sets excluding the eigenvalues.
Denote the finite product $\dfrac{\prod\limits_{k=1}^K(\la-\la_{k0})}{\prod\limits_{k=M_1+1}^K(\la-\la_{k1})}$ as $g(\la)$.

So, we can get:
\begin{gather*}
\lim_{K \to \infty}(r_1(\la) - r_1^K(\la)) = -g(\la)\lim_{K \to \infty}\Bigg( \frac{1}{2\pi i} \oint_{\Gamma_N}{\frac{\tilde \varphi'(\pi, \mu)\theta^K(\pi, \mu)}{\la - \mu}\hat M(\mu)}d\mu \\+ \sum_{k=N+1}^{K}\frac{\alpha_{k0}\tilde\varphi'_{k,0}(\pi)\theta^K_{k,0}(\pi)}{\la - \la_{k0}} + \sum_{k=K+1}^{\infty}\frac{\alpha_{k0}\tilde\varphi'_{k,0}(\pi)\varphi^K_{k,0}(\pi)}{\la - \la_{k0}}\Bigg).
\end{gather*}

Recall that the series $\sum\limits_{k=1}^{\infty}\dfrac{\alpha_{k0}\tilde\varphi'_{k,0}(\pi)\varphi^K_{k,0}(\pi)}{\la - \la_{k0}}$ converges by Lemma~\ref{lem:conv_of_r}.
Taking \eqref{thetaK} and \eqref{thetaK2} into account, we  conclude that
$r_1(\la) - r_1^K(\la) \to 0$, $K \to \infty$. 
The proof for $r_2^K(\la)$ is similar. Thus, we have proved the absolute and uniform convergence on compact sets in $\mathbb C \setminus \{ \la_{ni}\}_{n \ge 1,\, i = 0, 1}$. Since $r_1^K(\la)$ and $r_2^K(\la)$ are polynomials and so have no singularities, then the convergence is uniform on compact sets in the whole $\la$-plane.
\end{proof}

\begin{remark} \label{rem:poly}
Lemmas~\ref{lem:degree} and~\ref{coeffscon} together imply that the right-hand sides of \eqref{r1_series} and \eqref{r2_series} are polynomials of degree $M_1$ and not greater than $M_1$, respectively. Moreover, the polynomial given by \eqref{r1_series} has the leading coefficient $1$.
\end{remark}

Furthermore, one can prove the following lemma on the continuous dependence of the spectral data on $\sigma$, $r_1$, and $r_2$.

\begin{lem}\label{spectraldatacon}
Let $\sigma^K(x)$ and $\sigma(x)$ are any functions of $L_2(0,\pi)$ and let
$r_j^K(\la)$ and $r_j(\la)$ for $j = 1, 2$ be arbitrary polynomials (not necessarily given by the formulas \eqref{sigma_series}--\eqref{r2_series}, \eqref{sigmaK_series}--\eqref{r2K_series}) such that
$\sigma^K(x) \to \sigma(x)$ in $L_2(0, \pi)$, $r^K_1(\la) \to r_1(\la)$ and $r^K_2(\la) \to r_2(\la)$ uniformly on compact sets
as $K \to \infty$, $r_1^K(\la)$ have degree $M_1$ and the leading coefficient $1$ and $r_2^K(\la)$ have degree not greater than $M_1$. 
Then $\{\la^K_n, \alpha^K_n\} \to \{\la_n, \alpha_n\}$ as $K \to \infty$ in the following sense: for each fixed $n \ge 1$, $\la^K_n \to \la_n$ and, if the multiplicity of $\la^K_n$ coincides with the multiplicity of $\la_n$, then $\alpha^K_n \to \alpha_n$ as $K \to \infty$.
\end{lem}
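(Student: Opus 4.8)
The plan is to prove continuity of the map $(\sigma, r_1, r_2) \mapsto \{\la_n, \alpha_n\}$ by first establishing continuous dependence of the solutions of \eqref{eqv} on the coefficients, and then reading off the eigenvalues as zeros of the characteristic function and the weight numbers as Laurent coefficients of the Weyl function. Denote by $\varphi^K$, $\psi^K$ the solutions of equation \eqref{eqv} with $\sigma$ replaced by $\sigma^K$ and the quasi-derivative understood as $y^{K[1]} = y' - \sigma^K y$, subject to the initial conditions $\varphi^K(0,\la) = 1$, $\varphi^{K[1]}(0,\la) = 0$ and the terminal conditions $\psi^K(\pi,\la) = r_1^K(\la)$, $\psi^{K[1]}(\pi,\la) = -r_2^K(\la)$. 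Rewriting \eqref{eqv} as the first-order system for the vector $(y, y^{[1]})$,
\begin{gather*}
\frac{d}{dx}\begin{pmatrix} y \\ y^{[1]} \end{pmatrix} = \begin{pmatrix} \sigma & 1 \\ -(\sigma^2 + \la) & -\sigma \end{pmatrix}\begin{pmatrix} y \\ y^{[1]} \end{pmatrix},
\end{gather*}
and likewise for $\mathcal L^K$, I observe that $\sigma^K \to \sigma$ in $L_2(0,\pi)$ implies $\sigma^K \to \sigma$ in $L_1$ and, by the Cauchy--Schwarz inequality, $(\sigma^K)^2 \to \sigma^2$ in $L_1$. Hence the coefficient matrices converge in $L_1(0,\pi)$, uniformly for $\la$ on compact sets. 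By the standard continuous dependence of solutions of linear systems with $L_1$-coefficients (Gronwall's inequality applied to the corresponding integral equation), together with the convergence $\psi^K(\pi,\la) = r_1^K(\la) \to r_1(\la)$ and $\psi^{K[1]}(\pi,\la) = -r_2^K(\la) \to -r_2(\la)$, I obtain that $\psi^K(x,\la) \to \psi(x,\la)$ and $\psi^{K[1]}(x,\la) \to \psi^{[1]}(x,\la)$ uniformly on compact subsets of $[0,\pi] \times \mathbb C$; the analogous convergence holds for $\varphi^K$, whose initial data are $\la$-independent.

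Evaluating \eqref{charfun} at $x = 0$ gives $\Delta(\la) = -\psi^{[1]}(0,\la)$, so the previous step yields $\Delta^K(\la) \to \Delta(\la)$ uniformly on compact sets; moreover, on every compact set disjoint from the poles of $M$ the denominator $\psi^{[1]}(0,\la)$ is bounded away from zero, whence $M^K(\la) = -\psi^K(0,\la)/\psi^{K[1]}(0,\la) \to M(\la)$ uniformly there. To locate the eigenvalues, I fix $n$ and choose $N$ so large that $\la_n \in \mathrm{int}\,\Gamma_N$. On $\Gamma_N$ the bound \eqref{abs_delta} (available by the asymptotics \eqref{asympt_charfun}) provides a strictly positive lower bound for $|\Delta|$, so $|\Delta^K - \Delta| < |\Delta|$ on $\Gamma_N$ for all sufficiently large $K$. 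Rouch\'e's theorem then shows that $\Delta^K$ and $\Delta$ have equally many zeros inside $\Gamma_N$, which fixes the numbering and prevents eigenvalues from escaping to or arriving from infinity, while Hurwitz's theorem applied on a small disk about each $\la_k$ localizes the individual zeros. Consequently $\la_n^K \to \la_n$ for every fixed $n$.

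For the weight numbers I extract the Laurent coefficients from \eqref{weyl_sum}: if $\la_n$ is a pole of $M$ of order $m$, then, choosing a small circle $\gamma$ about $\la_n$ enclosing no other pole of $M$,
\begin{gather*}
\alpha_{n+j} = \frac{1}{2\pi i}\oint_{\gamma} (\la - \la_n)^{j}\, M(\la)\,d\la, \quad j = \overline{0, m-1},
\end{gather*}
and $\alpha_{n+j}^K$ is given by the same formula with $M$ and $\la_n$ replaced by $M^K$ and $\la_n^K$. Under the hypothesis that the multiplicity of $\la_n^K$ equals that of $\la_n$, for large $K$ the single pole $\la_n^K$ of order $m$ lies inside $\gamma$ while $M^K$ remains holomorphic on $\gamma$ and, since $\Delta^K$ is uniformly bounded away from zero on $\gamma$, converges uniformly to $M$ there. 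Together with $\la_n^K \to \la_n$, this gives convergence of the contour integrals and hence $\alpha_{n+j}^K \to \alpha_{n+j}$.

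I expect the main obstacle to be the weight-number step rather than the eigenvalue step. When a multiple eigenvalue of $\mathcal L$ is approximated by a cluster of lower-multiplicity eigenvalues of $\mathcal L^K$, the individual Laurent coefficients of $M^K$ need not converge to those of $M$, which is exactly why the multiplicity-matching hypothesis is imposed; verifying that under this hypothesis the cluster collapses into a single pole of the correct order is the delicate point. The quantitative underpinning of both the Rouch\'e and the residue arguments is the uniform (in $K$) lower bound for $|\Delta^K|$ on the fixed contours $\Gamma_N$ and $\gamma$, which follows from the uniform convergence $\Delta^K \to \Delta$ combined with the lower bound \eqref{abs_delta} for $|\Delta|$.
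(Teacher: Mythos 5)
The paper itself states Lemma~\ref{spectraldatacon} \emph{without proof} (``Furthermore, one can prove the following lemma\dots''), so there is no in-paper argument to compare against; judged on its own merits, your proof is essentially correct and follows the natural route. The reduction to the first-order system for $(y, y^{[1]})$ is the right way to handle the singular potential, the Cauchy--Schwarz step giving $(\sigma^K)^2 \to \sigma^2$ in $L_1$ is valid, the identity $\Delta(\la) = -\psi^{[1]}(0,\la)$ (evaluate \eqref{charfun} at $x=0$) correctly converts solution convergence into locally uniform convergence $\Delta^K \to \Delta$, and your residue formulas for $\alpha_{n+j}$ are consistent with \eqref{weyl_sum}. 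Moreover, the ``delicate point'' you flag at the end is in fact already resolved by your own Rouch\'e count: on a small circle $\gamma$ around $\la_n$ enclosing no other eigenvalue of $\mathcal L$, $\Delta^K$ has exactly $m_n$ zeros for all large $K$; since $\la_n^K \to \la_n$ lies inside $\gamma$ and is assumed to have multiplicity $m_n$, it exhausts that count, so $M^K$ has a single pole of order $m_n$ inside $\gamma$ (using that $\psi^K(0,\cdot)$ and $\psi^{K[1]}(0,\cdot)$ have no common zeros, as in Lemma~1 of \cite{FrYu}), and the convergence of the contour integrals follows as you say.

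The one step that needs tightening is the enumeration. Rouch\'e inside a single fixed $\Gamma_N$ shows the zero counts of $\Delta^K$ and $\Delta$ agree there, but it does not by itself ``prevent eigenvalues from escaping to or arriving from infinity'': the numbering of $\{\la_n^K\}$ is defined by the asymptotics of Lemma~\ref{eigen_asymp}, and a priori, for moderate $K$, a low-index eigenvalue of $\mathcal L^K$ could sit outside $\Gamma_N$ while a high-index one sits inside, shifting the correspondence. To rule this out you need the asymptotics \eqref{asympt_charfun}, and hence a lower bound of the type \eqref{abs_delta}, with constants \emph{uniform in $K$}, valid on all circles $\Gamma_{N'}$ with $N' \ge N_0$ and $N_0$ independent of $K$; then Rouch\'e against $\Delta^0(\la) = -\rho^{2M_1+1}\sin\rho\pi$ on every such $\Gamma_{N'}$ pins down the tail of the enumeration uniformly in $K$, after which your fixed-$N$ argument gives $\la_n^K \to \la_n$ for each $n$. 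This uniformity is exactly where the hypotheses that $r_1^K$ has degree $M_1$ with leading coefficient $1$ and $\deg r_2^K \le M_1$ enter (together with the boundedness of $\|\sigma^K\|_{L_2}$ and of the polynomial coefficients, which your assumed convergences supply) --- without them, say if $r_1^K$ had spurious higher-degree terms with small leading coefficients, extra eigenvalues would appear and the numbering would break. Your writeup uses these hypotheses only implicitly, so this connection should be made explicit; with it, the proof is complete.
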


\begin{proof}[Proof of Theorem~\ref{mainthm}]
By virtue of Lemmas~\ref{lem:sigma} and~\ref{lem:conv_of_r}, the right-hand sides of \eqref{sigma_series}, \eqref{r1_series}, and \eqref{r2_series} converge in the appropriate sense. Denote their limits by $\sigma^{\diamond}(x)$, $r_1^{\diamond}(\la)$, and $r_2^{\diamond}(\la)$, respectively. We have to prove that $\sigma^{\diamond} = \sigma$, $r_1^{\diamond} = r_1$, and $r_2^{\diamond} = r_2$. Consider the problem $\mathcal L^{\diamond} := \mathcal L(\sigma^{\diamond}, r_1^{\diamond}, r_2^{\diamond})$. Denote its spectral data by $\{ \la_n^{\diamond}, \alpha_n^{\diamond}\}_{n \ge 1}$. Furthermore, consider the data $\{ \la_n^K, \alpha_n^K\}_{n \ge 1}$ and the corresponding functions $\sigma^K(x)$, $r_1^K(\la)$, and $r_2^K(\la)$
for all sufficiently large $K$. By virtue of Lemma~\ref{coeffscon}, the coefficients $(\sigma^K, r_1^K, r_2^K)$ converge to $(\sigma^{\diamond}, r_1^{\diamond}, r_2^{\diamond})$ as $K \to \infty$. Then, Lemma~\ref{spectraldatacon} implies that $\{ \la_n^K, \alpha_n^K\} \to \{ \la_n^{\diamond}, \alpha_n^{\diamond}\}$, $K \to \infty$. On the other hand, it follows from \eqref{sdK} that $\{ \la_n^K, \alpha_n^K\} \to \{ \la_n, \alpha_n\}$, $K \to \infty$. Hence $\la_n = \la_n^{\diamond}$ and $\alpha_n = \alpha_n^{\diamond}$ for all $n \ge 1$. In view of the uniqueness theorem (Theorem~\ref{thm:uniq}), we conclude that $\sigma(x)= \sigma^{\diamond}(x)$ a.e. on $(0,\pi)$ and $r_j(\la) \equiv r_j^{\diamond}(\la)$, $j = 1, 2$. This implies the validity of the reconstruction formulas \eqref{sigma_series}--\eqref{r2_series} for the coefficients of $\mathcal L$.
\end{proof}

\section{Solution of the inverse problem} \label{sec:solution_alg}

In this section, we provide a reconstruction procedure for solving Inverse Problem~\ref{ip:main} and illustrate it by a simple example.
The proof method of the Theorem~\ref{mainthm} implies the following constructive algorithm.

\begin{alg}  \label{alg:sing}
Let $M^1(\la)$, $p_1(\la)$, and $p_2(\la)$ be given and, for definiteness, $M_1 = M_2$. We have to find $\sigma(x)$, $r_1(\la)$, and $r_2(\la)$.
\begin{enumerate} 
  \item Find $M(\la)$ using \eqref{pass}: $M(\la) = \dfrac{p_1(\la)M^1(\la)}{1+p_2(\la)M^1(\la)}$.
  \item Find $\{ \la_n, \alpha_n\}_{n \ge 1}$ from $M(\la)$: $\{\la_n\}_{n \ge 1}$ are the poles of $M(\la)$ and 
  \begin{gather*}
  \alpha_{k+j} = \mathop{\mathrm{Res}}\limits_{\la = \la_{k}} M(\la)(\la - \la_k)^j, \quad k \in I, \quad j = \overline{0, m_k-1}.
  \end{gather*}
  \item Find $M_1$ using the asymptotics \eqref{la_asymp} of $\{ \la_n\}_{n \ge 1}$: 
  \begin{equation} \label{findM1}
  	M_1 = \lim_{n \to \infty} (n - 1 - \sqrt{\la_n}).
  \end{equation}
  \item Take the model problem \eqref{eqv_tilde} and find $\{\tilde \la_n, \tilde \alpha_n\}_{n \ge 1}$ by \eqref{tildesd}, $\tilde\varphi(x, \la) = \cos \rho x$, $\tilde M(\la)$ and $\tilde \Phi(x, \la)$ by \eqref{tildeMmain}. 
  \item Find $\tilde A(x, \la, \mu)$ by \eqref{funcA}, the functions 
\begin{equation} \label{findA}	  
  \tilde A_{k+j,i} (x, \la) = \mathop{\mathrm{Res}}\limits_{\mu = \la_{ki}} \tilde A(x, \la, \mu)(\mu - \la_{ki})^j, \quad k \in I_i, \quad j = \overline{0, m_k-1}, \quad i = 0, 1,
\end{equation}  
  and $\tilde Q_{n+p, i; k, j} (x) = \frac{1}{p!} \frac{\partial^p}{\partial \la^p} \tilde A_{k, j}(x, \la)\bigg|_{\la=\la_{n, i}}, \quad n \in I_i, \quad p=\overline{0, m_{ni}-1}, \quad i=0, 1.$.
  \item Build $\tilde \psi(x) = (\tilde \psi_v(x))_{v \in V}$ using \eqref{psimain} and the operator $\tilde H(x) = (\tilde H_{u,v}(x))_{u,v \in V}$ using \eqref{Hmatr}.
  \item Solve the main equation \eqref{main_equation} and so find $\psi(x) = (\psi_v(x))_{v \in V}$.
  \item Get $\varphi_{n,i}(x)$ from $\psi_{ni}(x)$, $n \ge 1$, $i=0, 1$:
  \begin{gather*}
  \varphi_{n,1}(x) = \psi_{n1}(x), \quad
  \varphi_{n,0}(x) = \xi_n \psi_{n0}(x) + \psi_{n1}(x).
  \end{gather*}
  \item Build the function $\varphi(x, \la)$ using \eqref{varphifromtilde}.
  \item Find $\sigma(x)$, $r_1(\la)$, and $r_2(\la)$ using \eqref{sigma_series}--\eqref{r2_series}.
\end{enumerate}
\end{alg}

Algorithm~\ref{alg:sing} is theoretical. Relying on it, one can study existence and  stability of the inverse problem solution as well as develop a numerical method for reconstruction. However, these issues require a separate research. Note that, in practice, only a finite amount of spectral data $\{ \la_n, \alpha_n\}_{n = 1}^K$ are available. Therefore, one can use the finite linear system \eqref{mainK} instead of the main equation \eqref{main_equation} and then apply the formulas \eqref{sigmaK_series}, \eqref{r1K_series}, and \eqref{r2K_series} to find an approximate solution $(\sigma^K, r_1^K, r_2^K)$. Furthermore, it is worth noting that, in a numerical algorithm, it is not necessary to explicitly solve the main equation. One can use the iterative technique of \cite{IgYu}.

\smallskip

\textbf{Example.} Suppose that $\alpha_2$ and $\lambda_2$ are some given non-zero complex numbers, $\la_2 \ne n^2$, $n \in \mathbb N$.
Let $p_1(\la)=1$, $p_2(\la)=0$, $M^1(\la) = \dfrac{\alpha_2}{\la-\la_2}+\dfrac{\cos \rho \pi}{\rho \sin\rho\pi}$.
 Then \eqref{pass} implies
$$
M(\la) = \dfrac{\alpha_2}{\la-\la_2}+\dfrac{\cos \rho \pi}{\rho \sin\rho\pi} = \dfrac{1}{\pi\la} + \dfrac{\alpha_2}{\la-\la_2}+\dfrac{2}{\pi}\sum_{n=3}^{\infty}\dfrac{1}{\la-(n-2)^2}.
$$

Due to Lemma~\ref{weyl_func}, the eigenvalues and the weight numbers equal
$$
\la_1=0, \quad \la_2, \quad \la_n = (n-2)^2, \, n \ge 3, \quad \text{and} \quad
\alpha_1=\dfrac{1}{\pi}, \quad \alpha_2, \quad \alpha_n = \dfrac{2}{\pi}, \, n \ge 3,
$$
respectively. In other words, for $n \in \mathbb N \setminus \{ 2\}$, the spectral data $\{ \la_n, \alpha_n\}$ coincide with the spectral data $\{ \tilde \la_n, \tilde \alpha_n\}$ of the model problem (see \eqref{tildesd}) and, for $n = 2$, with the given complex numbers $\{ \la_2, \alpha_2\}$.

By formula \eqref{findM1}, we find $M_1 = 1$.
Next, we can construct the function $\tilde A(x, \la, \mu)$ by \eqref{funcA}:
$$
\tilde A(x, \la, \mu) = \dfrac{\alpha_2}{\mu-\la_2}\int_0^x {\tilde \varphi(t, \la) \tilde \varphi(t, \mu)} dt.
$$

Recall that $\tilde \varphi(t, \la) = \cos(\sqrt \la t)$.
As $\tilde A(x, \la, \mu)$ has the only pole $\mu = \la_2$, then \eqref{findA} implies
\begin{equation} \label{A20}
\def\arraystretch{1.5}
\begin{array}{l}
\tilde A_{2,0} (x, \la)  = \alpha_2\int_0^x \cos (\sqrt \la t) \cos(\sqrt{\la_2} t) dt, \\
\tilde A_{k,i} (x, \la)  = 0, \quad (k,i) \ne (2,0).
\end{array}
\end{equation}

Furthermore, since $\la_{n0} = \la_{n1}$ and $\alpha_{n0} = \alpha_{n1}$ for $n \ne 2$, one can easily check that the summation terms for $k \ne 2$ in \eqref{series} and \eqref{sigma_series} vanish. Therefore, instead of the infinite system \eqref{series}, we can consider the following system of size $(2 \times 2)$:
\begin{equation} \label{sys2}
\begin{cases}
\tilde \varphi_{2,0}(x)  = \varphi_{2,0}(x) + \tilde Q_{2,0; 2,0}(x) \varphi_{2,0}(x) - \tilde Q_{2,0; 2,1}(x) \varphi_{2,1}(x), \\
\tilde \varphi_{2,1}(x) = \varphi_{2,1}(x) + \tilde Q_{2,1; 2,0}(x) \varphi_{2,0}(x) - \tilde Q_{2,1; 2,1}(x) \varphi_{2,1}(x).
\end{cases}
\end{equation}

Note that, in this simple case, we do not transform the system \eqref{sys2} to the main equation \eqref{main_equation}, because the series in \eqref{series} turn into finite sums and we do not need to analyze their convergence. Using \eqref{A20}, we find the entries $\tilde Q_{n,i; k,j}(x)$ used in \eqref{sys2}:
\begin{align*}
\tilde Q_{2, 0; 2, 0}(x) & = \dfrac{\alpha_2}{4}\Bigr(\dfrac{\sin (2 \sqrt{\la_2}x)}{\sqrt{\la_2}} + 2x\Bigl), \\
\tilde Q_{2, 1; 2, 0}(x) & = \dfrac{\alpha_2}{\la_2}\Bigr(\dfrac{\sin (\sqrt{\la_2}x)}{\sqrt{\la_2}} - \dfrac{1}{2}\sqrt{\la_2}x^2\sin(\sqrt{\la_2}x) - x\cos(\sqrt{\la_2}x)\Bigl), \\
\tilde Q_{2,1; 2,0}(x) & = \tilde Q_{2,1; 2,1}(x) = 0.
\end{align*}

Solve the main equations \eqref{sys2}:
\begin{align} \label{varphi20}
\varphi_{2, 0}(x) & = \dfrac{\cos(\sqrt{\la_2}x)}{1+\dfrac{\alpha_2}{4}\Bigr(\dfrac{\sin (2 \sqrt{\la_2}x)}{\sqrt{\la_2}} + 2x\Bigl)}, \\ \nonumber
\varphi_{2, 1}(x) & = 1 - \tilde Q_{2, 1; 2, 0}(x)\varphi_{2, 0}(x).
\end{align}

Clearly, the solution is well-defined if and only if
\begin{equation} \label{cond}
1+\dfrac{\alpha_2}{4}\Bigr(\dfrac{\sin (2 \sqrt{\la_2}x)}{\sqrt{\la_2}} + 2x\Bigl) \ne 0
\end{equation}
for all $x \in [0,\pi]$. We have to impose this requirement on the given numbers $\la_2$ and $\alpha_2$. Note that, for each complex $\la_2 \ne 0$, the relation \eqref{cond} holds for all complex $\alpha_2 \ne 0$ with sufficiently small $|\alpha_2|$. 

Then, we find $\sigma(x)$ by \eqref{sigma_series}:
\begin{align} \nonumber
\sigma(x) & = - 2\sum_{j = 0}^1 (-1)^j\alpha_{2j}\Bigg(\tilde \varphi_{2,j}(x)\varphi_{2,j}(x) - \frac{1}{2}\Bigg) \\ \label{sigma-ex} & = \alpha_2 - \dfrac{8\alpha_2\sqrt{\la_2}\cos^2(\sqrt{\la_2}x)}{4\sqrt{\la_2}+\alpha_2\sin(2\sqrt{\la_2}x)+2x\sqrt{\la_2}}.
\end{align}

Next, consider the reconstruction formulas \eqref{r1_series} and \eqref{r2_series} for the polynomials. The integral parts vanish, since all the eigenvalues $\{ \la_n\}$ and $\{ \tilde \la_n\}$ are simple. (Although $\tilde \la_1 = \tilde \la_2 = 0$, we have $\tilde \alpha_2 = 0$, so, in fact, $\la = 0$ is a simple pole of $\tilde M(\la)$). Hence, one can take $N = 0$. Recall that $\la_{k0} = \la_{k1}$ for $k \ne 2$. Then, $\tilde \varphi_{k,0}'(\pi) = \tilde \varphi_{k,1}'(\pi) = 0$, $k \ne 2$. Consequently, the reconstruction formulas \eqref{r1_series} and \eqref{r2_series} turn into
\begin{align*}
r_1(\la) & = (\la - \la_{20})\Bigg(1 - \frac{\alpha_{20}\tilde\varphi'_{2,0}(\pi)\varphi_{2,0}(\pi)}{\la - \la_{20}} \Bigg), \\
r_2(\la) & = (\la - \la_{20}) \Bigg(\frac{\alpha_{20}\tilde \varphi'_{2,0} (\pi) \varphi^{[1]}_{2,0} (\pi)}{\la-\la_{20}} - \sum_{j=0}^1 (-1)^j\alpha_{2j}(\tilde \varphi_{2,j} (\pi) \varphi_{2,j} (\pi) - 1)\Bigg).
\end{align*}

Calculations show that
\begin{align*}
r_1(\la) = & \la - \la_2 + \dfrac{2\alpha_2\la_2\sin(2\sqrt{\la_2}\pi)}{4\sqrt{\la_2}+\alpha_2\sin(2\sqrt{\la_2}\pi)+2\pi\sqrt{\la_2}}, \\
r_2(\la) = & \alpha_2\la\bigl(1-\varphi_{2,0}(\pi)\cos(\sqrt{\la_2}\pi)\bigr) + \alpha_2\bigl(\la_2\varphi_{2,0}(\pi)\cos(\sqrt{\la_2}\pi)-\la_2 \\ & +\varphi'_{2,0}(\pi)\sin(\sqrt{\la_2}\pi)-\sigma(\pi)\varphi_{2,0}(\pi)\sin(\sqrt{\la_2}\pi)\bigr),
\end{align*}
where $\varphi_{2,0}(x)$ and $\sigma(x)$ are given in \eqref{varphi20} and \eqref{sigma-ex}, respectively. Thus, we have recovered the polynomials of degree $1$.

\section{Regular potential} \label{sec:reg}

The reconstruction formulas of Theorem~\ref{mainthm} are novel even for the case of regular potential $q \in L_2(0,\pi)$, which was studied by Freiling and Yurko~\cite{FrYu}. Therefore, in this section, we consider the problem statement from \cite{FrYu} and apply our results to this inverse problem.

Let us introduce the boundary problem $\mathscr {L}^1 = \mathscr {L}^1(q, p_1, p_2, \check r_1, \check r_2)$:
\begin{gather} \label{eqvreg}
    -y''(x) + q(x)y(x) = \la y(x), \quad x \in (0, \pi), \\ \label{bcreg}
    p_1(\la)y'(0) - p_2(\la)y(0) = 0, \quad \check r_1(\la)y'(\pi) + \check r_2(\la)y(\pi) = 0,
\end{gather}
were $q$ complex-valued potential of $L_2(0, \pi)$, the boundary condition \eqref{bcreg} at $x = 0$ contains relatively prime polynomials $p_j(\la)$, $j = 1, 2$, which can be represented in the form \eqref{polsp}, and the BC at $x = \pi$, relatively prime polynomials $\check r_j(\la)$, $j = 1, 2$, which can be represented in the form \eqref{polsr}. We also suppose that $N_1 = N_2$, $M_1 = M_2$, $a_{N_1} = c_{M_1} = 1$, and $b_{N_2} \ne 0$.

Let us define the function $\psi(x, \la)$ as the solution of equation \eqref{eqvreg} satisfying the initial conditions $\psi(\pi, \la) = \check r_1(\la)$, $\psi'(\pi, \la) = -\check r_2(\la)$.
It can be easily seen that the eigenvalues of the problem $\mathscr {L}^1$ coincide with the zeros of the entire characteristic function
\begin{gather*}
    \Delta^1(\la) = p_1(\la) \psi'(0, \la) - p_2(\la)\psi'(0, \la).
\end{gather*}

Let us introduce the auxiliary problem $\mathscr {L}_0 = \mathscr {L}_0(q, \check r_1, \check r_2)$:
\begin{gather*}
    -y''(x) + q(x)y(x) = \la y(x), \quad x \in (0, \pi), \\
    y(0) = 0, \quad \check r_1(\la)y'(\pi) + \check r_2(\la)y(\pi) = 0.
\end{gather*}

The characteristic function of this problem has the form $\Delta_0(\la) = \psi(0, \la)$.
Let us introduce the Weyl function $M^1(\la)$:
\begin{gather*}
    M^1(\la) = -\frac{\Delta_0(\la)}{\Delta^1(\la)}.
\end{gather*}

Note that Freiling and Yurko in \cite{FrYu} assumed that the zeros of the polynomials $p_1(\la)$ and $p_2(\la)$ are known a priori. It means that the polynomial $p_1(\la)$ is given (its leading coefficient equals $1$) and the polynomial $p_2(\la)$ can be easily found if we recover its leading coefficient. Therefore, the inverse problem of \cite{FrYu} can be formulated as follows.

\begin{ip} \label{ip:mainreg}
Given the Weyl function $M^1(\la)$, the polynomial $p_1(\la)$ and the zeros $\{ z_j \}_{j = 1}^{N_2}$ of $p_2(\la)$, find $q(x)$, $\check r_1(\la)$, $\check r_2(\la)$, and $b_{N_2}$.
\end{ip}

We can find the coefficient $b_{N_2}$ from the  asymptotics for $M^1(\la)$ (see \cite{FrYu}):
\begin{gather} \label{bN2}
M^1(\la) = \frac{1}{i\rho\la^{N_1}}\bigg(1-\frac{b_{N_2}}{i\rho} + o(\rho^{-1})\bigg).
\end{gather}

As we know $M^1(\la)$, $p_1(\la)$, $p_2(\la)$, then we can find $M(\la)$ by \eqref{pass} for problem $\mathscr {L} = \mathscr {L}(q, \check r_1, \check r_2)$:
\begin{gather*}
    -y''(x) + q(x)y(x) = \la y(x), \quad x \in (0, \pi), \\
    y'(0) = 0, \quad \check r_1(\la)y'(\pi) + \check r_2(\la)y(\pi) = 0,
\end{gather*}

This problem is equivalent to the next one:
\begin{gather} \label{eqv_sing}
    -(y^{[1]})'(x) - \sigma(x)y^{[1]}(x) - \sigma^2(x)y(x) = \la y(x), \quad x \in (0, \pi), \\ \label{bc_sing}
    y^{[1]}(0) = 0, \quad r_1(\la)y^{[1]}(\pi) + r_2(\la)y(\pi) = 0,
\end{gather}
where $\sigma(x) = \int_0^x q(t) \, dt$, $\sigma \in W_2^1[0,\pi]$, $\sigma(0) = 0$, $r_1(\la) = \check r_1(\la)$, and $r_2(\la) = \check r_2(\la) + \sigma(\pi)$. Obviously, $\deg(r_2) = \deg(\check r_2)$.

Thus, we arrive at the following algorithm for solving Inverse Problem~\ref{ip:mainreg}.

\begin{alg} \label{alg:reg}
Let the Weyl function $M^1(\la)$, the polynomial $p_1(\la)$ and the zeros $\{z_j\}_{j=1}^{N_2}$ be given. We have to find $\sigma(x)$, $r_1(\la)$, $r_2(\la)$, and $b_{N_2}$.
\begin{enumerate}
  \item Find $b_{N_2}$ using \eqref{bN2}.
  \item Construct $p_2(\la) = b_{N_2} \prod\limits_{j = 1}^{N_2} (z - z_j)$.
  \item Pass to the problem \eqref{eqv_sing}--\eqref{bc_sing} by finding $M(\la)$ using \eqref{pass}.
  \item By using $M(\la)$, build $\sigma(x)$, $r_1(\la)$, and $r_2(\la)$ following the steps 2-10 of Algorithm~\ref{alg:sing}.
  \item Find $q(x) := \sigma'(x)$, $\check r_1(\la) := r_1(\la)$, and $\check r_2(\la) := r_2(\la) - \sigma(\pi)$.
\end{enumerate}
\end{alg}

In particular, Algorithm~\ref{alg:reg} implies the following formula for $q(x)$:
\begin{align} \notag 
q(x) &= -\frac{1}{\pi i}\oint_{\Gamma_N} {\Bigg(\tilde \varphi'(x, \mu)\varphi(x, \mu) + \tilde \varphi(x, \mu)\varphi'(x, \mu)\Bigg)\hat M(\mu)}d\mu \\ \label{sigma_series_reg}
& - 2 \sum_{k=N+1}^\infty \sum_{j=0}^1 (-1)^j\alpha_{kj}(\tilde \varphi'_{k,j}(x)\varphi_{k,j}(x) + \tilde \varphi_{k,j}(x)\varphi'_{k,j}(x)).
\end{align}

It can be shown that the series in \eqref{sigma_series_reg} converges in $L_2(0,\pi)$.
Calculation of the contour integral by the Residue theorem shows that \eqref{sigma_series_reg} is analogous to the formula (60) of \cite{But13} for recovering the regular potential of the Sturm-Liouville problem with the Dirichlet BCs. In the case of simple eigenvalues, the formula \eqref{sigma_series_reg} coincide with the reconstruction formula (1.6.25) of the book \cite{FY01} by Freiling and Yurko for the Sturm-Liouville problem with the Robin BCs. However, for the case of the polynomials in the BCs, the formula \eqref{sigma_series_reg} is new.

Let us consider the special case $M_1 = M_2 = 0$, which corresponds to the Robin BC $y^{[1]}(\pi) + b_0 y(\pi) = 0$.
Then, in view of Remark~\ref{rem:poly}, the formulas \eqref{r1_series} and \eqref{r2_series} generate the constants $r_1(\la) = 1$ and $r_2(\la) = b_0$, respectively. In order to find the constant $b_0$, we pass to the limit as $|\la| \to \infty$ in \eqref{r2_series} and so obtain the formula
$$
b_0 = -\frac{1}{2\pi i} \oint_{\Gamma_N} (\tilde \varphi(\pi, \mu) \varphi(\pi, \mu) - 1) \hat M(\mu) \, d\mu - \sum_{k = N+1}^{\infty} (-1)^j \alpha_{kj} (\tilde \varphi_{k,j}(\pi) \varphi_{k,j}(\pi) - 1),
$$
which generalizes the formula (5.15) from \cite{BondTamkang} to the case of multiple eigenvalues. In the case of regular potential, for the recovery of $\check b_0$ in the Robin BC $y'(\pi) + \check b_0 y(\pi) = 0$, we have
$$
\check b_0 = b_0 - \sigma(\pi) = \frac{1}{2\pi i} \oint_{\Gamma_N} \tilde \varphi(\pi, \mu) \varphi(\pi, \mu) \, d\mu  + \sum_{j = 0}^1 \sum_{k = N+1}^{\infty} (-1)^j \alpha_{kj} \tilde \varphi_{k,j}(\pi) \varphi_{k,j}(\pi).
$$

This formula generalizes the relation (1.6.25) from \cite{FY01} for the recovery of the Robin BC coefficient $H$ to the case of multiple eigenvalues.
Thus, our reconstruction formulas generalize the previously known results for constant coefficients in the BCs, and they are novel for the case of polynomials.

It is worth mentioning that Freiling and Yurko \cite{FrYu} suggested to recover the unknown coefficients as follows:
\begin{equation} \label{recFY}
q(x) = \la + \frac{\varphi''(x, \la)}{\varphi(x, \la)}, \qquad \frac{r_2(\la)}{r_1(\la)} = -\frac{\Phi'(\pi, \la)}{\Phi(\pi, \la)},
\end{equation}
where $\varphi(x, \la)$ and $\Phi(x, \la)$ were previously found by using the analogs of the formulas \eqref{varphifromtilde} and \eqref{Phifromtilde}, respectively. However, our reconstruction formulas \eqref{sigma_series}--\eqref{r2_series} are more convenient than \eqref{recFY} for investigation of the inverse problem solvability and stability, because our formulas guarantee that the resulting function $\sigma(x)$ belongs to $L_2(0,\pi)$ and $r_j(\la)$, $j = 1,2$, are polynomials of desired degrees (see Remark~\ref{rem:poly}). 

\medskip

\noindent\textbf{Funding}: This work was supported by Grant 21-71-10001 of the Russian Science Foundation, https://rscf.ru/en/project/21-71-10001/.

\medskip

\noindent Egor Evgenevich Chitorkin \\
1. Institute of IT and Cybernetics, Samara National Research University, \\
Moskovskoye Shosse 34, Samara 443086, Russia, \\
2. Department of Mechanics and Mathematics, Saratov State University, \\
Astrakhanskaya 83, Saratov 410012, Russia, \\
e-mail: {\it chitorkin.ee@ssau.ru} \\

\noindent Natalia Pavlovna Bondarenko \\
1. Department of Mechanics and Mathematics, Saratov State University, \\
Astrakhanskaya 83, Saratov 410012, Russia, \\
2. Department of Applied Mathematics and Physics, Samara National Research University, \\
Moskovskoye Shosse 34, Samara 443086, Russia, \\
3. Peoples' Friendship University of Russia (RUDN University), \\
6 Miklukho-Maklaya Street, Moscow, 117198, Russia, \\
e-mail: {\it bondarenkonp@info.sgu.ru}


\begin{thebibliography}{99}

\bibitem{Mar77}
Marchenko, V.A. Sturm-Liouville Operators and Their Applications, Naukova Dumka, Kiev (1977) (Russian); English transl., Birkhauser (1986).

\bibitem{Lev84}
Levitan, B.M. Inverse Sturm-Liouville Problems, Nauka, Moscow (1984) (Russian); English transl., VNU Sci. Press, Utrecht (1987).

\bibitem{PT87}
P\"{o}schel, J.; Trubowitz, E. Inverse Spectral Theory, New York, Academic Press (1987).

\bibitem{FY01}
Freiling, G.; Yurko, V. Inverse Sturm-Liouville Problems and Their Applications, Huntington, NY: Nova Science Publishers (2001).

\bibitem{Ful77}
Fulton, C.T. Two-point boundary value problems with eigenvalue parameter contained in the boundary conditions, Proc. R. Soc. Edinburgh, Sect. A. 77 (1977), no.~3--4, 293--308.

\bibitem{Ful80}
Fulton, C.T. Singular eigenvalue problems with eigenvalue parameter contained in the boundary conditions, Proc. R. Soc. Edinburgh, Sect. A. 87 (1980), no.~1--2, 1--34.

\bibitem{Chu01}
Chugunova, M.V. Inverse spectral problem for the Sturm-Liouville operator with eigenvalue parameter dependent boundary conditions. Oper. Theory: Advan. Appl. 123, Birkhauser, Basel (2001), 187--194.

\bibitem{BindBr021}
Binding, P.A.; Browne, P.J.; Watson, B.A. Sturm–Liouville problems with boundary conditions rationally dependent on the eigenparameter. I, Proc. Edinb. Math. Soc. (2) 45 (2002), no. 3, 631--645.

\bibitem{BindBr022}
Binding, P.A.; Browne, P.J.; Watson, B.A. Sturm–Liouville problems with boundary conditions rationally dependent on the eigenparameter. II, J. Comput. Appl. Math. 148 (2002), no. 1, 147--168.

\bibitem{BindBr04}
Binding, P.A.; Browne, P.J.; Watson, B.A. Equivalence of inverse Sturm-Liouville problems with boundary conditions rationally dependent on the eigenparameter, J. Math. Anal. Appl. 291 (2004), 246--261.

\bibitem{ChFr}
Chernozhukova, A.; Freiling, G. A uniqueness theorem for the boundary value problems with non-linear dependence on the spectral parameter in the boundary conditions, Inv. Probl. Sci. Eng. 17 (2009), no. 6, 777--785.

\bibitem{FrYu}
Freiling, G.; Yurko V. Inverse problems for Sturm–Liouville equations with boundary conditions polynomially dependent on the spectral parameter, Inverse Problems 26 (2010), no. 5, 055003.

\bibitem{FrYu12}
Freiling, G.; Yurko, V. Determination of singular differential pencils from the Weyl function, Advances in Dynamical Systems and Applications 7 (2012), no. 2, 171--193.

\bibitem{Wang12}
Wang, Y.P. Uniqueness theorems for Sturm–Liouville operators with boundary conditions polynomially dependent on the eigenparameter from spectral data, Results Math. 63 (2013), 1131--1144.

\bibitem{YangXu15}
Yang, C-F.; Xu, X.-C. Ambarzumyan-type theorem with polynomially dependent eigenparameter, Math. Meth. Appl. Sci. 38 (2015), 4411--4415.

\bibitem{YangWei18}
Yang, Y.; Wei, G. Inverse scattering problems for Sturm–Liouville operators with spectral parameter dependent on boundary conditions, Math. Notes 103 (2018), no.~1--2, 59--66.

\bibitem{Gul19}
Guliyev, N.J. Schr\"odinger operators with distributional potentials and boundary conditions dependent on the eigenvalue parameter, J. Math. Phys. 60 (2019), 063501.

\bibitem{Gul20-ann}
Guliyev, N.J. Essentially isospectral transformations and their applications, Annali di Matematica Pura ed Applicata, 199 (2020), no.~4, 1621--1648.

\bibitem{Gul20-ams}
Guliyev, N.J. On two-spectra inverse problems, Proc. AMS. 148 (2020), 4491--4502.

\bibitem{Gul23}
Guliyev, N.J. Inverse square singularities and eigenparameter-dependent boundary conditions are two sides of the same coin, 
The Quarterly Journal of Mathematics (2023), published online, DOI: https://doi.org/10.1093/qmath/haad004

\bibitem{Chit}
Bondarenko, N.P.; Chitorkin, E.E. Inverse Sturm-Liouville problem with spectral parameter in the boundary conditions, Mathematics 11 (2023), no. 5, Article ID 1138. (19 pp.)

\bibitem{PST23}
Ping, Y.W.; Shieh C.-T.; Tang, Y. The partial inverse spectral problems for a differential operator, Results Math. 78 (2023), Article number: 44.

\bibitem{Yur02}
Yurko, V.A. Method of Spectral Mappings in the Inverse Problem Theory, Inverse and Ill-Posed Problems Series, Utrecht, VNU Science (2002).


\bibitem{SavShkal}
Savchuk, A.M.; Shkalikov, A.A. Sturm-Liouville operators with singular potentials, Math. Notes, 66 (1999), no. 6, 741--753.

\bibitem{SavShkal03}
Savchuk, A.M.; Shkalikov, A.A. Sturm-Liouville operators with distribution potentials, Transl. Moscow Math. Soc. 64 (2003), 143--192.

\bibitem{Hry03}
Hryniv, R.O.; Mykytyuk, Y.V. Inverse spectral problems for Sturm-Liouville operators with singular potentials, Inverse Problems 19 (2003), no.~3, 665--684.

\bibitem{Hry04}
Hryniv, R.O.; Mykytyuk, Y.V. Inverse spectral problems for Sturm-Liouville operators with singular potentials. II. Reconstruction by two spectra, North-Holland Mathematics Studies 197 (2004), 97--114.

\bibitem{Sav05}
Savchuk, A.M.; Shkalikov, A.A. Inverse problem for Sturm-Liouville operators with distribution potentials: reconstruction from two spectra, Russ. J. Math. Phys. 12 (2005), no.~4, 507--514.

\bibitem{Sav08}
Savchuk, A.M.; Shkalikov, A.A. On the properties of maps connected with inverse Sturm-Liouville problems, Proc. Steklov Inst.
Math. 260 (2008), 218--237.

\bibitem{Sav10}
Savchuk, A.M.; Shkalikov, A.A. Inverse problems for Sturm-Liouville operators with potentials in Sobolev spaces: uniform stability, Funct. Anal. Appl. 44 (2010), no.~4, 270--285.

\bibitem{Mirz}
Mirzoev, K.A. Sturm-Liouville operators, Trans. Moscow Math. Soc. 75 (2014), 281--299.

\bibitem{MirzShkal}
Mirzoev, K.A.; Shkalikov, A.A. Differential operators of even order with distribution coefficients, Math. Notes 99 (2016), no. 5, 779--784.

\bibitem{Kon}
Konechnaja, N.N.; Mirzoev, K.A.; Shkalikov, A.A.  On the asymptotic behavior of solutions to two-term differential equations with singular coefficients, Math. Notes 104 (2018), no. 2, 244--252.

\bibitem{Hry}
Hryniv, R.O.; Mykytyuk, Y.V. Half-inverse spectral problems for Sturm-Liouville operators with singular potentials, Inverse problems 20 (2004), no. 5, 1423--1444.

\bibitem{FrIgYu}
Freiling, G.; Ignatiev, M.Y.; Yurko, V.A. An inverse spectral problem for Sturm-Liouville operators with singular potentials on star-type graph, Proc. Symp. Pure Math. 77 (2008), 397--408.

\bibitem{Dj}
Djakov, P.; Mityagin, B.N. Spectral gap asymptotics of one-dimensional Schr\"odinger operators with singular periodic potentials, Integral Transforms Spec. Funct. 20 (2009), no. 3--4, 265--273.

\bibitem{BondTamkang}
Bondarenko, N.P. Solving an inverse problem for the Sturm-Liouville operator with a singular potential by Yurko’s method, Tamkang Journal of Mathematics (2021), 52(1), 125–154.

\bibitem{Alb05}
Albeverio, S.; Gesztesy, F.; Hoegh-Krohn, R.; Holden, H. Solvable Models in Quantum Mechanics, 2nd ed., AMS Chelsea Publishing, Providnce, RI, 2005.

\bibitem{Hry12}
Hryniv, R.; Pronska, N. Inverse spectral problems for energy-dependent Sturm-Liouville equations, Inverse Problems 28 (2012), 085008 (21 pp).

\bibitem{Pr13}
Pronska, N. Reconstruction of energy-dependent Sturm-Liouville equations from two spectra, Integral Equations and Operator Theory 76 (2013), 403--419.

\bibitem{Hry20}
Hryniv, R.O.; Manko, S.S. Inverse scattering on the half-line for energy-dependent Schr\"odinger equations, Inverse Problems 36 (2020), no.~9, 095002.

\bibitem{BondGaidel}
Bondarenko N.P., Gaidel A.V. Solvability and stability of the inverse problem for the quadratic differential pencil, Mathematics 9 (2021), no. 20, Article ID 2617.

\bibitem{Kuz23}
Kuznetsova, M.A. On recovering quadratic pencils with singular coefficients and entire functions in the boundary conditions, Math. Meth. Appl. Sci. 46 (2023), no.~5, 5086--5098.

\bibitem{Myk09}
Mykytyuk, Y.V.; Trush, N.S. Inverse spectral problems for Sturm-Liouville operators with matrix-valued potentials, Inverse Problems 26 (2009), no.~1, 015009.

\bibitem{Eck15}
Eckhardt, J.; Gesztesy, F.; Nichols, R.; Sakhnovich, A.; Teschl, G. Inverse spectral problems for Schr\"odinger-type operators with distributional matrix-valued potentials, Differential Integral Equations 28 (2015), no.~5/6, 505--522.

\bibitem{BondAMP21}
Bondarenko, N.P. Inverse problem solution and spectral data characterization for the matrix Sturm-Liouville operator with singular potential, Anal. Math. Phys. 11 (2021), Article number: 145.

\bibitem{But07}
Buterin, S.A. On inverse spectral problem for non-selfadjoint Sturm-Liouville operator on a finite interval, J. Math. Anal. Appl. 335 (2007), no. 1, 739--749.

\bibitem{But13}
Buterin, S.A.; Shieh, C.-T.; Yurko, V.A. Inverse spectral problems for non-selfadjoint second-order differential operators with Dirichlet boundary conditions, Boundary Value Problems (2013), 2013:180.

\bibitem{IgYu}
Ignatiev, M.; Yurko, V. Numerical methods for solving inverse Sturm-Liouville problems, Results Math. 52 (2008), no.~1--2, 63--74.

\bibitem{Ru92}
Rundell, W.; Sacks, E. Reconstruction techniques for classical inverse Sturm-Liouville problems, Math. Comput. 58 (1992), no.~197, 161--183.

\bibitem{Krav20}
Kravchenko, V.V. Direct and Inverse Sturm-Liouville Problems, Birkh\"auser, Cham (2020).

\bibitem{BB20}
Bondarenko, N.; Buterin, S. Numerical solution and stability of the inverse spectral problem for a convolution integro-differential operator, Commun. Nonlinear Sci. Numer. Simulat. 89 (2020), 105298.

\bibitem{But23}
Buterin, S. Functional-differential operators on geometrical graphs with global delay and inverse spectral problems, Results Math. 78 (2023), no.~3, Article number: 79.

\bibitem{WKS23}
Wang, Y.P.; Keskin, B.;  Shieh, C.-T. A partial inverse problem for non-self-adjoint
Sturm-Liouville operators with a constant delay, J. Inverse Ill-Posed Probl. (2023), published online, DOI: https://doi.org/10.1515/jiip-2020-0058.

\end{thebibliography}
\end{document}